\documentclass{amsart}
\usepackage[letterpaper, margin=1in]{geometry}

\usepackage{amsmath,amssymb,amsthm}
\usepackage{arydshln} 
\usepackage{dsfont}
\usepackage{enumitem}
\usepackage{graphicx}
\usepackage{hyperref}
\hypersetup{
    colorlinks,
    linkcolor={red!60!black},
    citecolor={blue!60!black},
    urlcolor={green!60!black}
}
\usepackage{cleveref}
\usepackage{mathabx} 
\usepackage{mathrsfs} 
\usepackage{mdframed} 
\usepackage{multicol,multirow}
\usepackage{subcaption}
\usepackage{soul}
\usepackage[svgnames]{xcolor} 

\usepackage{accents} 

\newcommand{\norm}[1]{\left\lVert#1\right\rVert}
\newcommand{\lp}{\left(}
\newcommand{\rp}{\right)}
\newcommand{\ls}{\left[}
\newcommand{\rs}{\right]}
\newcommand{\lv}{\left\vert}
\newcommand{\rv}{\right\vert}
\newcommand{\lc}{\left\{}
\newcommand{\rc}{\right\}}

\newcommand{\HS}{\mathbb{H}^3} 
\newcommand{\R}{\mathbb{R}} 
\newcommand{\Z}{\mathbb{Z}} 

\newcommand{\A}{\mathcal{A}} 
\newcommand{\Q}{\mathcal{Q}} 
\newcommand{\J}{\mathcal{J}} 
\newcommand{\C}{\mathcal{C}} 

\newcommand{\Mend}{M_\mathrm{end}}
\newcommand{\ext}[1]{#1_\mathrm{ext}} 

\newcommand{\til}{\widetilde}
\newcommand{\I}{\mathcal{I}} 
\newcommand{\gt}{\til g}
\newcommand{\Et}{\til\E}
\newcommand{\gradt}{\til\grad}

\newcommand{\ax}{\breve}

\newcommand{\G}{\mathcal{G}} 
\newcommand{\V}{\mathcal{V}} 
\newcommand{\tPhi}{\tilde\Phi} 
\newcommand{\tOmega}{\til\Omega} 
\newcommand{\tSigma}{\til\Sigma} 
\newcommand{\reg}[2]{#1_{\mathrm{reg}(#2)}} 
\newcommand{\bun}[1]{\underaccent{\bar}#1} 

\newcommand{\grad}{\nabla}
\newcommand{\del}{\partial}
\DeclareMathOperator{\Div}{div}
\DeclareMathOperator{\tr}{tr}
\DeclareMathOperator{\Ric}{Ric}

\DeclareMathOperator{\Vol}{\mathrm{Vol}}

\newcommand{\charge}{\widehat}
\newcommand{\E}{\mathcal{E}} 
\newcommand{\gc}{\charge g} 
\newcommand{\gradc}{\charge\nabla} 
\newcommand{\Deltac}{\charge\Delta} 

\newcommand{\jang}{\overline}
\newcommand{\End}{\mathcal{N}}
\newcommand{\gradj}{\jang\grad}
\newcommand{\qj}{\jang q}
\newcommand{\Mj}{\jang M}
\newcommand{\gj}{\jang g}
\newcommand{\Ej}{\jang \E}
\newcommand{\Qj}{\jang \Q}
\newcommand{\Aj}{\jang A}
\newcommand{\Vj}{\jang V}

\newtheorem{theorem}{Theorem}[section]
\newtheorem{lemma}{Lemma}[section]
\newtheorem{proposition}{Proposition}[section]
\newtheorem{corollary}{Corollary}[lemma]
\newtheorem{remark}{Remark}

\usepackage[
    backend=biber,
    style=alphabetic,
    sorting=nyt
]{biblatex}
\allowdisplaybreaks 
\numberwithin{equation}{section}

\usepackage{microtype}

\title{The suboptimal Penrose inequality for charged initial data sets}
\author{Eunice Ng}
\date{}

\begin{document}

\begin{abstract}
    Given a complete, 3-dimensional, asymptotically flat initial data set for the Einstein-Maxwell equations with vanishing magnetic field, we show that there exists a small universal constant $\C$ such that $m\geq\mathcal{C}(\sqrt{\mathcal{A}/(16\pi)}+\mathcal{Q}^2\sqrt{\pi/\mathcal{A}}\,)$. An analogous statement holds for asymptotically hyperboloidal initial data sets with the ADM mass replaced by hyperbolic energy. Under the additional assumption of axisymmetry, the inequality may be further strengthened to include angular momentum. These results extend the Penrose-type inequalities recently obtained by Allen-Bryden-Kazaras-Khuri to the charged setting.
\end{abstract}

\maketitle

\setcounter{tocdepth}{3}

\section{Introduction}\label{sec:intro}

A \emph{charged initial data set} $(M, g, k,\E)$ consists of a 3-dimensional connected and orientable manifold $M$, a complete Riemannian metric $g$, a symmetric 2-tensor $k$, and a vector field $\E$. These quantities are assumed to be smooth and to satisfy the Einstein-Maxwell constraint equations
\begin{equation}
    \begin{aligned}
        16\pi\mu_E &= R_g  + (\tr_g k)^2 - |k|_g^2 - 2|\E|_g^2, \\ 
        8\pi J &= \Div_g(k-(\tr_gk)g), \\
        \Div_g\E &= 0, 
    \end{aligned}
\end{equation}
where $(\mu_E,J)$ is the energy-momentum density of the matter fields and $R_g$ denotes the scalar curvature of $g$. The \emph{charged dominant energy condition} holds if $\mu_E \geq|J|_g$. Furthermore, the initial data is \emph{asymptotically flat with $\ell_0$ ends} if there is a compact set $K$ such that $M\,\setminus\, K = \smash{\bigsqcup_{\ell=1}^{\ell_0} \Mend^\ell}$, where each end $\Mend^\ell$ is diffeomorphic to the complement of a Euclidean 3-ball and admit coordinates $x^i$ such that
\begin{equation}\label{cond:AF_asymptotics}
    \begin{gathered}
        g_{ij} = \delta_{ij} +O_2(|x|^{-q}), \qquad k_{ij} = O_1(|x|^{-2-q}), \qquad \tr_g k = O(|x|^{-1-2q}), \\
        \mu_E, J^i = O(|x|^{-2-2q}), \qquad \E^i = O_1(|x|^{-2}),
    \end{gathered}
\end{equation}
where $q>\frac{1}{2}$ and $O_l(|x|^{-q})$ represents a tensor in the weighted space $C^l_{-q}(\R^3)$. The decay of $\tr_g k$ is not typically assumed but shall be used in our analysis of the Jang equation.

An initial data set is \emph{axisymmetric} if the isometry group of $(M,g)$ contains a $U(1)$ subgroup and all other quantities defining the data are invariant under the $U(1)$ action. That is, if $\eta$ is the Killing field generating the $U(1)$ symmetry, then
\begin{align}
    \mathcal{L}_\eta g = \mathcal{L}_\eta k = \mathcal{L}_\eta\E =0,
\end{align}
where $\mathcal{L}$ denotes Lie differentiation. The decay in \eqref{cond:AF_asymptotics} guarantees that the \emph{ADM energy}, \emph{ADM linear momentum}, \emph{total charge}, and \emph{ADM angular momentum} of each end are well-defined by the limits
\begin{align}
    E_\text{ADM} &= \lim_{r\to\infty} \frac{1}{16\pi} \int_{S_r} \sum_{i=1}^3 (g_{ij,i}-g_{ii,j})\nu^j \, dA, \qquad 
    && P_i = \lim_{r\to\infty} \frac{1}{8\pi} \int_{S_r} (k_{ij}-(\tr_g k)g_{ij})\nu^j \, dA \label{def:ADMenergy} \\
    \Q &= \lim_{r\to\infty} \frac{1}{4\pi} \int_{S_r} \E_i \nu^i \,dA, \qquad
    && \mathcal{J} = \lim_{r\to\infty} \frac{1}{8\pi} \int_{S_r} (k_{ij}-(\tr_gk)g_{ij})\eta^i \nu^j \,dA,\label{def:chargeangularmomentum}
\end{align}
where $S_r$ are coordinate spheres with unit outer normal $\nu$ and area element $dA$. If the initial data is axisymmetric, we shall further assume that the momentum density satisfies
\begin{align}\label{cond:conservationofangularmomentum}
    J^i\eta_i = 0,
\end{align}
which guarantees the integral defining $\J$ only depends on the homology class of the sphere. For simply connected manifolds, \eqref{cond:conservationofangularmomentum} is equivalent to the existence of a twist potential \cite[Lemma 2.2]{dain2013lowerboundsareablack}. Lastly, the \emph{ADM mass} is given by the Lorentz length of the energy-momentum vector.
\begin{align}
    m = \sqrt{{E_\mathrm{ADM}^2-|P|^2}}
\end{align}
Note that the decay of $k$ in \eqref{cond:AF_asymptotics} forces linear momentum to vanish, so the ADM mass and energy coincide.

A heuristic argument by Penrose conjectures a sharp relationship between the mass enclosed by a black hole and the cross-sectional area $\A_e$ of its event horizon,
\begin{align}\label{conj:penrose}
    m^2 \geq \left(\sqrt{\frac{\A_e}{16\pi}} + \Q^2\sqrt{\frac{\pi}{\A_e}}\right)^2 + \frac{4\pi\J^2}{\A_e}.
\end{align}The conjecture asserts that equality is only satisfied when the initial data isometrically embeds as a hypersurface of the Kerr-Newman spacetime. In what follows, we shall assume that the area satisfies $\A\geq 4\pi\sqrt{\Q^4+4\J^2}$, since \eqref{conj:penrose} otherwise reduces to the positive mass bound $m^2 \geq \frac{1}{2}(\Q^2+\sqrt{\Q^4+4\J^2})$.

In the initial data set formulation of the Penrose inequality, the event horizon is represented by an apparent horizon in the distinguished end $\Mend$. Let $\Sigma\subset M$ be a closed, 2-sided hypersurface, and let $\nu$ denote the unit normal pointing toward $\Mend$. Its \emph{null expansions} are $\theta_\pm = H\pm\tr_\Sigma k$, where $H$ is the mean curvature of $\Sigma$ with respect to $\nu$. Equivalently, $\theta_\pm$ are the spacetime mean curvatures of $\Sigma$ in the null directions $\nu\pm n$, where $n$ is the future-directed timelike unit normal to the spacelike slice $(M,g,k)$. Thus $\theta_\pm$ measures the area variation of a light shell emitted outward from $\Sigma$ toward the future or past, respectively. A surface satisfying $\theta_+<0$ or $\theta_-<0$ is said to be \emph{outer} or \emph{inner trapped} and the occurrence of such surfaces is associated with strong gravitational fields. The limiting cases $\theta_+=0$ and $\theta_-=0$ define a \emph{marginally outer trapped surface} (MOTS) and a \emph{marginally inner trapped surface} (MITS), respectively. We shall refer to a union $\Sigma$ of disjoint MOTS and MITS components as an \emph{outermost apparent horizon} relative to $\Mend$ if no other apparent horizon in $\Mend$ encloses $\Sigma$. The existence of an outermost apparent horizon under the relevant trapped-region hypotheses follows from the construction of \cite{anderssonmetzger2009areahorizonstrappedregion}.

The Penrose inequality \eqref{conj:penrose} with $\Q=\J=0$ has been the subject of extensive study. In the time-symmetric setting ($k=0$), the inequality was proved for a connected black hole by \cite{huiskenilmanen2001inversemeancurvatureflow} via inverse mean curvature flow (IMCF). The multiple black hole setting was treated by \cite{bray1999proofriemannianpenroseconjecture} using a conformal flow and has since been generalized by \cite{braylee2009riemannianpenroseinequalitydimensions} for dimensions up to 7. Considerably less is known in the spacetime regime where $k$ is non-vanishing in general. Under spherical symmetry, the spacetime inequality was established by \cite{hayward1996gravitationalenergysphericalsymmetry,malecmurchadha1994trappedsurfacespenroseinequality} with rigidity obtained in \cite{bryden2021stabilityspacetimepositivemass,bray2009jangequationapproachpenrose}. The inequality has also been shown in the cohomogeneity one setting by \cite{khuri2024spacetimepenroseinequalitycohomogeneity} and for 2-convex initial data sets via $\mathbf{P}$-IMCF by \cite{dong2026penroseconjectureinitialdata}, the former result extending to higher dimensions. Recently, \cite{allen2025proofspacetimepenroseinequality} showed that there is a universal constant $C\ll 1$ such that $m\geq C\sqrt{\A}$. Their argument combines the Jang deformation on initial data sets with the harmonic level set techniques of \cite{bray2019harmonicfunctionsmass3dimensional} and the Poincar\'e-Faber-Szeg\H{o} inequality from \cite{dong2025stabilityriemannianpenroseinequality}.

In the charged setting, the time-symmetric case with multiple black hole components was confirmed by \cite{khuri2014riemannianpenroseinequalitycharge} using conformal flow. Assuming axisymmetry and maximality ($\tr_g k = 0$), \cite{anglada2018} obtains a Penrose-type inequality with angular momentum provided that a smooth IMCF exists starting from a connected minimal apparent horizon. Also in the maximal setting, \cite{dain2013lowerboundsareablack} derives a partial Penrose inequality for a connected apparent horizon. A different variational approach appears in \cite{khuri2019penrosetypeinequalityangularmomentum}, where a charged Penrose-type inequality with angular momentum is obtained by exhibiting the Kerr-Newman black hole as a minimizer for a suitable reduced harmonic energy. Recently, \cite{dolmen2026penroseinequalitycharge2convex} extended the work of \cite{dong2026penroseconjectureinitialdata} to prove the charged Penrose inequality for 2-convex initial data sets by modifying the monotone Hawking-type quantity to include electromagnetic charge.

The goal of the present article is to show that \eqref{conj:penrose} holds with a universal suboptimal constant, thereby generalizing the results of \cite{allen2025proofspacetimepenroseinequality} to the setting of charge and angular momentum.

\begin{theorem}\label{thm:main_AF}
    Let $(M,g,k,\E)$ be a complete, charged, axisymmetric, and asymptotically flat initial data set satisfying the charged dominant energy condition. Suppose the axial Killing field $\eta$ satisfies the twist-free condition $\eta^\flat\wedge d\eta^\flat=0$. Then
    \begin{align}\label{eqn:main_AF}
        m \geq \C\ls\lp\sqrt{\frac{\A}{16\pi}}+\Q^2\sqrt{\frac{\pi}{\A}}\rp^2 + \frac{\mathcal{J}^2}{\mathscr{R}_\eta^2} \rs^{1/2}
    \end{align}
    holds with $\C=3.3701 \times10^{-4}$, where the following quantities are defined with respect to an end $\Mend$:
    \begin{itemize}
        \item $m,\Q,\J$ are respectively the ADM mass, total charge, and ADM angular momentum,
        \item $\A$ is the minimal area required to enclose an outermost apparent horizon,
        \item $\mathscr{R}_\eta^2=\mathcal{R}_\eta(S_{R_*})^3/\sqrt{\A}$, where $R_\eta$ is defined in \eqref{def:radius} and $R_*>0$ is given by \Cref{prop:masschargeangularmomentum}.
    \end{itemize}
\end{theorem}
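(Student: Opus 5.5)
The argument follows Allen--Bryden--Kazaras--Khuri: a Jang deformation reduces the spacetime problem to a Riemannian one with an almost nonnegative scalar curvature; harmonic level sets then bound the mass below by an integral that can be split into an area part and a charge/angular-momentum part.

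\emph{Step 1 (Jang deformation).} Solve the Jang equation on $(M,g,k)$ with blow-up and blow-down along the outermost apparent horizon; this is where the decay $\tr_g k=O(|x|^{-1-2q})$ is used. The Jang graph $(\Mj,\gj)$, $\gj=g+df\otimes df$, is asymptotically flat with the same ADM mass in $\Mend$. Its boundary region, after capping or cylindrical ends, has minimal enclosing area at least $\A$.

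Twist-freeness of $\eta$ together with $J(\eta)=0$ should ensure that $f$ can be chosen $U(1)$-invariant. The scalar curvature then satisfies the Schoen--Yau identity with the charged dominant energy condition:
\begin{align*}
R_{\gj}\geq 16\pi(\mu_E-|J|)+2|\E|_g^2+|h-k|^2+2|X|^2-2\Div_{\gj}X\geq 2|\Ej|_{\gj}^2+|\bar q|^2-2\Div_{\gj}X.
\end{align*}
Here $\Ej$ is the pulled-back electric field. It is divergence-free for $\gj$ up to a controlled correction, or one replaces it by the projected field, which keeps the flux $4\pi\Q$. The term $\bar q$ must contain the $k(\eta,\cdot)$ components that produce $\J$.

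\emph{Step 2 (harmonic level set mass formula).} On the exterior of the horizon region, take the $\gj$-harmonic function $u$ that is asymptotic to $|x|$ (or a spacetime-harmonic variant) with Neumann or Dirichlet data on the horizon. The Bray--Kazaras--Khuri--Stern integral formula, with the $X$-divergence handled as in ABKK through $|\nabla u|$-weighted Cauchy--Schwarz, gives
\begin{align*}
m\geq \frac{1}{16\pi}\int_{\Mj}\left(\frac{|\nabla^2u|^2}{|\nabla u|}+2|\Ej|^2|\nabla u|+|\bar q|^2|\nabla u|\right)dV-\text{(boundary/Euler characteristic terms)}.
\end{align*}
The area piece $\C\sqrt{\A/16\pi}$ comes from exactly the ABKK chain: bound the boundary term with the Poincar\'e--Faber--Szeg\H{o} inequality and choose a level-set radius $R_*$ with $\A\sim R_*^2$. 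I will treat this as Proposition-level input and reuse it with the charged integrand simply dropped.

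\emph{Step 3 (charge and angular momentum; the main obstacle).} Prop.~\ref{prop:masschargeangularmomentum} should bound the extra integrals below on the region between the horizon and $S_{R_*}$. By coarea,
\begin{align*}
\int 2|\Ej|^2|\nabla u|\,dV=\int_t\int_{\{u=t\}}2|\Ej|^2\,dA\,dt.
\end{align*}
On each level set, Cauchy--Schwarz and flux conservation give $\int|\Ej|^2\geq (4\pi\Q)^2/|\{u=t\}|$. Integrating over $t$ up to the radius where level-set areas are comparable to $\A$ yields a lower bound of the form $\Q^2\sqrt{\pi/\A}$.

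Similarly, $\int_{\{u=t\}}|\bar q(\eta,\nu)|\geq 8\pi|\J|$, by homological invariance from $J(\eta)=0$. Cauchy--Schwarz against $|\eta|^2$ then bounds $\int|\bar q|^2$ below by $\J^2/\mathcal{R}_\eta(S_t)^3$-type quantities. This is where $\mathscr{R}_\eta$ enters, after normalizing by $\sqrt{\A}$.

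The delicate points are these. First, uniform control of level-set areas of $u$ between the horizon and $S_{R_*}$ is needed; this uses the ABKK area-growth estimates. Second, when the Jang graph is used in place of $g$, the field $\Ej$ must genuinely carry flux $\Q$; I would try defining it as $\E$ pushed along the graph projection and check the divergence-free property there.

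\emph{Step 4 (combining).} Each route gives a lower bound $m\geq c_1\sqrt{\A/16\pi}$, $m\geq c_2\Q^2\sqrt{\pi/\A}-(\ldots)$, or $m\geq c_3|\J|/\mathscr{R}_\eta$. Averaging these with convex weights and using $a+b+c\geq(a^2+b^2+c^2)^{1/2}$ gives the form \eqref{eqn:main_AF}. The numerical value $\C=3.3701\times10^{-4}$ follows from optimizing these weights together with the ABKK constants.
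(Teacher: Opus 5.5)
Your proposal has a genuine gap at the charge term: the mechanism in Step 3 does not work in this framework. The Bray--Kazaras--Khuri--Stern identity used by ABKK and by the paper applies to functions of linear growth, asymptotic to $a_ix^i$. (Harmonic functions of linear growth cannot be asymptotic to $|x|$.) So the level sets $\{u=t\}$ are noncompact, asymptotically planar, and of infinite area. They are not homologous to the horizon, so neither ``flux $=4\pi\Q$'' nor $\int_{\{u=t\}}|\Ej|^2\geq(4\pi\Q)^2/|\{u=t\}|$ yields anything. Even with closed level sets, you would need an \emph{upper} bound on the areas of a whole family of level sets in terms of $\A$. But $\A$ only bounds enclosing surfaces from below, and the ABKK estimates do not supply this; their area bound concerns a single level set of a defect function.

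The missing idea is a mass--charge gap obtained from \emph{charged} harmonic functions:
\begin{enumerate}
    \item Lift $\E$ by the Disconzi--Khuri field $\Ej$, which is exactly divergence free with $|\Ej|_{\gj}\leq|\E|_g$ and $\Qj=\Q$.
    \item Solve $\Delta u^i=\langle\Ej,\gradj u^i\rangle$ with $u^i\to x^i$.
    \item Use the charged Hessian $\gradc^2u$ (a Weyl connection) and the charged Kato inequality $\bigl|\gradj|\gradj u|+|\gradj u|\Ej\bigr|^2\leq\frac23|\gradc^2u|^2$.
\end{enumerate}
The flux of $|\gradj u|\Ej$ at infinity then enters the integral identity and gives $E-(1-\frac{\varepsilon}{2})|\Q|\geq\frac{1}{16\pi}\int_{\Mj}(\cdots)$. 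This is a bound \emph{linear} in $|\Q|$. The term $\Q^2\sqrt{\pi/\A}$ appears only at the end, via $\frac{|\Q|}{2}\geq\Q^2\sqrt{\pi/\A}$. That step uses the standing hypothesis $\A\geq4\pi\sqrt{\Q^4+4\J^2}$, which your proposal never invokes. Without it, nothing controls a term that blows up as $\A\to0$.

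Your angular momentum step has the same level-set problem. The paper instead works with the axisymmetric coordinate spheres $S_{\ax r}$ of the symmetry-adapted chart (\Cref{lem:axisymmetriccoordinates}). It applies the coarea formula in $\bar r$ and the comparison $\sum_i|\gradj u^i|\geq\frac12|\gradj\bar r|$ on $\{\bar r\geq R_*\}$. That is what $R_*$ is: it is fixed by the decay constants of $\gj$ and $du^i$, and has nothing to do with $\A\sim R_*^2$. Twist-freeness is also not what makes $f$ invariant; the Jang regularization already preserves the symmetry. It is used to get $h(\eta,\nu)=0$, so that $k(\eta,\nu)=(k-h)(\eta,\nu)$ and $(h-k)(\eta,\nu)^2\leq|\eta|_g^2(|h-k|^2+|X|^2)$. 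Cauchy--Schwarz against $|\eta|^2$ then gives $8\pi\J^2/\mathcal{R}_\eta(S_R)^3$.

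Finally, the ABKK chain gives a constant of order $10^{-8}$. So the stated $\C=3.3701\times10^{-4}$ cannot come from optimizing weights with the ABKK constants. It requires the paper's new area estimate:
\begin{itemize}
    \item the singular-value defect $\Phi$,
    \item Jacobian bounds and an isoperimetric inequality for its sublevel sets,
    \item a P\'olya--Szeg\H{o} inequality for rearrangements,
    \item the linear bound on $\gamma$.
\end{itemize}
That estimate also cannot be used with the charged integrand dropped. $\gradj\Phi$ contains a term proportional to $\Ej$, which is why $|\gradj u^i||\Ej|^2$ must be kept on the right-hand side.
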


\begin{remark}
    This improves the inequality $m\geq\C\smash{\sqrt{\A/(16\pi)}}$ obtained by \cite{allen2025proofspacetimepenroseinequality}, where $\C=1.7502\times10^{-8}$. The primary sources of this improvement are elementary refinements to the estimates of \Cref{sec:area}. An additional improvement is found by replacing the capacity-volume argument with a Poincar\'e-Szeg\H{o} inequality for function rearrangements.
\end{remark}

\begin{remark}
    The axisymmetry and twist-free assumptions may be omitted to obtain the inequality with $\J=0$.
\end{remark}

The inequality \eqref{conj:penrose} also naturally arises in the setting of asymptotically hyperboloidal initial data sets. Geometrically, such initial data describe spacelike hypersurfaces that asymptote to the null cone at infinity and thus intersect null infinity along the ends. The asymptotic model in this setting is hyperbolic $3$-space $(\HS,b)$, realized as the hyperboloid $t=\sqrt{1+|x|^2}$ in Minkowski space. In polar coordinates, $\HS=[0,\infty)\times S^2$ and $b=(1+r^2)^{-1}dr^2 + r^2\sigma$, where $\sigma$ is the unit round metric on the 2-sphere. We shall say an initial data set is \emph{asymptotically hyperboloidal} if there is a compact set $K$ such that $\smash{M\setminus K=\bigsqcup_{\ell=1}^{\ell_0} M^\ell_\text{end}}$ where each end $\Mend^\ell$ is diffeomorphic to the complement of a hyperbolic 3-ball and admit coordinates such that
\begin{align}
    \begin{gathered}\label{cond:AH_asymptotics}
        g = b +\frac{\mathbf{m}}{r}+O_6(r^{-2}), \qquad 
        k = b + \frac{\mathbf{p}}{r} + O_4(r^{-2}), \\ 
        \mu_E,J = O_3(r^{-3-q_0}), \qquad 
        \E = O_1(r^{-2}),
    \end{gathered}
\end{align}
where $q_0>0$, $\mathbf{m}$ and $\mathbf{p}$ are symmetric 2-tensors on $S^2$, and $O_l(r^{-q})$ represents a tensor in the weighted space $C^l_{-q}(\HS)$. The expansions in \eqref{cond:AH_asymptotics} are known as \emph{Wang asymptotics} \cite{wang2001massasymptoticallyhyperbolicmanifolds}. As stated, the order of decay and differentiability is stronger than those typically imposed, but are sufficient for the Jang equation analysis of \cite{sakovich2021jangequationpositivemass}. Assuming these asymptotic, the \emph{mass-aspect function} $\tr_\sigma(\mathbf{m}+2\mathbf{p})$ gives rise to a well-defined \emph{total energy} and \emph{linear momentum}, given by
\begin{align}\label{def:hyperbolicenergy}
    E_\text{hyp} = \frac{1}{16\pi} \int_{S^2} \tr_\sigma(\mathbf{m}+2\mathbf{p}) \, dA_\sigma, \qquad
    (P_\mathrm{hyp})_i = \frac{1}{16\pi} \int_{S^2} x^i\tr_\sigma(\mathbf{m}+2\mathbf{p}) \, dA_\sigma,
\end{align}
where $x^1,x^2,x^3$ are the coordinate functions on $\R^3$ restricted to $S^2$. The \emph{total mass} is the Minkowskian length of the energy-momentum vector. 
\begin{align}
    m=\sqrt{E_\mathrm{hyp}^2-|P_\mathrm{hyp}|^2}
\end{align}
The total charge and ADM angular momentum are given as before. By building on the suboptimal-constant framework of \cite{allen2025proofspacetimepenroseinequality}, we are able to establish a corresponding spacetime hyperboloidal Penrose inequality with charge and angular momentum.

\begin{theorem}\label{thm:main_AH}
    Let $(M,g,k,\E)$ be a complete, charged, axisymmetric, and asymptotically hyperboloidal initial data set satisfying the charged dominant energy condition. Suppose the axial Killing field $\eta$ satisfies the twist-free condition $\eta^\flat\wedge d\eta^\flat=0$. Then
    \begin{align}
        E_\text{hyp} \geq \C\ls\lp\sqrt{\frac{\A}{16\pi}}+\Q^2\sqrt{\frac{\pi}{\A}}\rp^2 + \frac{\mathcal{J}^2}{\mathscr{R}_\eta^2} \rs^{1/2}
    \end{align}
    holds with $\C = 3.3701 \times10^{-4}$, where $E_\text{hyp}$ is the total energy of an end. All other quantities are as defined in \Cref{thm:main_AF}.
\end{theorem}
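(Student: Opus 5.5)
The plan is to reduce \Cref{thm:main_AH} to the asymptotically flat argument behind \Cref{thm:main_AF}, replacing the Jang deformation step by its hyperboloidal analogue from \cite{sakovich2021jangequationpositivemass}. First, solve the (charged) Jang equation $\tr_g k$-type boundary value problem with the asymptotics of \cite{sakovich2021jangequationpositivemass}. The Wang asymptotics \eqref{cond:AH_asymptotics}, with $k=b+\dots$, are chosen so that a solution $f$ exists with $f\sim\sqrt{1+r^2}+\alpha\log r+\dots$ near infinity. This ensures that the Jang graph $(\Mj,\gj)$ is asymptotically flat, and its ADM energy equals (or is bounded above by) $E_\mathrm{hyp}$. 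The solution blows up at the outermost apparent horizon, producing cylindrical ends, which we cap or treat as in the AF case.

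Next, transfer the electromagnetic data. We push $\E$ to $\Ej$ on the Jang graph exactly as in the AF section. The construction keeps $\Div_{\gj}\Ej=0$ in the appropriate weighted sense and has the same flux, so the total charge $\Q$ is unchanged. The scalar curvature identity then gives
\[
\bar R\ge 2|\Ej|^2_{\gj}+2|q|^2-2\,\Div_{\gj}(q),
\]
with the usual vector field $q$ decaying suitably. This is the input required by the subsequent harmonic level set / Poincar\'e--Szeg\H{o} arguments.

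Under axisymmetry and the twist-free condition, the Jang graph remains axisymmetric and the angular momentum $\J$ is recovered through the twist potential. This works exactly as in \Cref{prop:masschargeangularmomentum}, which I assume was proved on the Jang deformed manifold for any asymptotically flat Jang graph with these structural properties. From there the mass lower bound in terms of area, charge and $\J/\mathscr{R}_\eta$ follows verbatim with the same constant $\C$.

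The main obstacle is the first step: showing the hyperboloidal Jang graph's ADM energy is controlled by $E_\mathrm{hyp}$ while preserving the decay of $q$ and $\Ej$ needed for the boundary terms at infinity. The $\log r$ term and the $O(r^{-2})$ decay of $\E$ in hyperbolic coordinates become slower in the flat coordinates of $\Mj$. I would first check that the flux integrals defining $\Q$ and $\J$ converge on large coordinate spheres of $\Mj$. I would then verify that the charge-dependent boundary terms vanish in the limit, using the estimates of \cite{sakovich2021jangequationpositivemass} for $f$ and its derivatives.
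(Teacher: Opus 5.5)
Your overall architecture (charged Jang reduction, lifting $\E$ to $\Ej$, charged harmonic functions, then the area estimate) matches the paper's. However, the two ingredients that are specific to the hyperboloidal case are either handled incorrectly or omitted.

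\textbf{Energy bookkeeping.} The hyperboloidal Jang graph does not have ADM energy at most $E_{\mathrm{hyp}}$. By \Cref{prop:jang} (row (5) of \Cref{table:jang}), $E_{\mathrm{ADM}}(\End_0)=2E_{\mathrm{hyp}}$. The boundary terms produced by your vector field $q$ (the paper's $X$) also do not vanish at infinity. One only has $|X|_{\gj}=O(r^{-2})$, and $\int_{S_r}\langle X,\nu\rangle_{\gj}\,d\Aj\to-4\pi E_{\mathrm{ADM}}(\End_0)\neq0$. So the ``main obstacle'' you set yourself consists of two false claims whose errors happen to cancel numerically. The paper keeps this flux instead (second half of the proof of \Cref{thm:masscharge}). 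Using only the integrability of $R_{\gj}-2|\E|^2+2\Div_{\gj}X$, it integrates by parts between the cylinder $C_\rho$ and the sphere $S_{2\rho}$. The left-hand side becomes the flux of the ADM integrand plus that of $2X$, which tends to $16\pi\jang E-8\pi\jang E$. This is the origin of the factor $\jang C=\tfrac12$: it is $\tfrac12\jang E$, not $\jang E$, that is an energy of the original data.

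\textbf{Adapted chart.} You never produce a chart in which the angular-momentum step can be run.
\begin{itemize}
\item \Cref{lem:geometricangularmomentum} needs coordinate spheres $S_{\ax r}$ invariant under the $U(1)$ action. Then $\nu\perp\eta$, and the twist-free condition forces $h(\eta,\nu)=0$; this, not a twist potential, is how $\J$ enters.
\item It then uses the pointwise bound $(h-k)(\eta,\nu)^2\le|\eta|_g^2\,(|h-k|_{\gj}^2+|X|_{\gj}^2)$. This needs the term $|h-k|_{\gj}^2$, which your displayed inequality $\bar R\ge2|\Ej|^2+2|q|^2-2\Div_{\gj}q$ discards; $X$ only sees $h-k$ contracted with $\grad f$.
\item In the hyperboloidal setting the spheres of a given Wang chart need not be axisymmetric. \Cref{lem:axisymmetriccoordinates} extends the action to conformal infinity, conjugates it into rotations via Cartan's fixed point theorem, and rebuilds a Wang chart from a geodesic defining function.
\item This changes the chart by an isometry of $\HS$, which transforms $(E_{\mathrm{hyp}},P_{\mathrm{hyp}})$ by a Lorentz transformation and can increase the energy.
\item The paper restores control by boosting to the rest frame, obtaining $\ax E=m\le E_{\mathrm{hyp}}$. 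This requires the energy-momentum vector to be strictly timelike. That is proved from the rigidity case of the hyperboloidal positive mass theorem together with the nonexistence of bounding MOTS in Minkowski space.
\end{itemize}
Without this step your final inequality would bound the energy in some other chart rather than $E_{\mathrm{hyp}}$. Once both points are supplied, the rest goes through verbatim, as you say: $\jang C\jang E=\ax E\le E_{\mathrm{hyp}}$ in \Cref{prop:masschargeangularmomentum}, then \Cref{thm:area} and the final algebra.
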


\subsection*{Acknowledgements}
The author would like to thank Marcus Khuri for valuable discussions, as well as Santiago Cordero Misteli and Ted Roe for helpful comments. This paper was partially supported by NSF
Grant DMS-2405045 and Simons Foundation International, Ltd.

\section{Charged Jang reduction and a mass-charge inequality}\label{sec:mass-charge}

The goal of this section is to establish a lower bound for the mass-charge gap $m-|Q|$. Following the approach of \cite{allen2025proofspacetimepenroseinequality}, we construct an auxiliary initial data set and solve the Jang equation to obtain a manifold of weakly positive scalar curvature. The new step is to lift the electric field to the Jang graph using \cite[Equation 2.7]{disconzi2012penroseinequalitychargedblack} and to solve the charged harmonic equation on the resulting charged manifold. We then establish a Lichnerowicz-type integral inequality as in \cite[Theorem 3.4]{bray2021spacetimeharmonicfunctionsapplications} for charged harmonic equations with linear growth.

\subsection{Charged Jang deformation}\label{subsec:Jang}

Roughly speaking, a generalized exterior region represents the region outside of an outermost apparent horizon in $M$. The notion was first introduced by \cite{hirsch2021spacetimeharmonicfunctionsmass} for asymptotically flat initial data sets and later extended to the asymptotically hyperboloidal setting by \cite{allen2025proofspacetimepenroseinequality}, whose result is reproduced below. The construction involves removing an appropriate collection of MOTS/MITS and iteratively passing to a finite cover to reduce the Betti number.

\begin{lemma}[Existence of generalized exterior regions, \cite{allen2025proofspacetimepenroseinequality}]\label{lem:generalizedexteriorregion}
    Let $(M,g,k)$ be an initial data set which is either asymptotically flat or asymptotically hyperboloidal and satisfies the dominant energy condition. Then for each end $\Mend$, there exists an associated outermost apparent horizon $\Sigma\subset M$ and an initial data set $(\ext{M},\ext{g},\ext{k})$ with boundary satisfying the following properties.
    \begin{enumerate}[label=(\arabic*)]
        \item The second relative homology is trivial, $H_2(\ext{M},\del\ext{M};\Z)=0$.
        \item $\ext{M}$ has a single end which is isometric as initial data to $(\Mend,g,k)$.
        \item $\del\ext{M}$ is an outermost apparent horizon.
        \item The minimal enclosing area of $\del\ext{M}$ is not exceeded by the minimal enclosing area of $\Sigma$.
        \begin{align*}
            \A_\text{min}(\Sigma)\leq \A_\text{min}(\del\ext{M})
        \end{align*}
    \end{enumerate}
\end{lemma}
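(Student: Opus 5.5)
The plan is to follow the construction of Hirsch–Kazaras–Khuri for asymptotically flat data, as adapted to the hyperboloidal case in \cite{allen2025proofspacetimepenroseinequality}. The first step is to locate the outermost apparent horizon. Fix the end $\Mend$. Under the dominant energy condition, either asymptotics guarantees that large coordinate spheres in $\Mend$ are untrapped: $\theta_+>0$ and $\theta_->0$ in the flat case, and in the hyperboloidal case $\theta_+>0$ with the appropriate sign convention, since $k\to b$ and $H\to 2$. The existence theory of Andersson–Metzger \cite{anderssonmetzger2009areahorizonstrappedregion}, applied to the trapped and anti-trapped regions relative to $\Mend$, then yields an outermost apparent horizon $\Sigma$. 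This $\Sigma$ is a disjoint union of smooth MOTS and MITS components bounding, together with infinity, a region $M_1$ that contains $\Mend$ and has no other apparent horizons enclosing $\Sigma$. Let $M_1$ be the closure of the component of $M\setminus\Sigma$ containing $\Mend$. I also discard the other ends: each is separated from $\Mend$ by $\Sigma$, or else Galloway-type topological censorship would give a horizon in between. With this, properties (2) and (3) hold for $M_1$.

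The second step reduces $H_2(M_1,\partial M_1;\Z)$. By Lefschetz duality this group is $H^1(M_1;\Z)$, which is free of rank $b_1(M_1)$. If it is nonzero, take a nonseparating properly embedded surface $S$ representing a nontrivial class, and pass to a $\Z$- or finite cyclic cover by cutting $M_1$ along $S$ and gluing copies. Following \cite{hirsch2021spacetimeharmonicfunctionsmass}, one selects a single copy containing a lift of $\Mend$ and again takes the outermost apparent horizon there. The lifted data is locally isometric, so the dominant energy condition persists, and the end is preserved isometrically. The new manifold $M_2$ has $b_1(M_2)<b_1(M_1)$ relative to its boundary, or has a bounded complexity that decreases. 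Iterating finitely many times gives $\ext{M}$ with $H_2(\ext{M},\partial\ext{M};\Z)=0$.

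The third step is the area comparison (4). Any surface in $\ext{M}$ enclosing $\partial\ext{M}$ projects, by the covering map restricted to the region near the end, to a surface in $M$ enclosing $\Sigma$ with no greater area, since the projection is a local isometry. Hence $\A_{\min}(\Sigma)\le\A_{\min}(\partial\ext{M})$.

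I expect the main obstacle to be the iteration in the second step. One must ensure that after passing to a cover and re-extracting the outermost horizon, the topology genuinely simplifies, that the procedure terminates, and that the horizon in the cover still projects suitably, so that the area comparison and the outermost property survive. I would first try to control this with a decreasing integer such as $b_1$ of the compactified manifold, in the manner of \cite{hirsch2021spacetimeharmonicfunctionsmass}. In the hyperboloidal setting, the barrier argument for the untrapped spheres is the delicate point, and there I would rely on the Wang asymptotics.
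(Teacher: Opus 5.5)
Your outline (outermost horizon from Andersson--Metzger, then finite cyclic covers to kill $H^1(M_1;\Z)\cong H_2(M_1,\del M_1;\Z)$, then projection for (4)) matches the construction the paper defers to \cite{allen2025proofspacetimepenroseinequality}. The paper summarizes that construction only as removing MOTS/MITS and iteratively passing to a finite cover to reduce the Betti number. However, your proposal does not prove the one nontrivial point behind (1): that the Betti number actually drops, which you acknowledge yourself.

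Consider the $n$-fold cyclic cover $\hat M\to M_1$ dual to $[S]$. The transfer gives $b_1(\hat M)\geq b_1(M_1)$. About the new exterior region $M_2$ you only know two things: $\hat\Sigma=\del M_2$ separates the chosen lift $E_1$ of $\Mend$ from the other $n-1$ lifts, and no apparent horizon in $M_2$ encloses $\hat\Sigma$. Nothing in your argument rules out that $M_2$ contains a loop running once around the cycle of sheets. Nor does it rule out that $M_2$ contains lifts of the remaining handles taken from several different sheets. In either case $b_1(M_2)\geq b_1(M_1)$ is possible, and the iteration need not terminate.

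There is also no maximum principle to position $\hat\Sigma$ relative to the lifts of $S$ or to its deck translate, since a MOTS and a MITS may cross. Even the most favorable picture, with $M_2$ equal to a single sheet (i.e.\ $M_1$ cut along $S$), does not lower $b_1$ in general when $S$ has positive genus: cutting a mapping torus along a fiber with nontrivial monodromy raises $b_1$. So you must specify an integer invariant and prove it strictly decreases under ``pass to the cover, re-extract the outermost horizon.'' This is where genuine input has to enter: the dominant energy condition, stability of the new horizon, and the resulting restrictions on the topology of exterior regions. This is exactly what the cited construction supplies; ``or has a bounded complexity that decreases'' is not an argument. Also drop the infinite cyclic cover. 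It has infinitely many ends, the region outside the barriers is noncompact, and the Andersson--Metzger theory does not produce a compact outermost horizon there.

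Property (4) also needs more than you give it. Let $S'\subset\ext{M}$ enclose $\del\ext{M}$, and let $\pi:\ext{M}\to M_1$ be the composite projection, a proper, orientation-preserving local isometry. The image $\pi(S')$ is only an immersed surface with possibly overlapping sheets. It is not the boundary of a region enclosing $\Sigma$, so your area comparison is not yet meaningful.

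A correct version uses currents. Let $U\subset\ext{M}$ be the region between $S'$ and infinity, and set $m(x)=\#(\pi^{-1}(x)\cap U)$ for $x\in M_1$.
\begin{itemize}
\item The function $m$ is the density of the integer-multiplicity current $\pi_\#[U]$. Hence $m$ is BV with $|Dm|(M)=\mathbf{M}(\pi_\#[S'])\leq|S'|$.
\item It is supported in $M_1$ and equals $1$ near infinity in $\Mend$, because the other lifted ends were cut off at each stage.
\item By the coarea formula, $P(\{m\geq1\})\leq|Dm|(M)$.
\item The set $\{m\geq1\}$ has finite perimeter, lies in $M_1$, and contains the end, so its reduced boundary encloses $\Sigma$.
\end{itemize}
Together these give $\A_{\min}(\Sigma)\leq|S'|$.
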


The \emph{Jang surface} $(\Mj,\gj)$ is the graph of a smooth function $f:\ext{M}\to\R$ solving the \emph{Jang equation}
\begin{align}\label{eqn:jang}
    \lp \ext{g}^{ij} - \frac{f^i f^j}{1+|\grad f|^2} \rp \lp \frac{\grad_{ij} f}{\sqrt{1+|\grad f|^2}} - (\ext{k})_{ij} \rp = 0.
\end{align}
If the initial data set is axisymmetric, the solution to \eqref{eqn:jang} shall also be required to be axisymmetric, $\eta(f)=0$.

In the asymptotically flat setting, \cite[Theorem 3.1]{metzger2009blowupjangequationoutermost} shows that the Jang equation admits a solution with zero Dirichlet boundary along the end and prescribed blow-up at an outermost apparent horizon. In particular, the solution is shown to satisfy
\begin{alignat}{2}
    &f=O_3(r^{1-2q}) &\quad &\text{as} \quad r\to\infty, \label{cond:jangasymptoticsAF} \\
    &f(p)\to\pm\infty &&\text{as} \quad \text{dist}(p,\del_\pm\ext{M}) \to 0, \label{cond:jangblowupAF}
\end{alignat}
where $r$ is the radial coordinate on the asymptotically flat end and $\del\ext{M}=\del_+\ext{M}\sqcup\del_-\ext{M}$ consists of MOTS/MITS components respectively. Observe that the regularization procedure used in \cite{metzger2009blowupjangequationoutermost} preserves axisymmetry. Hence if the initial data is axisymmetric, then $f$ may also be taken to be axisymmetric=. 

Similarly, in the asymptotically hyperboloidal setting there exists a solution with
\begin{alignat}{2}
    &f=\sqrt{1+r^2} + \alpha \log{r} + \psi + O_3(r^{-1+\varepsilon}) &\quad &\text{as} \quad r\to\infty, \label{cond:jangasymptoticsAH} \\
    &f(p)\to\pm\infty &&\text{as} \quad \text{dist}(p,\del_\pm\ext{M}) \to 0, \label{cond:jangblowupAH}
\end{alignat}
for some $\varepsilon\in(0,1)$ and $\alpha,\psi\in C^3(S^2)$, where $r$ is the radial coordinate on the asymptotically hyperboloidal end \cite[Proposition 3.1]{allen2025proofspacetimepenroseinequality}. The proof of the asymptotic expansion depends heavily on the barrier estimates carried out in \cite[Section 3]{sakovich2021jangequationpositivemass}. Again, the solution obtained preserves the axisymmetry of the initial data.

A refinement of the blow-up in \cref{cond:jangblowupAF,cond:jangblowupAH} produces asymptotically cylindrical ends $\{\End_l\}_{l=1}^{l_0}$ for each component of $\del\ext{M}$ \cite[Theorem 4.1]{metzger2009blowupjangequationoutermost}. Let $\widecheck g=dt^2+g_0$ denote the product metric on the cylinder $\mathscr{C}^\pm=(0,\pm\infty)\times\del\ext{M}$. Then the Jang surface is a graph over $\mathscr{C}^\pm$ and the pullback of $\gj$ satisfies
\begin{align}\label{cond:cylinder_asymptotics}
    |\widecheck \grad^a(\gj-\widecheck g)|_{\widecheck g} = o(1) \quad \text{as} \quad t\to\pm\infty \quad \text{in} \quad \mathscr{C}^\pm,
\end{align}
for all $a=0,1,2,\ldots$, where $\widecheck\grad$ denotes covariant differentiation with respect to $\widecheck{g}$. The scalar curvature of the Jang graph satisfies
\begin{align}\label{eqn:jangscalarcurvature}
    R_{\gj} = 16\pi (\mu_E-J(w)) + 2|\E|_g^2 + |h-k|_{\gj}^2 + 2|X|_{\gj}^2 - 2\Div_{\gj}X,
\end{align}
where 
\begin{align}
    w = \frac{\grad f}{\sqrt{1+|\grad f|^2}}, \quad
    h = \frac{\grad^2 f}{\sqrt{1+|\grad f|^2}}, \quad
    X = (h-k)(w,\cdot).
\end{align}
The remaining properties of the Jang surface are reproduced below from \cite[Proposition 3.1]{allen2025proofspacetimepenroseinequality} for convenience. Their proof extends to the charged setting without modification, since the Einstein–Maxwell constraints and charged dominant energy condition only enter in the subsequent scalar-curvature estimates on the Jang graph.

\begin{proposition}[Existence of the Jang graph, \cite{allen2025proofspacetimepenroseinequality}]\label{prop:jang}
    Let $(M,g,k,\E)$ be an (axisymmetric) initial data set for the Einstein-Maxwell equations which is either asymptotically flat or asymptotically hyperboloidal and satisfies the charged dominant energy condition. Then for each end $\Mend$, there exists a Jang triple $(\Mj,\gj,X)$ consisting of a complete (axisymmetric) Riemannian manifold with asymptotic ends $\End_0 \bigsqcup_{l=1}^{l_0} \End_l$ and a smooth vector field $X$ satisfying the properties in \Cref{table:jang}.
\end{proposition}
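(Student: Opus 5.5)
The proof will be assembled from the existing constructions, and then each item of \Cref{table:jang} will be checked in the charged setting. Start from \Cref{lem:generalizedexteriorregion}, which gives the generalized exterior region $(\ext{M},\ext{g},\ext{k})$ attached to $\Mend$. Restrict $\E$ to $\ext{M}$. The charged dominant energy condition $\mu_E\geq|J|_g$ implies the uncharged one, because the uncharged energy density is $\mu=\mu_E+\frac{1}{8\pi}|\E|_g^2\geq\mu_E$. Hence the lemma, and the Jang existence results of \cite{metzger2009blowupjangequationoutermost} (asymptotically flat case) and \cite[Proposition 3.1]{allen2025proofspacetimepenroseinequality} (asymptotically hyperboloidal case), apply verbatim to the data with $\E$ discarded. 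This yields a solution $f$ of \eqref{eqn:jang} with asymptotics \eqref{cond:jangasymptoticsAF} or \eqref{cond:jangasymptoticsAH} and blow-up \eqref{cond:jangblowupAF} or \eqref{cond:jangblowupAH}. Set $\Mj=\mathrm{graph}(f)$, $\gj=\ext{g}+df^2$, and $X=(h-k)(w,\cdot)$. Completeness, the asymptotically flat or hyperboloidal end $\End_0$, and the cylindrical ends $\End_l$ with \eqref{cond:cylinder_asymptotics} are inherited directly from those references. The geometric construction never uses $\E$.

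The only genuinely new item is the scalar curvature formula \eqref{eqn:jangscalarcurvature} with the charged terms. Begin with the standard Schoen--Yau identity
\[
R_{\gj}=16\pi(\mu-J(w))+|h-k|_{\gj}^2+2|X|_{\gj}^2-2\Div_{\gj}X .
\]
Substitute $16\pi\mu=16\pi\mu_E+2|\E|_g^2$ from the Einstein--Maxwell constraint. This produces \eqref{eqn:jangscalarcurvature}. The charged dominant energy condition and $|w|_g<1$ then give
\[
R_{\gj}\geq 2|\E|_g^2+|h-k|^2_{\gj}+2|X|_{\gj}^2-2\Div_{\gj}X,
\]
with $16\pi(\mu_E-J(w))\geq0$. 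If \Cref{table:jang} also requires a lifted electric field, take the field of \cite[Equation 2.7]{disconzi2012penroseinequalitychargedblack}. Check three things for it: it is divergence free on $(\Mj,\gj)$, its $\gj$-norm is bounded by $|\E|_g$, and it has the same flux $4\pi\Q$ through large spheres. The last point holds because $f$ decays, or has the stated expansion, so the graph metric agrees with $\ext{g}$ to the order required for the flux integral.

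Next come the decay properties of $X$ on each end. On the asymptotically flat end, $X=O(r^{-1-2q})$ follows from the decay of $f$ and $k$. On the hyperboloidal end, use the expansions of \cite[Section 3]{sakovich2021jangequationpositivemass}. On the cylindrical ends, $X\to0$ together with its derivatives, by \cite[Theorem 4.1]{metzger2009blowupjangequationoutermost}. In the axisymmetric case, note that the regularization and barrier constructions commute with the $U(1)$ action. Uniqueness of the limiting solution modulo the construction, or averaging over $U(1)$ at the regularized level before passing to the limit, then gives $\eta(f)=0$. Consequently $\eta$ is Killing for $\gj$, and $X$ and the lifted field are $\eta$-invariant.

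I expect the main obstacle to be this last bookkeeping: confirming that axisymmetry survives the blow-up limit and that the lifted electric field has the correct flux and divergence-free property on the cylindrical ends. The curvature identity itself is a direct substitution.
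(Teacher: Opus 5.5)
Your route matches the paper's. The graph construction of \cite{allen2025proofspacetimepenroseinequality} involves only $(g,k)$, and $\mu=\mu_E+\frac{1}{8\pi}|\E|_g^2\geq\mu_E\geq|J|_g$; the paper uses this same observation in the proof of \Cref{lem:axisymmetriccoordinates}. So the Jang triple is inherited. However, one of your supporting claims is false: along the cylindrical ends $X$ does \emph{not} tend to zero in general. Indeed $h(w,\cdot)=\frac{\grad^2 f(\grad f,\cdot)}{1+|\grad f|^2}=d\log\sqrt{1+|\grad f|^2}$, so $X=d\log\sqrt{1+|\grad f|^2}-k(w,\cdot)$. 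For axisymmetric $f$ this gives $X(\eta)=-k(w,\eta)$ exactly. Three facts then combine: $|\eta|_{\gj}=|\eta|_g$, the cross-sections $\End_l^t$ converge to the components $\Sigma_l$ of $\del\ext{M}$, and $w\to\pm\nu$ there. Hence decay of $|X|_{\gj}$ would force $\int_{\Sigma_l}k(\nu,\eta)\,dA=0$ for every $l$. Since $\Div_g\big(k(\eta,\cdot)\big)=8\pi J(\eta)=0$ makes $\J$ a signed sum of these horizon integrals, this would give $\J=0$. So whenever $\J\neq0$, which is the case of interest here, $|X|_{\gj}$ does not tend to zero along some cylindrical end. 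Item (7) of \Cref{table:jang} only asks for boundedness, which is true and inherited. By the formula above, it amounts to a gradient bound for $\log\sqrt{1+|\grad f|^2}$, which I believe comes from a Harnack estimate for the positive Jacobi field $(1+|\grad f|^2)^{-1/2}$ of the linearized Jang operator, not from decay. Replace ``$X\to0$'' by ``$|X|_{\gj}$ bounded'' and the step is fine.

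Second, you do not check the only entries of the table that involve $\E$: item (3) and the curvature condition in item (4). They are also not literally the uncharged statements applied to $(M,g,k)$ with $\E$ discarded. That data has $\mu=\mu_E+\frac{1}{8\pi}|\E|_g^2$, and $|\E|_g^2$ is in general only $O(|x|^{-4})$ (of exact order $|x|^{-4}$ when $\Q\neq0$). So the $\mu$-decay in \eqref{cond:AF_asymptotics} is lost when $q>1$, and ``apply verbatim'' needs care. This is why the paper says the \emph{proof} extends. The construction uses only $g$ and $k$, and (3)--(4) are read off from the identity you derived, $R_{\gj}-2|\E|_g^2=16\pi(\mu_E-J(w))+|h-k|^2_{\gj}+2|X|^2_{\gj}-2\Div_{\gj}X$, using the decay of $\mu_E$, $J$, $f$ and $X$ on $\End_0$. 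That is where your identity should be used; the sign of $R_{\gj}$ is not part of this proposition.

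Two smaller corrections. First, averaging over $U(1)$ does not produce a solution of the nonlinear regularized equation (left side of \eqref{eqn:jang} equal to $\tau f$, $\tau>0$). What makes ``the regularization preserves axisymmetry'' true is uniqueness of the regularized solutions, by the comparison principle. Each regularized solution is therefore invariant, and so is any subsequential limit; it is the regularized problems, not the blow-up limit, whose uniqueness is used. Second, $\Ej$ is not part of this proposition, and your justification of $\Qj=\Q$ via closeness of $\gj$ to $\ext{g}$ fails in the hyperboloidal case. There $\gj$ is asymptotically Euclidean while $g$ is asymptotically hyperbolic. The flux identity in \Cref{prop:jangelectricfield} is exact and independent of the asymptotics. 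So the ``main obstacle'' you single out is not where the content lies.
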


\begin{table}[ht]
    \centering
    \begin{tabular}{|c||c|c|c|}
        \hline
        $\mathbf{(M,g)}$ & \textbf{Asymptotically flat} & \textbf{Asymptotically hyperboloidal} & \textbf{\#} \\ \hline
        \multirow{4}{*}{$(\Mj,\gj)$} & \multicolumn{2}{c|}{Trivial second homology: $H_2(\Mj,\bigcup_{l=1}^{l_0}\End_l;\Z)=0$} & \multicolumn{1}{c|}{(1)} \\ \cdashline{2-4}[1pt/2pt]
        & \multicolumn{2}{p{31em}|}{If $\jang\Sigma\subset \Mj$ is a surface separating $\End_0$ from $\bigcup_{l=1}^{l_0}\End_l$, then $|\jang \Sigma|\geq\A_\text{min}(\Sigma)$,} & \multicolumn{1}{c|}{\multirow{2}{*}{(2)}} \\[-6pt] 
        & \multicolumn{2}{p{31em}|}{where $\Sigma \subset M$ is the outermost apparent horizon given by \Cref{lem:generalizedexteriorregion}.} & \multicolumn{1}{c|}{} \\ \cdashline{2-4}[1pt/2pt]
        & $R_{\gj}-2|\E|_g^2\in L^1(\End_0)$ & $R_{\gj}-2|\E|_g^2 + 2\Div_{\gj}X\in L^1(\End_0)$ & \multicolumn{1}{c|}{(3)} \\ \hline
        \multirow{2}{*}{$\End_0$} & AF of order $\qj=q$ & \multicolumn{1}{p{18em}|}{AF of order $\qj=1$ with the $(\mu_E,J)$ equa-} & \multicolumn{1}{c|}{\multirow{2}{*}{(4)}} \\[-6pt]
        & & \multicolumn{1}{p{18em}|}{tion replaced by $R_{\gj}-2|\E|_g^2=O(r^{-3})$} & \multicolumn{1}{c|}{} \\ \cline{2-4}
        & $E_\mathrm{ADM}(\End_0)=E_\mathrm{ADM}(\Mend)$ & $E_\mathrm{ADM}(\End_0)=2E_\text{hyp}(\Mend)$ & \multicolumn{1}{c|}{(5)} \\ \hline 
        \multirow{3}{*}{$X$} & \multirow{2}{*}{$|X|_{\gj} = O(r^{-1-2q})$ along $\End_0$} & $|X|_{\gj}=O(r^{-2})$ along $\End_0$ & \multicolumn{1}{c|}{\multirow{2}{*}{(6)}} \\[-2pt] 
        & & $\lim_{r\to\infty} \int_{S_r} \langle X,\nu \rangle_{\gj} \, d\Aj = -4\pi E_\mathrm{ADM}(\End_0)$ & \multicolumn{1}{c|}{} \\ \cdashline{2-4}[1pt/2pt]
        & \multicolumn{2}{c|}{$|X|_{\gj}$ is bounded on $\End_l$ for $l=1,\ldots,l_0$} & \multicolumn{1}{c|}{(7)} \\ \hline
    \end{tabular}
    \caption{}
    \label{table:jang}
\end{table}

Let the \emph{Jang electric field} $\Ej$ be the smooth vector field on $\Mj$ given by
\begin{align}\label{def:jangelectricfield}
    \Ej = \frac{\E + \langle\E,\grad f\rangle_g \grad f}{\sqrt{1+|\grad f|_g^2}}.
\end{align}
This definition was introduced by \cite{disconzi2012penroseinequalitychargedblack} to obtain a charged Penrose inequality, provided a solution exists to the coupled Jang-IMCF system. The key properties of $\Ej$ are stated below. Note that their proof does not depend on $f$ or the asymptotic conditions of the underlying manifold. Altogether, we shall refer to $(\Mj,\gj,\Ej)$ as the \emph{charged Jang surface}.

\begin{proposition}[\cite{disconzi2012penroseinequalitychargedblack}]\label{prop:jangelectricfield}
    Let $(M,g,\E)$ be a charged manifold. Then for any smooth $f$, the electric field $\Ej$ defined by \eqref{def:jangelectricfield} satisfies the following properties:
    \begin{align}
        |\Ej|_{\gj}\leq|\E|_g, \qquad
        \Div_{\gj}\jang\E = \dfrac{\Div_g \E}{\sqrt{1+|\grad f|^2_g}}, \qquad
        \Qj = \Q,
    \end{align}
    where $\Qj$ is the total charge associated to $(\Mj,\gj,\Ej)$.
\end{proposition}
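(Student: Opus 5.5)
The plan is a direct computation in coordinates on $M$, using the graph metric $\gj = g + df\otimes df$ pulled back to $M$ via the graph map. All three claims become transparent once $\Ej$ is expressed in a convenient form, so the first step is an algebraic simplification. Set $v = 1+|\grad f|_g^2$ and $a = \langle \E,\grad f\rangle_g$. The inverse metric is
\begin{align*}
    \gj^{ij} = g^{ij} - \frac{f^if^j}{v},
\end{align*}
and the volume forms satisfy $d\Vol_{\gj} = \sqrt{v}\, d\Vol_g$.

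I read \eqref{def:jangelectricfield} as prescribing the $\gj$-dual $1$-form $\jang\omega = (\E^\flat + a\, df)/\sqrt{v}$, where $\flat$ is taken with respect to $g$. This is the reading for which the stated identities hold, and I will note it explicitly. Raising the index with $\gj^{-1}$ gives
\begin{align*}
    \gj^{-1}(\E^\flat + a\,df) = \E + a\grad f - \frac{\grad f\,(a + a|\grad f|^2)}{v} = \E,
\end{align*}
so that, as a vector field on $M$, $\Ej = \E/\sqrt{v}$. This identity is the key step. The only real obstacle is fixing the correct index convention: naively treating \eqref{def:jangelectricfield} as a $g$-vector and measuring it in $\gj$ gives a norm that is too large.

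For the pointwise bound, compute
\begin{align*}
    |\Ej|_{\gj}^2 = \jang\omega(\Ej) = \frac{(\E^\flat + a\,df)(\E)}{v} = \frac{|\E|_g^2 + a^2}{v}.
\end{align*}
Cauchy--Schwarz gives $a^2 \le |\E|_g^2|\grad f|_g^2$, hence $|\Ej|_{\gj}^2 \le |\E|_g^2$.

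For the divergence, use $\Div_{\gj} Y = v^{-1/2}\Div_g(\sqrt{v}\,Y)$, which follows from the volume form relation. With $Y = \E/\sqrt{v}$ this gives $\Div_{\gj}\Ej = \Div_g\E/\sqrt{v}$ immediately.

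For the charge, I would again use differential forms. For any vector field $Y$ and oriented surface $S$, the flux $\int_S \langle Y,\nu\rangle\, dA$ equals $\int_S \iota_Y d\Vol$. Since $\iota_{\E/\sqrt{v}}\, d\Vol_{\gj} = \iota_{\E}\, d\Vol_g$, the $\gj$-flux of $\Ej$ through any surface equals the $g$-flux of $\E$ through the same surface. Applying this to the spheres $S_r$ and letting $r\to\infty$ gives $\Qj = \Q$.

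No asymptotic information on $f$ is used, consistent with the remark preceding the statement. The one point to check is that $\Qj$ is defined with $\gj$-coordinate spheres. Any homologous sphere in the end gives the same flux whenever $\Div_g\E = 0$, by the divergence identity just proved, so the limit is unambiguous.
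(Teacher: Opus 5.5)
Your argument is correct and is the standard computation behind the cited result of Disconzi--Khuri; the paper gives no proof of its own beyond that citation. Reading \eqref{def:jangelectricfield} as the $\gj$-dual $1$-form gives $\Ej=\E/\sqrt{1+|\grad f|_g^2}$ as a vector field, and all three properties then follow from $\det\gj=(1+|\grad f|_g^2)\det g$. Your caution about the index convention is well placed: read literally as a vector field measured in $\gj$, the formula violates the norm bound (for $\E$ parallel to $\grad f$ one gets $|\Ej|_{\gj}=(1+|\grad f|_g^2)|\E|_g$).
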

\subsection{Charged harmonic functions}

Let $(M,g,\E)$ be a complete, charged, asymptotically flat manifold. We are interested in solutions of the equation
\begin{align}\label{eqn:chargedharmonic}
    \begin{cases}
        \Delta u - \langle \grad u,\E\rangle = 0 & \text{on}\quad M, \\
        u=a_ix^i + O_2(|x|^{1-q}) & \text{as}\quad |x|\to\infty,
    \end{cases}
\end{align}
where $a_i$ are fixed constants and $x^i$, $q$ are respectively the coordinates and order of the asymptotically flat end. We shall refer to $\Deltac u = \Delta u - \langle\grad u,\E\rangle$ as the \emph{charged Laplacian} of $u$.

There is a special case where the charged Laplacian arises as the Laplace-Beltrami operator of a metric. Suppose $\E$ admits a scalar potential $\phi$ and consider the conformal metric $\gc=e^{-2\phi}g$. Then $\Deltac u = e^{-2\phi}\Delta_{\gc} u$, where the Laplacian is taken with respect to $\gc$. In general, there are both local and global obstructions to the existence of an electric potential--namely that $\E$ may fail to be closed, or if closed, $[\E^\flat]\in H^1(M)$ may be nontrivial.

Even without an electric potential, it is still possible to construct a symmetric 2-tensor whose trace is equal to the charged Laplacian. Define an affine connection $\grad^\E$ on $(M,g)$ by
\begin{align}\label{def:chargedconnection}
    \grad^\E_V W = \grad_V W - \langle\E,V\rangle W-\langle\E,W\rangle V+\langle V,W\rangle \E,
\end{align}
where $V,W$ are smooth vector fields on $M$. It is straightforward to check that $\grad^\E$ is torsion-free and satisfies
\begin{align}\label{eqn:metricdeficit}
    \grad^\E g = 2\E^\flat\otimes g.
\end{align}
The existence of an electric potential is therefore equivalent to the existence of a conformal metric $g_\phi = e^{-2\phi}g$ such that the connection $\grad^\E$ is $g_\phi$-metric compatible, since \eqref{eqn:metricdeficit} implies
\begin{align}
    \grad^\E g_\phi 
    = 2(\E^\flat-d\phi)\otimes g_\phi
    = 0
    \iff \E^\flat = d\phi.
\end{align}
Hence $\grad^\E$ defines a Weyl connection on the conformal class of $(M,g)$. Define the \emph{charged Hessian} of $u$ to be the symmetric 2-form $\gradc^2 u$ given by
\begin{align}\label{def:chargedhessian}
    \gradc^2 u = \grad^\E du.
\end{align}
In coordinates, $\gradc_{ij} u = \grad_{ij} u + 2\E_{(i} u_{j)} - \langle\E,\grad u\rangle g_{ij}$, where $u_i$ denotes the partial derivative of $u$. Next, we prove a Kato-type inequality for charged harmonic functions (cf. \cite[Equation~3.11]{hirsch2021spacetimeharmonicfunctionsmass}).

\begin{lemma}\label{lem:kato}
    Let $u$ be a charged harmonic function on $(M,g,\E)$. Then
    \begin{align}\label{lem:kato:eqn1}
        \Big\vert \grad|\grad u|+|\grad u|\E \Big\vert^2 \leq \frac{2}{3} |\gradc^2 u|^2,
    \end{align}
    almost everywhere, where $\gradc^2 u$ is defined by \eqref{def:chargedhessian}. If $\Div_g\E=0$, then \eqref{lem:kato:eqn1} is equivalent to
    \begin{align}\label{lem:kato:eqn2}
        |\grad|\grad u||^2 + 2|\grad u|\Div_g(|\grad u|\E) + |\grad u|^2|\E|^2
        \leq \frac{2}{3}|\gradc^2 u|^2.
    \end{align}
\end{lemma}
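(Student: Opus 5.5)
The plan is to identify the left-hand side of \eqref{lem:kato:eqn1} with one row of the charged Hessian $\gradc^2 u$, show that $\gradc^2 u$ is trace-free precisely because $u$ is charged harmonic, and then apply the elementary trace-free Kato bound.

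\textbf{Reduction to one row of the charged Hessian.} Work first at a regular point, $\grad u\neq 0$. Choose an orthonormal frame $\{e_1,e_2,e_3\}$ with $e_1=\grad u/|\grad u|$, so that $u_j=|\grad u|\delta_{1j}$. Insert this into the coordinate formula $\gradc_{ij}u=\grad_{ij}u+\E_iu_j+\E_ju_i-\langle\E,\grad u\rangle g_{ij}$. The term $\E_1|\grad u|\delta_{1j}$ cancels against $-\langle \E,\grad u\rangle\delta_{1j}$, which leaves
\begin{align*}
    \gradc_{1j}u=\grad_{1j}u+|\grad u|\E_j .
\end{align*}
Since $\grad_j|\grad u|=\grad^2u(e_1,e_j)$ at regular points, this says
\begin{align*}
    \sum_j (\gradc_{1j}u)^2=\big|\grad|\grad u|+|\grad u|\E\big|^2 .
\end{align*}
Taking the trace of the coordinate formula gives $\tr_g\gradc^2u=\Delta u+2\langle\E,\grad u\rangle-3\langle\E,\grad u\rangle=\Deltac u=0$. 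So $H:=\gradc^2u$ is a symmetric, trace-free $3\times 3$ matrix in this frame.

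\textbf{Trace-free Kato bound.} For such $H$ we have $H_{22}+H_{33}=-H_{11}$, hence $H_{22}^2+H_{33}^2\geq \tfrac12 H_{11}^2$. Therefore
\begin{align*}
    |H|^2\geq H_{11}^2+2\sum_{j>1}H_{1j}^2+H_{22}^2+H_{33}^2\geq \tfrac32 H_{11}^2+2\sum_{j>1}H_{1j}^2\geq\tfrac32\sum_j H_{1j}^2,
\end{align*}
which is \eqref{lem:kato:eqn1} at regular points.

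\textbf{Critical set.} This is the only delicate step, and it is mild. The function $|\grad u|$ is locally Lipschitz, so it is differentiable a.e. On the critical set $\{\grad u=0\}$, the gradient of the Lipschitz function $|\grad u|$ vanishes a.e., because a Lipschitz function has zero gradient a.e. on any of its level sets. Hence the left-hand side of \eqref{lem:kato:eqn1} vanishes a.e. there, while the right-hand side is nonnegative. This gives \eqref{lem:kato:eqn1} almost everywhere.

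\textbf{Equivalent form.} Expanding the square gives
\begin{align*}
    \big|\grad|\grad u|+|\grad u|\E\big|^2=|\grad|\grad u||^2+2|\grad u|\langle\grad|\grad u|,\E\rangle+|\grad u|^2|\E|^2 .
\end{align*}
The product rule gives
\begin{align*}
    |\grad u|\Div_g(|\grad u|\E)=|\grad u|\langle\grad|\grad u|,\E\rangle+|\grad u|^2\Div_g\E .
\end{align*}
When $\Div_g\E=0$ the last term drops, so the middle term of the expansion equals $2|\grad u|\Div_g(|\grad u|\E)$. This shows \eqref{lem:kato:eqn1} and \eqref{lem:kato:eqn2} are equivalent, a.e. in the same sense as above.
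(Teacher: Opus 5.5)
Your proposal is correct and follows essentially the same route as the paper. At regular points you identify $\grad|\grad u|+|\grad u|\E$ with $\gradc^2 u(\grad u/|\grad u|,\cdot)^\sharp$, use that $\gradc^2u$ is trace-free because $\Deltac u=0$, apply the trace-free estimate of \Cref{lem:tracelessestimate}, and expand the square using $\Div_g\E=0$. The only minor difference is on the critical set: the paper instead notes that $|\grad u|$ attains its minimum value $0$ there, so its gradient vanishes at every point of differentiability, which avoids citing the general fact about Lipschitz functions on level sets.
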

\begin{proof}
    On the set where $|\grad u|$ is differentiable, $|\grad|\grad u||=0$ whenever $|\grad u|$ achieves its minimum. Consequently, whenever $|\grad u|=0$, the inequality is trivially satisfied. Since the set where $|\grad u|$ vanishes and is non-differentiable has measure zero, we may assume for the remainder of the proof that $|\grad u|\neq 0$. We compute
    \begin{equation}
        \begin{aligned}
            \gradc^2 u(\grad u,\cdot)^\sharp
            &= \grad^2 u(\grad u,\cdot)^\sharp + \E(u) \grad u + |\grad u|^2 \E - \E(u) \grad u \\
            &= |\grad u|\grad|\grad u| + |\grad u|^2 \E.
        \end{aligned}
    \end{equation}
    Dividing by $|\grad u|$ yields $\gradc^2u(\frac{\grad u}{|\grad u|},\cdot)^\sharp = \grad|\grad u|+|\grad u|\E$. Because $\gradc^2 u$ is trace-free by assumption, we may apply \Cref{lem:tracelessestimate} to obtain \eqref{lem:kato:eqn1}. If $\E$ is divergence-free, then
    \begin{align}
        \Div_g(|\grad u|\E) 
        = \langle\grad|\grad u|,\E\rangle + |\grad u|\Div_g\E
        = \langle\grad|\grad u|,\E\rangle,
    \end{align}
    so expanding the left-hand side of \eqref{lem:kato:eqn1} yields \eqref{lem:kato:eqn2}.
\end{proof}

\subsubsection{Existence and asymptotic behavior}\label{sec:chargedharmonicexistence}

Let $(\Mj,\gj,\Ej)$ be the charged Jang surface from \Cref{subsec:Jang} and consider the asymptotically cylindrical ends $\{\End_l\}_{l=1}^{l_0}$ as isometrically embedded in $(\ext{M}\times\R,g+dt^2)$. Denote by $\End_l^t$ the cross-section obtained by intersecting such an end with the $t$-level set. There exists a level $t_0>0$ such that $\End_l^t$ is either empty or consists of a single sphere for all $|t|\geq t_0$ and $l=1,\ldots,l_0$. Let $\Mj_t$ be the metric completion of the component of $\Mj \,\setminus\, \{\End_l^{\pm t}\}_{l=1}^{l_0}$ containing the asymptotically flat end $\End_0$, where $|t|\geq t_0$. By \eqref{cond:cylinder_asymptotics}, the truncated regions $\Mj_t$ may be capped off by 3-balls $\{\til B^t_l\}_{l=1}^{l_0}$ such that the resulting geometry is bounded uniformly and independently of $t$. Denote the subsequent capped manifold by $(\til M_t,\gt_t)$. Observe that $\til M_t$ is complete with one asymptotically flat end and satisfies $\gt_t=\gj$ on $\Mj_t$. 

\begin{figure}[ht]
    \centering
    \begin{subfigure}[t]{0.52\textwidth}
        \centering
        \includegraphics[height=4.5cm]{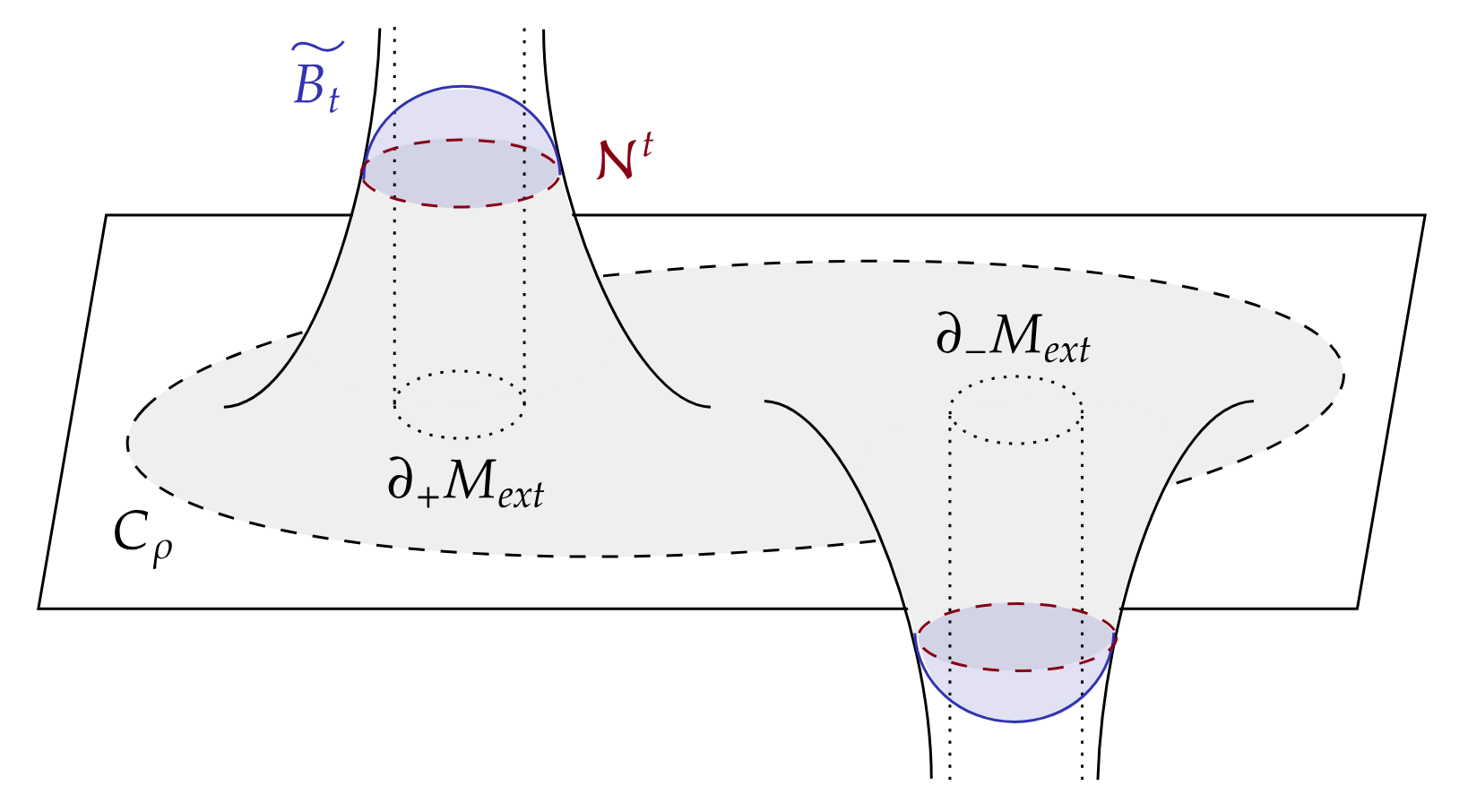}
        \caption{Capped domain $\til M_t$ and a coordinate cylinder $C_\rho\subset\End_0$.}
    \end{subfigure}
    \hfill
    \begin{subfigure}[t]{0.47\textwidth}
        \centering
        \includegraphics[height=4.5cm]{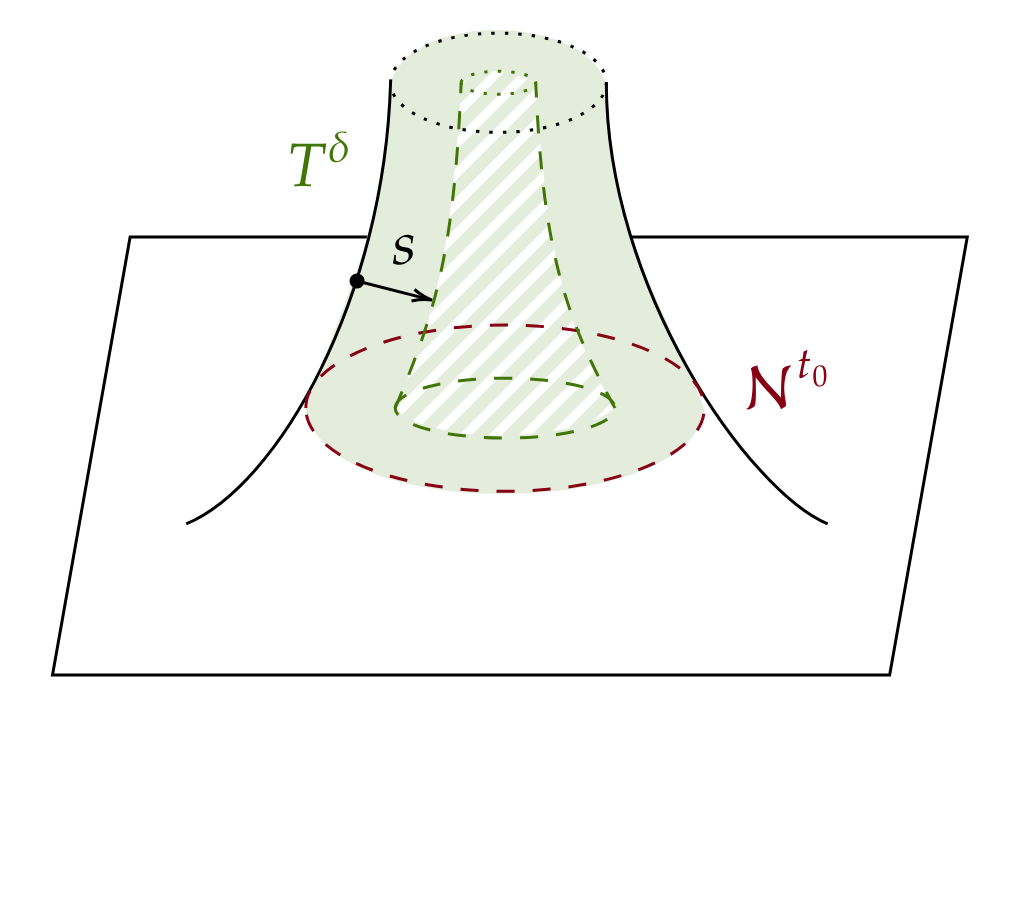}
        \caption{Half-tubular neighborhood of $\End$.}
    \end{subfigure}
\end{figure}

Next, we extend the electric field to the capped domain. Let $n$ be the unit normal to $\End_l\subset\ext{M}\times\R$, with the orientation chosen so that $n\to(\nu_\text{out},0)$ as $|t|\to\infty$, where $\nu_\text{out}$ is the outward unit normal to $\del_\pm\ext{M}\subset\ext{M}$. Let $d_{\End_l}:\ext{M}\times\R\to\R$ be the signed distance to $\End_l$, with the sign chosen so that $d_{\End_l}>0$ in the prescribed outward direction. For $\delta>0$ sufficiently small, define the half-tubular neighborhood
\begin{align}
    T^\delta_l = \{ \exp_p(sn_p) :\, p\in\End_l, \, |t(p)|\geq t_0, \, 0\leq s < \delta\},
\end{align}
after increasing $t_0$ if necessary so that the exponential map gives tubular coordinates. Let $\eta$ be a smooth bump function supported on $[-\delta,\delta]$ with $\eta(0)=1$. We extend $\Ej$ to $T^\delta_l$ by setting 
\begin{align}
    \Et(\exp_p(sN_p)) = \eta(s)\Ej(p), \quad p\in\End_l, \quad 0\leq s < \delta.
\end{align}
Then $\Et=\Ej$ on $\End_l$ and extends smoothly by zero across the outer boundary of $T_l^\delta$. The extended electric field need not satisfy the charged dominant energy condition on the caps, but this does not affect the estimates used below. For $|t|\geq t_0$, we may apply \cite[Theorem 3.1]{bartnik1986massasymptoticallyflatmanifold} to obtain solutions to the charged harmonic equation on the capped Jang graph.
\begin{align}\label{eqn:cappedchargedharmonic}
    \begin{cases}
        \Delta_{\gt_t} u_t - \langle \Et,\gradt u_t\rangle_{\gt_t} = 0 & \text{on}\quad\til M_t, \\
        u_t = a_i x^i + O_2(r^{1-\qj}) & \text{as}\quad r\to\infty
    \end{cases}
\end{align}
Moreover, we obtain the same decay along the cylindrical ends as in \cite{allen2025proofspacetimepenroseinequality}.

\begin{lemma}\label{lem:chargedharmonic}
    There exists a charged harmonic function $u$ with the asymptotics of \eqref{eqn:cappedchargedharmonic} on $\End_0$ and a sequence of charged harmonic functions $u_{t_n}$ on $\smash{\til M_{t_n}}$ for some $t_n\to\infty$ such that the following holds.
    \begin{enumerate}[label=(\arabic*)]
        \item $u_{t_n}\to u$ in $C^{2,\alpha}_\text{loc}(\Mj)$ for any $\alpha\in(0,1)$.
        \item $|\gradt u_{t_n}|\to 0$ uniformly on the caps $\{\til B_l\}_{l=1}^{l_0}$.
        \item For each cylindrical end $\End_l$, there exists a constant $c_l$ such that
        \begin{align*}
            |\gradj^a (u-c_l)| = o(1) \quad \text{as} \quad |t|\to\infty,
        \end{align*}
        for $a=0,1,2$.
    \end{enumerate}
\end{lemma}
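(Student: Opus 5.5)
The plan is to follow the scheme of \cite{allen2025proofspacetimepenroseinequality} for ordinary harmonic functions, with the charged Laplacian treated as a first-order perturbation of $\Delta_{\gt_t}$. The central step is a uniform estimate for $u_{t}$ that does not depend on $t$. Write $u_t = a_ix^i\chi + v_t$, where $\chi$ is a cutoff supported in $\End_0$. Then $v_t$ solves $\Delta_{\gt_t}v_t-\langle\Et,\gradt v_t\rangle = -\Deltac(a_ix^i\chi)=:F$, and $F=O(r^{-1-\qj})$ on $\End_0$ because $|\Ej|\le|\E|_g=O(r^{-2})$ by \Cref{prop:jangelectricfield}. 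Bartnik's theory in weighted spaces gives $v_t\in C^{2,\alpha}_{1-\qj}$. To make the bound uniform in $t$, I would follow the argument of \cite{allen2025proofspacetimepenroseinequality}. First, the geometry of $\til M_t$, including $\Et$ and its derivatives, is bounded uniformly in $t$, by \eqref{cond:cylinder_asymptotics}, by the cutoff construction of $\Et$, and by $|\Ej|\le|\E|$ with $\E$ bounded on compact sets. Second, the operator $\Deltac$ satisfies a maximum principle, since it has no zeroth-order term. Combining these with a barrier on $\End_0$ of the form $\pm C r^{1-\qj}$ bounds $\sup|v_t|$ by a constant independent of $t$. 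Interior Schauder estimates then give uniform $C^{2,\alpha}$ bounds on compact subsets. Applying Arzelà--Ascoli together with a diagonal argument produces $t_n\to\infty$ and a limit $u$ that solves $\Deltac u=0$ on $\Mj$, with $u_{t_n}\to u$ in $C^{2,\alpha}_\mathrm{loc}$. The limit keeps the asymptotics on $\End_0$, because the weighted bounds are uniform. This proves (1).

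For (2) and (3), I would use the cylindrical geometry. Along $\End_l$, the metric $\gj$ converges to $dt^2+g_0$, and $\Ej$ is bounded because $|\Ej|\le|\E|_g$ with $\E$ smooth up to $\del\ext{M}$. So on the portion of $\End_l$ with $|t|\geq t_0$, the function $u$ solves a uniformly elliptic equation with bounded drift on an almost-product cylinder. To show convergence to a constant, I would first prove that the energy on the cylinder decays. Multiply the equation by $e^{-\phi}$-type weights, or simply integrate $|\gradj u|^2$ against cutoffs over the slab $[T,T+1]\times\del\ext{M}$. Boundedness of $u$ (from the uniform sup bound) together with Caccioppoli inequalities shows that $\int_{\End_l}|\gradj u|^2<\infty$. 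Then translate along the cylinder: any subsequential limit of $u(\cdot+T)$ as $T\to\infty$ solves $\Deltac w=0$ on the full product cylinder $\R\times\del\ext{M}$ with drift $\Et_\infty$, and has zero energy. It is therefore constant. The Schauder estimates upgrade this to $|\gradj^a(u-c_l)|=o(1)$ for $a=1,2$. The $a=0$ statement requires the constant to be unique, which I would obtain from monotonicity of the slice averages, or from the finite-energy bound forcing the oscillation over $[T,\infty)$ to tend to zero.

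For (2) the same energy argument is applied to $u_{t_n}$. The key estimate is
\[
\int_{\til M_{t}\setminus K}|\gradt u_t|^2\le C
\]
uniformly in $t$, obtained by testing the equation with $v_t$ and using the decay on $\End_0$. The caps sit at distance approximately $t$ down the cylinders, so the gradient on them is controlled by the energy in a slab far out. Local elliptic estimates then show that this tends to zero.

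The main obstacle is the energy estimate in the presence of the drift term. The integration by parts produces $\int v\langle\Et,\gradt v\rangle=\tfrac12\int\langle\Et,\gradt v^2\rangle=-\tfrac12\int v^2\Div\Et$ plus boundary terms. Here $\Div_{\gj}\Ej=0$ on $\Mj$ by \Cref{prop:jangelectricfield}, but $\Div\Et\neq0$ on the caps and tubes. I would handle this using the uniform $L^\infty$ bound on $v_t$: the cap contribution is then bounded by a constant independent of $t$, because the caps have uniformly bounded geometry.
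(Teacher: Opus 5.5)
Your part (1) is essentially the paper's argument: uniform local bounds, then Schauder estimates, then a diagonal subsequence. The paper also defers the barrier step to Lemma 3.3 of Allen--Bryden--Kazaras--Khuri. Note, though, that the barrier you name does not work as stated: $r^{1-\qj}$ is subharmonic when $\qj<1$, and $v_t$ need not be bounded on $\End_0$.

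\textbf{Gap 1: finite energy on the cylinders.} Everything in (2)--(3) rests on this, and your justification fails. Boundedness of $u$ plus Caccioppoli only bounds the energy on each unit slab, so over a cylinder of length $L$ you get $O(L)$. The drift term also contributes $\int\zeta^2|\Ej|^2u^2$ over the whole region. The estimate you place under (2) does not repair this as set up. Testing with $v_t$ on $\til M_t$ breaks down on $\End_0$: there $u_t$ has infinite Dirichlet energy, and even $|\gradj v_t|^2=O(r^{-2\qj})$ and $v_tF=O(r^{-2\qj})$ are not integrable unless $\qj>3/2$. What the paper does instead is the following.
\begin{itemize}
\item It integrates by parts only over the annular piece $\End_l^{m,n}$ of the cylinder between two cross-sections, with no caps and no AF end.
\item It uses $\Div_{\gj}\Ej=0$ to write $\int u_n\langle\Ej,\gradj u_n\rangle=\frac12\int_{\partial\End_l^{m,n}}u_n^2\langle\Ej,\nu\rangle$.
\item It bounds all resulting boundary terms with the uniform H\"older bounds from (1) and the cylindrical asymptotics.
\end{itemize}
This gives $\|\gradj u_n\|_{L^2(\End_l^{m,n})}\le C$ independently of $m,n$, and Fatou's lemma passes the bound to $u$. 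Once that is in place, your translation/compactness argument for $a=1,2$ is a fine substitute for the paper's $H^4$ bootstrap and Sobolev embedding.

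\textbf{Gap 2: steps that need the maximum principle, not energy.}

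For $a=0$ in (3), finite energy and boundedness do not force convergence. For example, $\sin(\log t)$ on a cylinder has finite $\int|\partial_t u|^2$ but no limit. The paper first extracts slabs $\End_l^{(t_{n_i}-1),t_{n_i}}$ on which $\sup|u-c_l|\to0$, using Poincar\'e and Sobolev. It then applies the maximum principle on the intermediate regions to get the full limit.

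For (2), a uniform bound on the total energy of $u_{t_n}$ along the cylinder says nothing about the energy in the slab next to the cap. That slab sits at distance $\sim t_n$, outside every compact set on which $u_{t_n}\to u$, so the energy could a priori concentrate there. The paper's argument runs as follows.
\begin{itemize}
\item It chooses slices $s_n\to\infty$ slowly enough that $|\gradj^a(u_{t_n}-c_l)|=o(1)$ on $\End_l^{s_n}$, combining (1) and (3).
\item It applies the maximum principle for the charged Laplacian on $\End_l^{s_n,t_n}\cup\til B_l^{t_n}$ to get $|u_{t_n}-c_l|=o(1)$ on that whole region.
\item It concludes with interior gradient estimates on the caps, which have uniformly bounded geometry.
\end{itemize}
You already note the maximum principle in (1), so both repairs are within reach. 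As written, however, these steps do not follow.
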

\begin{proof}
    To show (1), we may apply a standard barrier argument to obtain uniform pointwise bounds on compact subsets for solutions of \eqref{eqn:cappedchargedharmonic}. Then elliptic theory implies H\"older estimates of the form
    \begin{equation}\label{eqn:uniformholder}
        |u_t|_{C^{k,\alpha}(\Omega_t)} \leq C_k,
    \end{equation}
    for any precompact $\Omega_t\subset\til M_t$ of uniformly bounded geometry, where $C_k$ are constants independent of $t$. By a diagonal argument, there is a subsequence $u_{t_n}$ converging to $u$ in $C_\mathrm{loc}^{2,\alpha}(\Mj)$ for any $\alpha\in(0,1)$, where $u$ is a charged harmonic function on $(\Mj,\gj)$ admitting the asymptotics of \eqref{eqn:cappedchargedharmonic} on $\End_0$ (refer to \cite[Lemma 3.3]{allen2025proofspacetimepenroseinequality} for the full details).

    The estimates for $u_t$ along the cylindrical ends established in \cite{allen2025proofspacetimepenroseinequality} extend to the charged setting after accounting for the new terms introduced by the electric field. Observe that the divergence-free property of $\E$ is inherited by the Jang electric field according to \Cref{prop:jangelectricfield}. Additionally, since $\E$ is smooth and $\del\ext{M}$ is compact, there is a compact neighborhood $\Omega$ of $\del\ext{M}$ such that $|\E|_{C^k(\Omega)}<\infty$ for every $k\geq 0$. In what follows, we shall denote $u_n=u_{t_n}$ for convenience. Suppose $t_0\leq t_m<t_n$ and let $\End_l^{m,n} \subset \End_l$ be the annular region bounded between the cross-sections $\End^{\pm t_m}_l$ and $\End^{\pm t_n}_l$. After multiplying the charged Laplace equation by $u_n$, we may integrate by parts to find
    \begin{align}
        \norm{\gradj u_n}^2_{L^2(\End_l^{m,n})}
        &= \int_{\del\End_l^{m,n}} u_n \gradj_\nu u_n \, d\Aj + \int_{\End_l^{m,n}} u_n \langle\Ej,\gradj u_n\rangle_{\gj} \, d\Vj,
    \end{align}
    where $\nu$ is the unit outer normal to the boundary. Let $\pi:\Mj\to M$ denote the projection from the graph of $f$ to $M$. Since $\E$ is divergence free, by \Cref{prop:jangelectricfield} we have
    \begin{equation}
        \begin{aligned}
            \int_{\End_l^{m,n}} u_n \langle\Ej,\gradj u_n\rangle_{\gj} \, d\Vj
            &= \frac{1}{2} \int_{\End_l^{m,n}} \Div_{\gj}(u_n^2\Ej) - u_n^2 \Div_{\gj}\Ej \, d\Vj \\
            &= \frac{1}{2} \int_{\del\End_l^{m,n}} u_n^2 \langle\Ej,\nu\rangle_{\gj} \, d\Aj \\
            &\leq \frac{1}{2} |\E|_{C^0(\pi(\del\End_l^{m,n}))} \int_{\del\End_l^{m,n}} u_n^2 \, d\Aj.
        \end{aligned}
    \end{equation}
    Then by \eqref{eqn:uniformholder} and \eqref{cond:cylinder_asymptotics}, there exists a constant $C$ independent of $m,n$ such that
    \begin{align}\label{eqn:uniformH1}
        \norm{\gradj u_n}^2_{L^2(\End_l^{m,n})}
        \leq C |\del\End_l^{m,n}|
        \leq C.
    \end{align}
    For the second derivative, we may square the charged Laplace equation and integrate by parts to get
    \begin{align}
        \big\lVert\gradj^2 u_n\big\rVert^2_{L^2(\End_l^{m,n})}
        &= \int_{\del\End_l^{m,n}} \gradj^2 u_n(\gradj u_n,\nu) - \langle\Ej,\gradj u_n\rangle_{\gj} \gradj_\nu u_n \, d\Aj
        + \int_{\End_l^{m,n}} \langle\Ej, \gradj u_n\rangle_{\gj}^2 - \Ric_{\gj}(\gradj u_n,\gradj u_n) \, d\Vj \nonumber \\
        &\leq \int_{\del\End_l^{m,n}} |\gradj^2 u_n|_{\gj} |\gradj u_n|_{\gj} + |\E|_g |\gradj u_n|_{\gj}^2 \, d\Aj + \int_{\End_l^{m,n}} \lp |\E|_g^2 + |\Ric_{\gj}| \rp |\gradj u_n|^2 \, d\Vj \nonumber \\
        &\leq C \label{eqn:uniformH2},
    \end{align}
    where we again used \Cref{prop:jangelectricfield}, \eqref{cond:cylinder_asymptotics}, \eqref{eqn:uniformholder}, as well as \eqref{eqn:uniformH1} to obtain a uniform constant $C$. By performing similar but lengthy computations, we may bootstrap to obtain the same control over all higher order derivatives. Then by Fatou's lemma, the same estimate holds in the limit $n\to\infty$.
    \begin{equation}\label{eqn:uniformsobolev}
        \sum_{a=1}^4 \big\lVert\gradj^a u\big\rVert_{L^2(\End_l)} \leq C
    \end{equation}
    Lastly, apply the Sobolev embedding $H^3\hookrightarrow C^1$ on $\End_l^{(t-1),t}$ to obtain $|\gradj^a u|=o(1)$ as $|t|\to\infty$ for $a=1,2$.

    To show (3), take a sequence $t_n\to\infty$ and consider the annular regions $\End_l^{(t_n-1),t_n}$. Then by the Poincar\'e inequality, there exists a constant $C$ independent of $n$ such that
    \begin{align}
        \int_{\End_l^{(t_n-1),t_n}} (u-c_{ln})^2\,d\Vj \leq C\int_{\End_l^{(t_n-1),t_n}} |\gradj u|^2 \, d\Vj,
    \end{align}
    where $c_{ln}$ are the averages of $u$ over the annuli. It follows from \eqref{eqn:uniformholder} that $\{c_{ln}\}_n$ is uniformly bounded and hence admits a convergent subsequence $c_{ln_i}\to c_l$. Then \eqref{eqn:uniformsobolev} and Sobolev embedding imply
    \begin{align}
        \sup_{\End_l^{(t_{n_i}-1),t_{n_i}}} |u-c_l| = o(1) \quad \text{as} \quad i\to\infty.
    \end{align}
    The maximum principle may then be applied to obtain the same conclusion on the intermediate regions $\End_l\vphantom{x}^{t_{n_i},(t_{n_{i+1}}-1)}$. This yields the full limit $|u-c_l|=o(1)$ as $|t|\to\infty$, concluding the proof of (3).

    To show (2), let $s_n$ be a sequence tending to infinity sufficiently slowly with $s_n<t_n$ so that (1) yields $|\gradj^a(u-u_n)|=o(1)$ when restricted to $\End_l^{s_n}$ for $a=0,1,2$. Then $u_n$ is a sequence of functions defined on $\smash{\End_l\vphantom{x}^{s_n,t_n} \cup \til B\vphantom{x}^{t_n}_l}$ solving the boundary value problem
    \begin{align}
        \begin{cases}
            \Delta_{\gt_{t_n}} u_n - \langle \Et, \gradt u_n\rangle_{\gt_{t_n}} = 0 & \text{in} \quad \End_l^{s_n,t_n} \cup \til B^{t_n}_l, \\
            |\gradj^a (u_n-c_l)|=o(1) & \text{on} \quad \del(\End_l^{s_n,t_n} \cup \til B^{t_n}_l) = \End_l^{s_n}, \quad a=0,1,2,
        \end{cases}
    \end{align}
    where the boundary condition follows from the triangle inequality. Then (2) is a consequence of standard interior gradient estimates.
\end{proof}

\subsubsection{Mass-charge inequality}

We now estimate the mass-charge gap on the charged Jang manifold $(\Mj,\gj,\Ej)$ in terms of charged harmonic functions. Note that some non-negative remainder terms are retained, which are the object of study in \Cref{sec:mass-charge-angular-momentum}.

\begin{theorem}\label{thm:masscharge}
    Let $(M,g,k,\E)$ be a charged, asymptotically flat or asymptotically hyperboloidal initial data set that satisfies the charged dominant energy condition. If $(\Mj,\gj,\Ej)$ is the charged Jang graph given by \Cref{prop:jang} \& \ref{prop:jangelectricfield}, then there exist charged harmonic functions $u^1,u^2,u^3$ on $\Mj$ forming asymptotically flat coordinates on $\End_0$ such that the following holds for any $i=1,2,3$ and $\frac{1}{2}\leq\varepsilon\leq\frac{3}{2}$.
    \begin{align}\label{thm:masschargeinequality}
        \jang{C}\cdot\jang E - \lp 1- \tfrac{\varepsilon}{2} \rp |\jang\Q|
        \geq \frac{1}{16\pi} \int_{\Mj} \lp 1 - \tfrac{2\varepsilon}{3}\rp\frac{|\gradc^2 u^i|^2}{|\gradj u^i|}+ \varepsilon |\gradj u^i||\Ej|^2 + |\gradj u^i| \lp |h-k|^2 + \lp 2 - \tfrac{1}{\varepsilon} \rp |X|^2 \rp \, d\Vj,
    \end{align}
    Here, all norms are taken with respect to the Jang metric, $\jang E,\jang\Q$ are respectively the ADM energy and total charge of $\End_0$, and $\jang{C}=1$ or $\frac{1}{2}$ if $M$ is asymptotically flat or asymptotically hyperboloidal.
\end{theorem}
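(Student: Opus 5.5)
The plan is to follow the Bray–Kazaras–Khuri–Stern / Hirsch–Kazaras–Khuri level-set integral formula, adapted to the charged Laplacian. The computation is done on the capped manifolds $(\til M_{t_n},\gt_{t_n},\Et)$ with the charged harmonic functions $u_{t_n}$ from \Cref{lem:chargedharmonic}, and then passed to the limit $n\to\infty$. Contributions from the caps vanish because $|\gradt u_{t_n}|\to 0$ uniformly there, while the caps' geometry stays uniformly bounded.

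\textbf{Step 1: pointwise Bochner identity.} For a charged harmonic $u$, compute $\Delta|\grad u|$ away from critical points via Bochner:
\[
\tfrac12\Delta|\grad u|^2=|\grad^2u|^2+\Ric(\grad u,\grad u)+\langle\grad u,\grad\langle\E,\grad u\rangle\rangle .
\]
Rewrite $|\grad^2u|^2$ in terms of $|\gradc^2u|^2$ together with $\E$-terms. Then use the Gauss equation on level sets, $2\Ric(\nu,\nu)=R-R_\Sigma+H^2-|A|^2$ with $H$ computed from the charged equation: $H|\grad u|=-\grad^2u(\nu,\nu)+\langle\E,\grad u\rangle$. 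The aim is an identity of the form
\[
\Div\Big(\grad|\grad u|-|\grad u|\E+\dots\Big)=\frac{1}{2|\grad u|}\Big(|\gradc^2u|^2+(R-2|\E|^2)|\grad u|^2-R_\Sigma|\grad u|^2\Big)+(\text{$\E$-terms}).
\]
Into this I would substitute the Jang scalar curvature formula \eqref{eqn:jangscalarcurvature}:
\[
R_{\gj}-2|\Ej|^2\ \ge\ 2(|\E|_g^2-|\Ej|^2)+|h-k|^2+2|X|^2-2\Div X ,
\]
which uses the charged DEC, and $|\Ej|\le|\E|_g$ from \Cref{prop:jangelectricfield}.

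\textbf{Step 2: treating the $X$ and $\E$ terms.} For the divergence term, integrate $-|\grad u|\Div X$ by parts. This produces $\langle X,\grad|\grad u|\rangle$, which I would absorb by Young's inequality
\[
2|\langle X,\grad|\grad u|\rangle|\le \tfrac1\varepsilon|X|^2|\grad u|+\varepsilon\frac{|\grad|\grad u||^2}{|\grad u|}.
\]
This gives the $(2-\tfrac1\varepsilon)|X|^2$ coefficient. For the electric field terms, I would use the Kato inequality \Cref{lem:kato}, \eqref{lem:kato:eqn2}: it bounds $|\grad|\grad u||^2+2|\grad u|\Div(|\grad u|\E)+|\grad u|^2|\E|^2$ by $\tfrac23|\gradc^2u|^2$. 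Multiplying by $\varepsilon/|\grad u|$ turns the Young remainder into $\tfrac{2\varepsilon}{3}|\gradc^2u|^2/|\grad u|$, leaving $(1-\tfrac{2\varepsilon}3)$. It also leaves $\varepsilon|\grad u||\E|^2$ together with a divergence $\Div(|\grad u|\E)$, which becomes a boundary flux. The range $\frac12\le\varepsilon\le\frac32$ is exactly what keeps both coefficients nonnegative. I expect the main obstacle to be the bookkeeping here: matching the $\E$ cross terms from the charged Hessian against those in Bochner and Kato so that everything except $\varepsilon|\grad u||\Ej|^2$ either cancels or becomes a divergence.

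\textbf{Step 3: Gauss–Bonnet and boundary terms.} By the coarea formula, the term $\int|\grad u|R_\Sigma$ becomes $\int 2\pi\chi(\Sigma_s)\,ds$. Since $H_2(\Mj,\cup\End_l)=0$ (\Cref{table:jang}(1)), every regular level set has no closed components that could contribute positive Euler characteristic beyond those bounded via the Stern argument. So $\int\chi(\Sigma_s)\,ds\le$ the asymptotic contribution, as in \cite{bray2019harmonicfunctionsmass3dimensional}.

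On a large coordinate cylinder $C_\rho\subset\End_0$, the boundary terms give the following:
\begin{itemize}
\item $\grad|\grad u|$ and the Gauss–Bonnet contributions produce $8\pi\jang E$, up to the factor $\jang C$. In the hyperboloidal case this is combined with the flux $\int\langle X,\nu\rangle=-4\pi E_{\mathrm{ADM}}(\End_0)$ and $E_{\mathrm{ADM}}=2E_{\mathrm{hyp}}$, which produces $\jang C=\frac12$.
\item The flux of $|\grad u|\E$ with coefficient depending on $\varepsilon$ produces $-(1-\tfrac\varepsilon2)4\pi\jang\Q$ after using $|\grad u|\to1$. Choosing the sign of the coordinate $u^i$ appropriately (or of $\E$) gives $|\jang\Q|$.
\end{itemize}

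Finally, the cylindrical-end boundary terms vanish by \Cref{lem:chargedharmonic}(2),(3) together with the boundedness of $|X|$ and $|\E|$ there. Dividing by $16\pi$ and passing $n\to\infty$, $\rho\to\infty$ via Fatou yields \eqref{thm:masschargeinequality}.
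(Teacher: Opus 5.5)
Your architecture matches the paper's. You use the charged level-set identity, which the paper imports as \cite[(8.7)]{bray2021spacetimeharmonicfunctionsapplications} rather than rederiving it from Bochner and the Gauss equation. You substitute the Jang scalar curvature formula, using the charged dominant energy condition and $|\Ej|_{\gj}\le|\E|_g$. You integrate $-2|\gradj u|\Div_{\gj}X$ by parts, then apply Young's inequality with weight $\varepsilon$ followed by \eqref{lem:kato:eqn2}. You run Stern's Gauss--Bonnet argument on the capped manifolds using trivial $H_2$, and you dispose of the cap and cross-section terms via \Cref{lem:chargedharmonic}(2). Modulo the bookkeeping remarks at the end, the asymptotically flat case goes through as you describe.

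The genuine gap is the asymptotically hyperboloidal case. There $|X|_{\gj}=O(r^{-2})$ and $R_{\gj}-2|\E|_g^2$ is only $O(r^{-3})$ (Table~\ref{table:jang}(4)). So $R_{\gj}$ is not integrable on $\End_0$; only $R_{\gj}-2|\E|_g^2+2\Div_{\gj}X$ is (Table~\ref{table:jang}(3)).

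The Stern-type boundary analysis yields two fluxes over the coordinate \emph{cylinders} $C_\rho$: the ADM-type flux $\int_{C_\rho}\sum_i(\gj_{ij,i}-\gj_{ii,j})\nu^j\,d\Aj$ and the surviving term $\int_{C_\rho}2\langle X,\nu\rangle|\gradj u_n|\,d\Aj$. By contrast, $\jang E$ and the identity $\lim_{r\to\infty}\int_{S_r}\langle X,\nu\rangle\,d\Aj=-4\pi E_{\mathrm{ADM}}(\End_0)$ of Table~\ref{table:jang}(6) concern coordinate \emph{spheres}.

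On the region $D_\rho$ between $C_\rho$ and $S_{2\rho}$, the integrals $\int_{D_\rho}R_{\gj}$ and $\int_{D_\rho}\Div_{\gj}X$ are merely $O(1)$. Hence each cylinder flux can differ from its sphere counterpart by $O(1)$. You therefore cannot simply insert the values $8\pi\jang E$ and $-4\pi\jang E$ (in your normalization) on $C_\rho$.

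The missing step is the paper's:
\begin{itemize}
\item integrate over $D_\rho$;
\item write $R_{\gj}=(R_{\gj}-2|\E|_g^2+2\Div_{\gj}X)+2|\E|_g^2-2\Div_{\gj}X$;
\item use that the quadratic terms in $\partial\gj$ are $O(r^{-4})$ in charged harmonic coordinates;
\item invoke Table~\ref{table:jang}(3) to show that the \emph{combined} quantity, mass flux plus $2X$-flux, has the same limit on $C_\rho$ and $S_{2\rho}$.
\end{itemize}
Only then do the sphere limits give $16\pi\jang E-8\pi\jang E=8\pi\jang E$. That is the source of $\jang C=\tfrac12$. The relation $E_{\mathrm{ADM}}(\End_0)=2E_{\mathrm{hyp}}$ plays no role here; it is only used later, in the main theorem.

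Smaller bookkeeping points:
\begin{itemize}
\item The precise vector field in your Step 1 is what fixes the charge coefficient, and $\nabla|\nabla u|-|\nabla u|\E+\dots$ does not pin it down. In the identity the paper uses, the boundary integrand is $\partial_\nu|\gradj u|-\Delta u\,\frac{\gradj_\nu u}{|\gradj u|}+2|\gradj u|\langle\Ej,\nu\rangle$. The middle term is there because $\Delta u=\langle\Ej,\gradj u\rangle\neq0$. The coefficient $2$ is doubled and then reduced by $2\varepsilon$ from the Kato divergence, giving $(4-2\varepsilon)4\pi\jang\Q$ against $16\pi\jang E$, i.e.\ $1-\tfrac{\varepsilon}{2}$.
\item In your $8\pi\jang E$ normalization the charge term should therefore be $(2-\varepsilon)4\pi\jang\Q$, not $(1-\tfrac{\varepsilon}{2})4\pi\jang\Q$.
\item Replacing $u^i$ by $-u^i$ leaves $|\gradj u|\langle\Ej,\nu\rangle$ unchanged. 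To obtain $-(1-\tfrac{\varepsilon}{2})|\jang\Q|$ you must instead use the $\E\mapsto-\E$ invariance of the constraints and of the charged dominant energy condition.
\end{itemize}
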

\begin{proof}
    First consider the case when the initial data is asymptotically flat. Let $u^i$ be the charged harmonic functions on $(\Mj,\gj,\Ej)$ given by \Cref{lem:chargedharmonic} such that $u^i\to x^i$, where $x^1,x^2,x^3$ are asymptotically flat coordinates on $\End_0$. Then $U=(u^1,u^2,u^3)$ forms a coordinate system on $\End_0$ by the parallel transport argument of \cite[\S8.3]{bray2021spacetimeharmonicfunctionsapplications}. Fix $u=u^k$ and let $C_\rho\subset\End_0$ be the cylinder of radius $\rho>0$.
    \begin{align}
        C_\rho = \{U\,:\,|u^1|\leq \rho, \,(u^2)^2+(u^3)^2=\rho^2\} \cup \{ U\,:\,|u^1|=\rho, \, (u^2)^2+(u^3)^2\leq \rho^2\}
    \end{align}
    By \Cref{lem:chargedharmonic}, there exists a sequence $u_{t_n}$ of charged harmonic functions defined on the capped domains $\smash{(\til M_{t_n},\gt_{t_n})}$ satisfying \eqref{eqn:cappedchargedharmonic} with $\smash{a_i=\delta_i^k}$. For brevity, we shall write $u_n=u_{t_n}$, $\smash{(\til M_n, \gt_n)=(\til M_{t_n},\gt_{t_n})}$, and $\End_l^n = \End_l^{\pm t_n}$. Let $\Mj_{n\rho}$ be the region bounded between the levels $\{\End_l^n\}_{l=1}^{l_0}$ and the coordinate cylinder $C_\rho$ and take $\smash{\til M_{n\rho} = M_{n\rho} \bigcup_{l=1}^{l_0} \til B^{n}_l}$ to be the corresponding capped domain. We may apply the charged integral identity \cite[(8.7)]{bray2021spacetimeharmonicfunctionsapplications} on the capped domain to obtain
    \begin{equation}
        \begin{aligned}
            \int_{C_\rho} \del_\nu |u_n|_{\gt} - \til\Delta u_n \frac{\gradt_\nu u_n}{|\gradt u_n|_{\gt}} + 2|\gradt u_n|_{\gt}\langle \Et,\nu \rangle_{\gt} \, d\til A
            &\geq \frac{1}{2} \int_{\til M_{n\rho}} \Bigg[ \frac{|\gradc^2 u_n|_{\gt}^2}{|\gradt u_n|_{\gt}} + |\gradt u_n|_{\gt} \lp R_{\gt}-2|\Et|_{\gt}^2 \rp \\
            &\qquad\qquad\quad - 2 K |\gradt u_n|_{\gt} + 2 |\gradt u_n|_{\gt} \Div_{\gt} \Et \Bigg] \, d\til V,
        \end{aligned}
    \end{equation}
    where $\nu$ is the unit outer normal to $C_\rho$ and $K$ is the Gauss curvature of the level sets of $u_n$ in $\til M_{n\rho}$. According to \Cref{prop:jang} (1), the Jang surface satisfies $H_2(\til M_{n\infty};\Z)=0$, so we may apply \cite[(6.28)]{bray2019harmonicfunctionsmass3dimensional} to find
    \begin{align}
        \int_{C_\rho} \sum_{i=1}^3 (\gj_{ij,i}-\gj_{ii,j})\nu^j + 4|\gradj u_n|\langle \Ej,\nu\rangle \,d\Aj
        \geq \int_{\til M_{n\rho}} \frac{|\gradc^2 u_n|_{\gt}^2}{|\gradt u_n|_{\gt}} + |\gradt u_n|_{\gt} \lp R_{\gt}-2|\Et|_{\gt}^2 + 2 \Div_{\gt} \Et \rp \, d\til V + o(1),
    \end{align}
    where the remainder vanishes in the limit $\rho\to\infty$. Separating out the integral over the caps, we may use that the Jang electric field is divergence-free on the Jang surface to obtain
    \begin{equation}\label{thm:masscharge:step1}
        \begin{aligned}
            \int_{C_\rho} \sum_{i=1}^3 (\gj_{ij,i}-\gj_{ii,j})\nu^j + 4|\gradj u_n|\langle \Ej,\nu\rangle \,d\Aj
            &\geq \int_{\Mj_{n\rho}} \frac{|\gradc^2 u_n|^2}{|\gradj u_n|} + |\gradj u_n| \lp R_{\gj}-2|\E|_g^2 \rp \, d\Vj \\
            &\quad + \sum_{l=1}^{l_0} \underbrace{\int_{\til B_l^n} |\gradt u_n|_{\gt} \lp R_{\gt}-2|\Et|_{\gt}^2 + 2 \Div_{\gt} \Et \rp \, d\til V}_{\equiv\I_{ln}^1}, 
        \end{aligned}
    \end{equation}
    where the original electric field appears in the first integral after applying \Cref{prop:jangelectricfield}. It follows from \Cref{lem:chargedharmonic} (2), the uniformly bounded geometry of the caps, and the uniform bounds on $|\Et|_{C^0(\til M_n)},|\Et|_{C^1(\til M_n)}$ that $\lim_{n\to\infty} \I^1_{ln} = 0$. Now apply the Jang scalar curvature formula \eqref{eqn:jangscalarcurvature} on the right-hand side and integrate the divergence terms by parts to get
    \begin{equation}\label{thm:masscharge:step2}
        \begin{aligned}
            \int_{\Mj_{n\rho}} \frac{|\gradc^2 u_n|^2}{|\gradj u_n|} + |\gradj u_n|\lp |h-k|^2 + 2|X|^2\rp - 2 \langle X,\gradj|\gradj u_n|\rangle \, d\Vj \\
            - \sum_{l=1}^{l_0} \underbrace{\int_{\End_l^n} 2\langle X,\upsilon \rangle |\gradj u_n| \, d\Aj}_{\equiv \I^2_{ln}}
            - \underbrace{\int_{C_\rho} 2\langle X,\nu \rangle |\gradj u_n| \, d\Aj}_{\equiv \I^3_{n\rho}} 
        \end{aligned}
    \end{equation}
    where $\upsilon$ is the unit outer normal to $\End_l^n$. Then \Cref{lem:chargedharmonic} (2) and \Cref{prop:jang} (7) imply $\lim_{n\to\infty} \I^2_{ln} = 0$. In addition, the proof of \Cref{lem:chargedharmonic} shows $|\gradj u_n|$ is bounded independent of $n$ and $\rho$, while \Cref{prop:jang} (6) gives $|X|_{\gj}=O(r^{-1-2q})$. Since $|\rho-r|=o(1)$ as $\rho\to\infty$, it follows that $\I^3_{n\rho} \to 0$ uniformly in $n$ as $\rho\to\infty$. The remaining inner product may be estimated by Young's inequality and \Cref{lem:kato}.
    \begin{equation}
        \begin{aligned}\label{thm:masscharge:step3}
            |\langle X,\gradj|\gradj u_n|\rangle|
            &\leq \frac{\varepsilon}{2}\frac{|\gradj|\gradj u_n||^2}{|\gradj u_n|} + \frac{1}{2\varepsilon}|\gradj u_n||X|^2 \\
            &\leq \frac{\varepsilon}{3} \frac{|\gradc^2 u_n|^2}{|\gradj u_n|} - \varepsilon\Div_{\gj}(|\gradj u_n|\Ej) - \frac{\varepsilon}{2}|\gradj u_n||\Ej|^2 + \frac{1}{2\varepsilon}|\gradj u_n||X|^2 
        \end{aligned}
    \end{equation}
    Hence \eqref{thm:masscharge:step2} may be bounded below by
    \begin{align}
        \int_{\Mj_{n\rho}} \lp 1 - \tfrac{2\varepsilon}{3}\rp\frac{|\gradc^2 u_n|^2}{|\gradj u_n|} + \varepsilon |\gradj u_n||\Ej|^2 + |\gradj u_n| |h-k|^2 + \lp 2 - \tfrac{1}{\varepsilon} \rp |\gradj u_n| |X|^2 + 2\varepsilon \Div_{\gj}(|\gradj u_n|\Ej) \,d\Vj.
    \end{align}
    By the divergence theorem, the last term is given by
    \begin{align}\label{thm:masscharge:step4}
        \int_{\Mj_{n\rho}} 2\varepsilon \Div_{\gj}(|\gradj u_n|\Ej) \, d\Vj
        = \int_{C_\rho} 2\varepsilon|\gradj u_n|\langle\Ej,\nu\rangle\,d\Aj
        - \sum_{l=1}^{l_0} \underbrace{\int_{\End_l^n} 2\varepsilon|\gradj u_n|\langle\Ej,\nu\rangle\,d\Aj}_{\equiv\I_{ln}^4},
    \end{align}
    where \Cref{prop:jangelectricfield} and \Cref{lem:chargedharmonic} (2) imply $\lim_{n\to\infty}\I_{ln}^4=0$. We may combine the estimates \eqref{thm:masscharge:step1}--\eqref{thm:masscharge:step4} and apply Fatou's lemma in the resulting inequality to take the limit $\rho\to\infty$ and liminf as $n\to\infty$. This concludes the proof in the asymptotically flat case.

    Now suppose the initial data is asymptotically hyperboloidal. The previous argument works, with slight adjustments to accommodate weaker decay in the Jang triple. Let $x^i$ be the coordinates used in \cite[Corollary 6.11]{sakovich2021jangequationpositivemass} and let $u^1,u^2,u^3$ be the corresponding charged harmonic functions solving \eqref{eqn:cappedchargedharmonic}. Since $|u^i-x^i|=O_2(r^0)$, either $x^i$ or $u^i$ may be used to compute the ADM energy. As before, fix $u=u^k$ and let $u_n$ be the approximating sequence. Then we may apply \eqref{thm:masscharge:step1}--\eqref{thm:masscharge:step4} to get
    \begin{align}
        \int_{C_\rho} \sum_{i=1}^3 (\gj_{ij,i}-\gj_{ii,j})\nu^j + (4-2\varepsilon)|\gradj u_n|\langle \Ej,\nu\rangle \,d\Aj
        &\geq \int_{\Mj_{n\rho}} \Bigg[\!\lp 1 - \tfrac{2\varepsilon}{3}\rp\frac{|\gradc^2 u_n|^2}{|\gradj u_n|}+ \varepsilon |\gradj u_n||\Ej|^2 + |\gradj u_n||h-k|^2 \nonumber \\
        &\qquad + \lp 2 - \tfrac{1}{\varepsilon} \rp |\gradj u_n| |X|^2 \Bigg] \,d\Vj
        + \sum_{l=1}^{l_0} \lp \I_{ln}^1 - \I_{ln}^2 - \I_{ln}^4 \rp - \I_{n\rho}^3. \label{thm:masscharge:eqn4}
    \end{align}
    For the same reasons as before, $\lim_{n\to\infty} \I^1_{ln}-\I^2_{ln} = 0$. On the other hand, since \Cref{prop:jang} (6) gives $|X|_{\gj}=O(r^{-2})$, the boundary integral $\I^3_{n\rho}$ has a nonzero limit, given by
    \begin{align}
        \I_{n\rho}^3 = \int_{C_\rho} 2\langle X,\nu \rangle |\gradj u_n| \, d\Aj
        = \int_{C_\rho} 2\langle X,\nu \rangle \, d\Aj + o(1).
    \end{align}
    Recall the following coordinate expression for scalar curvature
    \begin{align}
        R_{\gj} = \del_j \lp\gj^{ij} \gj^{kl} ( \gj_{li,k} - \gj_{lk,i})\rp + Q(\del\gj),
    \end{align}
    where $Q(\del\gj)$ is quadratic in the first derivatives of the metric. In charged harmonic coordinates, we have $Q(\del\gj)=O(r^{-4})$. Thus on the region $D_\rho$ bounded between the cylinder $C_\rho$ and the sphere $S_{2\rho}$, we may integrate by parts to get
    \begin{equation}
        \begin{aligned}
            \int_{\del D_\rho} \sum_{i=1}^3 (\gj_{ij,i}-\gj_{ii,j})\nu^j \, d\Aj
            &= \int_{D_\rho} R_{\gj} \, d\Vj + o(1) \\
            &= \int_{D_\rho} R_{\gj} + 2\Div_{\gj} X \, d\Vj - \int_{S_{2\rho}} 2\langle X,\nu\rangle \, d\Aj + \int_{C_\rho} 2\langle X,\nu\rangle \, d\Aj + o(1) \\
            &= - \int_{S_{2\rho}} 2\langle X,\nu\rangle \, d\Aj + \I_{n\rho}^3 + o(1),
        \end{aligned}
    \end{equation}
    where the last equality uses the integrability of $R_{\gj}+2\Div_{\gj}X$ given by \Cref{prop:jang} (3). Combined with \eqref{thm:masscharge:eqn4}, we have
    \begin{align}
        \int_{S_{2\rho}} \sum_{i=1}^3 (\gj_{ij,i}-\gj_{ii,j})\nu^j + 2\langle X,\nu\rangle \, d\Aj + \int_{C_\rho} (4-2\varepsilon)|\gradj u|\langle \Ej,\nu\rangle  \, d\Aj
        &\geq \int_{\Mj_{n\rho}} \lp 1 - \tfrac{2\varepsilon}{3}\rp\frac{|\gradc^2 u_n|^2}{|\gradj u_n|}+ \varepsilon |\gradj u_n||\Ej|^2 \nonumber \\
        &\qquad\quad + |\gradj u_n| \Big\{ |h-k|^2 + \lp 2 - \tfrac{1}{\varepsilon} \rp |X|^2 \Big\} d\Vj.
    \end{align}
    According to \Cref{prop:jang} (6), the flux of $2X$ converges to $-8\pi E_\mathrm{ADM}(\End_0)$ in the limit $\rho\to\infty$. The desired inequality by using Fatou's lemma to take the limit $r\to\infty$ and the liminf as $n\to\infty$.
\end{proof}

\section{Mass-charge-angular momentum inequality}\label{sec:mass-charge-angular-momentum}

Let $(M,g,k)$ be an axially symmetric initial data set with axial Killing field $\eta$. The \emph{Komar angular momentum} of a surface $\Sigma\subset M$ is given by 
\begin{align}
    \J(\Sigma) = \frac{1}{8\pi} \int_\Sigma k(\eta,\nu) \, dA_g,
\end{align}
where $\nu$ is the outer unit normal to $\Sigma$. The conservation property \eqref{cond:conservationofangularmomentum} and the asymptotic conditions in \eqref{cond:AF_asymptotics}, \eqref{cond:AH_asymptotics} imply that $\J(\Sigma)$ only depends on the homology class of $\Sigma$. In particular, the Komar momentum of any surface homologous to a sufficiently large coordinate sphere in the asymptotic end recovers the total angular momentum. 

If $\Sigma$ is axisymmetric, then the orbit of $\eta$ passing through $p\in\Sigma$ has circumference $2\pi|\eta(p)|$. Thus the quantity $\int_\Sigma|\eta|^2\,dA$ reflects the distribution of area of an axisymmetric surface relative to the axis of symmetry. For any foliation $\{\Sigma_t\}_{t\geq 0}$ of an end by axisymmetric surfaces, define the \emph{axial radius} of $\Sigma_0$ to be
\begin{align}\label{def:radius}
    \mathcal{R}_\eta(\Sigma_0) = \lp \int_0^\infty \frac{8\pi}{\int_{\Sigma_t} |\eta|_g^2 \, dA_t} \, dt \rp^{-1/3}.
\end{align}
The normalizing factor is chosen here so that if $\{S_r\}_r\subset\R^3$ consists of round spheres parametrized by their radius, then $\mathcal{R}_\eta(S_r)=r$. The definition \eqref{def:radius} is similar to the length $\mathcal{R}$ introduced in \cite{anglada2018,anglada2020}, which recovers for an inverse mean curvature flow $\{\Sigma_t\}_t$ the areal radius $\mathcal{R}(\Sigma_t) = \sqrt{|\Sigma_t|/4\pi}$ whenever the flow is smooth and spherical. 

Our first step is to construct coordinates adapted to the axisymmetry of the manifold while preserving the asymptotic conditions of the end. In particular, we shall require the coordinate spheres to be axisymmetric.

\begin{lemma}\label{lem:axisymmetriccoordinates}
    Let $(M,g,k,\E)$ be a charged, axisymmetric initial data set that is either asymptotically flat or asymptotically hyperboloidal with axial Killing field $\eta$. After possibly restricting $\Mend$ to a smaller asymptotic end, there exist $R_0>0$ and coordinates $(\ax r,\ax\theta,\ax\phi)$ on $\Mend\cong(R_0,\infty)\times S^2$ such that:
    \begin{enumerate}
        \item The Killing field is given by $\eta=\del_{\ax\phi}$.
        \item If the initial data is asymptotically flat, then it satisfies \eqref{cond:AF_asymptotics} in the coordinates
        \begin{align*}
            \ax x^1=\ax r\sin\ax\theta\cos\ax\phi, \qquad
            \ax x^2=\ax r\sin\ax\theta\sin\ax\phi, \qquad 
            \ax x^3=\ax r\cos\ax\theta.
        \end{align*}
        \item If the initial data is asymptotically hyperboloidal, then it satisfies \eqref{cond:AH_asymptotics} in the coordinates $\smash{(\ax r,\ax\theta,\ax\phi)}$, with the error term in the first equation replaced by $O_5(\ax{r}^{-2})$. Moreover, the associated total energy $\ax{E}$ satisfies $\ax{E}\leq E$.
    \end{enumerate}
\end{lemma}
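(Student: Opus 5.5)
The plan is to first straighten out the Killing field at infinity, and then correct the coordinates so that the $U(1)$ action becomes exactly a rotation.

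\emph{Step 1: asymptotic form of $\eta$.} In the given asymptotic coordinates $x^i$, write the Killing equation $\mathcal L_\eta g=0$ with $g=\delta+O_2(|x|^{-q})$. This says that $\eta$ is an asymptotic Euclidean Killing field. A standard argument (as in Bartnik, or the analysis of Killing fields on AF ends by Beig--Chru\'sciel) then gives
\begin{align*}
    \eta = \Lambda x + c + O(|x|^{1-q}),
\end{align*}
with $\Lambda$ antisymmetric and $c$ constant. The key input is that $\nabla^2\eta = \mathrm{Rm}*\eta$, which forces the second derivatives to decay. Since the orbits of $\eta$ are closed, $\eta$ cannot be a translation, so $\Lambda\neq0$. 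After a rigid Euclidean motion (and rescaling, using the $2\pi$-periodicity of the flow), we may take $\Lambda x = x^1\partial_2 - x^2\partial_1$. A translation then removes $c$ up to a bounded error in $\partial_3$. In the hyperboloidal case the same procedure applies, with the isometries of $b$, i.e.\ Lorentz transformations, in place of Euclidean motions. Boosts alter $(\mathbf m,\mathbf p)$ by a Lorentz transformation of the energy-momentum vector, which is the source of the inequality $\ax E\le E$: we should choose the frame in which $P_{\mathrm{hyp}}$ is aligned with the axis or removed, so that the energy does not increase.

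\emph{Step 2: averaging to get exact rotation.} Let $\Phi_s$ denote the flow of $\eta$, which is $2\pi$-periodic and acts by isometries. Consider the Euclidean rotation $\rho_s$ about the $x^3$-axis, and the maps $\Phi_{-s}\circ\rho_s$ (up to the identification of $\Mend$ with $\R^3$ minus a ball). By Step 1 these are close to the identity, with error $O(|x|^{1-q})$. Averaging the coordinate functions is the key device. Define
\begin{align*}
    \ax x = \frac{1}{2\pi}\int_0^{2\pi}\rho_{-s}\big(x\circ\Phi_s\big)\,ds .
\end{align*}
Then $\ax x\circ\Phi_t = \rho_t\,\ax x$ exactly, so $\eta=\partial_{\ax\phi}$ in the associated spherical coordinates. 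Moreover $\ax x - x = O_2(|x|^{1-q})$, and derivatives are controlled because $\Phi_s$ are isometries and the $C^l$ norms of $x\circ\Phi_s$ are uniformly bounded. The map $x\mapsto\ax x$ is therefore a diffeomorphism on a smaller end $\{|x|>R_0\}$.

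\emph{Step 3: decay check.} A change of coordinates of the form $\ax x = x+O_2(|x|^{1-q})$ preserves $g_{ij}=\delta_{ij}+O_2(|x|^{-q})$. The decay of $k$, $\mu_E$, $J$, $\E$ and $\tr_g k$ transfers directly, since these are tensors and the Jacobian is $I+O_1(|x|^{-q})$. For the hyperboloidal case, the error is $O(r^{-1})$ relative to $b$-length scales. The expansion $g=b+\mathbf m/r+\dots$ is preserved up to the loss of one derivative, which explains the $O_5$ in place of $O_6$. The energy computed from $\tr_\sigma(\mathbf m+2\mathbf p)$ is unchanged under such coordinate changes, so only the boost from Step 1 affects it.

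The main obstacle is Step 1 in the hyperboloidal setting. There we must show that the asymptotic isometry induced by $\eta$ is a rotation conjugated by a Lorentz boost, and then verify the energy inequality. The approach would be to use that $(m,P)$ transforms as a Minkowski vector, with $E_{\mathrm{hyp}}\ge |P_{\mathrm{hyp}}|$ by the positive mass theorem, and pick the boost minimizing the energy compatible with the axial symmetry.
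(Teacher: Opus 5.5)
Your proposal has a genuine gap in the asymptotically hyperboloidal case; the asymptotically flat case works as an alternative route once a derivative count is fixed.

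\textbf{The hyperboloidal case.} Step 3 is wrong there.

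\begin{itemize}
\item A coordinate change whose displacement is $O(r^{-1})$ in $b$-length perturbs the metric by $O(r^{-1})$ in $b$-norm. For example, $r\mapsto r+c$ turns $r^2\sigma$ into $r^2\sigma+2cr\sigma+\dots$.
\item By contrast, $\mathbf m/r$ has relative size only $r^{-3}$ compared with $b$. The Wang form also requires this leading correction to be a tensor on $S^2$, with everything else of strictly lower order.
\item An error of the size you allow therefore destroys the expansion. It also invalidates your claim that the mass aspect and $E_{\mathrm{hyp}}$ are unchanged.
\end{itemize}

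To make averaging work, you would have to prove that, in a Wang chart adapted to the rotated boundary metric, $\Phi_s$ agrees with $\rho_s$ to relative order about $r^{-4}$, with derivatives.

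You would also have to justify Step 1 itself, which you only assert. This needs the isometries to extend to conformal maps of $\partial_\infty M$ (the paper uses Chru\'sciel--Herzlich). They then form a compact one-parameter subgroup of $\mathrm{Conf}^+(S^2)\cong\mathrm{Isom}^+(\HS)$, and Cartan's fixed point theorem conjugates it into rotations about an axis.

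The paper avoids averaging altogether:
\begin{itemize}
\item It takes the geodesic defining function $\ax\varrho$ of $g$ for the round representative $\ax\sigma=\Lambda^*\sigma$ (Wang's Lemma 2.2).
\item Uniqueness of $\ax\varrho$ forces $\ax\varrho\circ\Psi_t=\ax\varrho$.
\item It then transports the angular coordinates along $\grad_{\ax g}\ax\varrho$, whose flow commutes with the action.
\end{itemize}
This gives $\eta=\partial_{\ax\phi}$ exactly and Wang normal form at the same time. That uniqueness argument is the missing idea.

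\textbf{The asymptotically flat case.} Your averaging is a valid alternative to the paper's appeal to Chru\'sciel's Brill coordinates, but the derivative count is off. A change $\ax x=x+O_2(|x|^{1-q})$ only preserves $g=\delta+O_1$, so you need $O_3$. You can get it as follows. Each $z_s=\rho_{-s}(x\circ\Phi_s)$ is a chart in which $g$ has the same decay, since $\Phi_s$ is an isometry. The transformation law $\partial^2 z=\Gamma*\partial z-\Gamma'(\partial z,\partial z)$ then gives $z_s-x-c_s=O_3(|x|^{1-q})$ uniformly in $s$.

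\textbf{The energy inequality.} A ``minimizing boost'' need not exist. If $E'=|P'_3|>0$, the infimum over boosts along the axis is $0$ and is not attained. Still, any sufficiently strong boost in the direction of $P'_3$ gives $\ax E\le E$. So the non-strict positive mass theorem suffices. This would let you skip the paper's detour proving $E>|P|$ via rigidity and the absence of bounding MOTS in Minkowski space.
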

\begin{proof}
    We first consider the asymptotically flat case. After restricting to an asymptotic end $\Mend$, we may apply the result of \cite{chrusciel2008massangularmomentuminequalities}, which gives Brill coordinates $(\rho,\phi,z)$ such that $\eta=\del_\phi$ and
    \begin{align}\label{eqn:brill}
        g = e^{2(\alpha-U)}(d\rho^2+dz^2) + \rho^2e^{-2U}(d\phi+A_\rho d\rho + A_zdz)^2,
    \end{align}
    where $\alpha,U,A_\rho,A_z$ depend only on $(\rho,z)$ and satisfy
    \begin{align}\label{eqn:brillasymptotics}
        \alpha, U = O_1(r^{-q}), \qquad A_\rho,A_z = O_2(r^{-q-1}), \qquad r=\sqrt{\rho^2+z^2.}
    \end{align}
    Define $\ax r = r$, $\ax\theta = \arccos{\frac{z}{r}}$, $\ax\phi=\phi$. Then the Brill asymptotics imply that the associated Cartesian coordinates $(\ax{x}^1,\ax{x}^2,\ax{x}^3)$ form an asymptotically flat coordinate system on the end.

    Now suppose $(M,g)$ is asymptotically hyperboloidal. Let $\{\Psi_t\}_{t\in S^1}$ denote the one-parameter family of isometries generated by $\eta$, with $\Psi_0 = \text{Id}$. Introduce the defining function $\varrho =2(r+\sqrt{1+r^2})^{-1}$. The asymptotic expansion for $g$ in \eqref{cond:AH_asymptotics} may then be written as
    \begin{align}\label{lem:axisymmetriccoordinates:compactification}
        g = \frac{1}{\varrho^2} \ls d\varrho^2 + \lp 1 - \frac{\varrho^2}{4} \rp^2 \sigma + \varrho^3 \mathbf{m} + O_6(\varrho^4) \rs.
    \end{align}
    Thus $(M,g)$ admits a conformal compactification obtained by adjoining the sphere $\{\varrho=0\}\equiv \del_\infty M$, whose induced conformal representative is the standard round metric $\sigma$. Each $\Psi_t$ extends to the conformal boundary and its restriction $\Psi_t^\del\equiv\Psi_t\vert_{\del_\infty M}$ is conformal with respect to $\sigma$ by \cite[Theorem 6.1]{chrusciel2003massasymptoticallyhyperbolicriemannian}; see Remark 6.2 there regarding compactifications of finite regularity.

    Fix an orientation on $\del_\infty M$. Since $\det{d\Psi_t^\del}$ depends continuously on $t$ and $\det{d\Psi^\del_0}=1$, each $\Psi^\del_t$ is orientation preserving. Then since $\{\Psi_t^\del\}_t$ is the image of a continuous homomorphism on $S^1$, it is a compact subgroup of $\mathrm{Conf}^+(S^2) \cong \mathrm{Isom}^+(\mathbb{H}^3)$, where $\mathrm{Conf}^+(S^2)$ consists of the orientation-preserving conformal transformations of $S^2$ and $\mathrm{Isom}^+(\mathbb{H}^3)$ consists of the orientation-preserving isometries of $\HS$. By Cartan's fixed point theorem, this subgroup fixes some point $q\in\HS$. Choose a hyperbolic isometry $A$ satisfying $A(q)=0$. Then $\{A\Psi_t^\del A^{-1}\}_t\subset\mathrm{SO}(3)$. Since every one-dimensional compact subgroup of $\mathrm{SO}(3)$ consists of rotations about a fixed axis, after composing $A$ with an ordinary rotation if necessary, we may assume that $\{A\Psi_t^\del A^{-1}\}_t$ consists of rotations about the $z$-axis. 

    Let $\Phi$ denote the original asymptotically hyperboloidal chart. In the straightened chart $A\circ\Phi$, write the energy-momentum vector as $(E',P')$. The rotational symmetry about the $z$-axis implies that $P'=(0,0,P'_3)$. We claim the original energy-momentum vector is timelike, $E>|P|$. Observe that the charged dominant energy condition implies the uncharged dominant energy condition, since $\mu\equiv\mu_E+\smash{\frac{|\E|^2}{8\pi}}\geq|J|$. Hence if $E=|P|$, the rigidity statement for the hyperboloidal positive mass theorem \cite{hirsch2026hyperboloidalspacetimepositivemass} implies that $(M,g,k)$ isometrically embeds as a spacelike hypersurface of Minkowski space. Such a hypersurface is diffeomorphic to $\R^3$ by \cite[Proposition 9.3]{alias2016maximum}. If $\Sigma$ is a connected component of an apparent horizon in $M$, then $\Sigma$ is in particular a closed, two-sided, embedded surface in $\R^3$, and hence separates $\R^3$ by the Jordan-Brouwer separation theorem. This contradicts \cite[Corollary 2]{carrasco2008marginallyoutertrappedsurfaces}, which states that an initial data set for Minkowski space cannot contain a bounding MOTS/MITS. Thus the total mass of $\Mend$ is strictly positive. Moreover, since mass is invariant under Lorentz transformations, $m=m'=\sqrt{(E')^2-(P'_3)^2}>0$.

    Take the asymptotically hyperboloidal chart on $\Mend$ and apply a boost $B_\beta$ in the $z$-direction with speed $|\beta|<1$. Since boosts in the $z$-direction commute with rotations about the $z$-axis, the rotational symmetry of the energy-momentum vector is preserved. The energy-momentum components in the boosted chart $B_\beta\circ A\circ\Phi$ are given by
    \begin{align}
        \ax{E}=\frac{E'-\beta P'_3}{\sqrt{1-\beta^2}}, \qquad \ax{P}_3=\frac{P'_3-\beta E'}{\sqrt{1-\beta^2}}.
    \end{align}
    By the previous paragraph. Hence we may choose $\beta=P'_3/E'$, which gives $\ax{E}=m$, $\ax{P}_3=0$. In particular, the energy in this chart is no larger than the original energy.

    Set $\Lambda=B_\beta\circ A$ and let $\ax\sigma=\Lambda^*\sigma$ denote the corresponding round representative of the conformal metric at infinity. Since $\Lambda\Psi_t^\del \Lambda^{-1}$ is a rotation, $\Psi_t^\del$ is an isometry of $(\del_\infty M,\ax\sigma)$.
    \begin{align}
        (\Psi_t^\del)^*\ax\sigma
        = (\Lambda\Psi_t^\del)^*\sigma
        = \Lambda^*(\Lambda\Psi_t^\del \Lambda^{-1})^*\sigma
        = \ax\sigma
    \end{align}
    By \cite[Lemma 2.2]{wang2001massasymptoticallyhyperbolicmanifolds}, there is a unique defining function $\ax\varrho$ satisfying
    \begin{align}\label{lem:axisymmetriccoordinates:definingfunction}
        |d\ax\varrho|_{\ax g}=1, \qquad
        \ax g\vert_{T\del_\infty M}=\ax\sigma,
    \end{align}
    where $\ax{g}=\ax{\varrho}^2 g$. For each $t$, set $\varrho_t=\ax\varrho\circ\Psi_t$. Since $\Psi_t$ is an isometry of $g$,
    \begin{align}
        |d\varrho_t|^2_{\varrho_t^2 g}
        = |\Psi_t^*d\ax\varrho|^2_{\Psi_t^*\ax g}
        = |d\ax\varrho|^2_{\ax g}\circ\Psi_t
        = 1.
    \end{align}
    Moreover, at conformal infinity we have
    \begin{align}
        (\varrho_t^2 g)\vert_{T\del_\infty M}
        = (\Psi_t^*\ax g) \vert_{T\del_\infty M}
        = (\Psi_t^\del)^*\ax\sigma
        = \ax\sigma.
    \end{align}
    Thus $\varrho_t$ satisfies the same defining properties as $\ax\varrho$, and by uniqueness,
    \begin{align}\label{lem:axisymmetriccoordinates:uniqueness}
        \varrho_t = \ax\varrho\circ\Psi_t = \ax\varrho.
    \end{align}
    Differentiating at $t=0$ yields $\eta(\ax\varrho)=0$. Consequently, if $\ax r=\frac{1}{\ax\varrho}-\frac{\ax\varrho}{4}$, then we also have $\eta(\ax r)=0$. 

    It remains to construct the symmetry-adapted angular coordinates. By \eqref{lem:axisymmetriccoordinates:definingfunction}, the gradient vector field $N=\grad_{\ax{g}}\ax\varrho$ is the inward unit normal to the level sets of $\ax\varrho$ and its flow $\{F_s\}_{0\leq s<\varepsilon}$ defines a diffeomorphism between $[0,\varepsilon)\times S^2$ and a collared neighborhood $\mathcal{W}$ of conformal infinity. Let $\smash{(\ax\theta,\ax\phi)=(\theta\circ\Lambda,\phi\circ\Lambda)}$ be angular coordinates at the boundary and extend them into $\mathcal{W}$ by requiring them to remain constant along the flow lines of $N$. It follows from \eqref{lem:axisymmetriccoordinates:uniqueness} that $\Psi_t$ is an isometry of $\ax{g}$ and hence satisfies $d\Psi_t(N)=N$. By the uniqueness of integral curves, the normal flow commutes with the axial action, $\Psi_t \circ F_s = F_s\circ\Psi^\del_t$. Thus throughout $\mathcal{W}$, we have
    \begin{align}
        \Psi_t(\ax\varrho,\ax\theta,\ax\phi)=(\ax\varrho,\ax\theta,\ax\phi+t).
    \end{align}
    Differentiating $\Psi_t$ at $t=0$ gives $\smash{\eta=\del_{\ax\phi}}$. We next verify that the symmetry-adapted coordinates $\smash{(\ax{r},\ax\theta,\ax\phi)}$ form asymptotically hyperboloidal coordinates on $\mathcal{W}\subset\Mend$. The computation of \cite[Section 3]{wang2001massasymptoticallyhyperbolicmanifolds} shows that the asymptotic class for $g$ in \eqref{cond:AH_asymptotics} is preserved under a change of the conformal representative at infinity. Hence there is a symmetric 2-tensor $\ax{\mathbf{m}}$ on $S^2$ such that $\smash{g=\ax{b}+\frac{\ax{\mathbf{m}}}{\ax r}+O_5(\ax r^{-2})}$ as $\ax{r}\to\infty$, where $\smash{\ax{b}=\frac{d\ax r^2}{1+\ax r^2}+\ax r^2\sigma}$. 
    
    Since $\varrho$ and $\ax\varrho$ are defining functions whose boundary metrics are conformally related, there exists a function $\omega$ such that $\ax\varrho=e^\omega\varrho$. By \eqref{lem:axisymmetriccoordinates:compactification}, $|d\varrho|_{\varrho^2g}^2=1+O(\varrho^4)$. Expanding the geodesic defining function equation $|d\ax\varrho|_{\ax{g}}^2=1$ therefore gives 
    \begin{align}
        1 
        = |d\ax\varrho|_{\ax g}^2 
        = |\varrho d\omega+d\varrho|_{\varrho^2g}^2
        = \varrho^2|d\omega|_{\varrho^2g}^2 + 2\varrho\del_\varrho \omega + 1 + O(\varrho^4).
    \end{align}
    Since $d\omega$ remains bounded, we may divide by $\varrho$ and evaluate at $\varrho=0$ to get $\del_\varrho\omega|_{\varrho=0}=0$. Thus
    \begin{align}\label{lem:axisymmetriccoordinates:varrhoexpansion}
        \ax\varrho=e^{\omega_0}\varrho+O(\varrho^3) \quad\text{as}\quad \varrho\to 0,
    \end{align}
    where $\omega_0=\omega\vert_{\varrho=0}$ only depends on $(\theta,\phi)$. From the definitions of $r$ and $\ax{r}$,
    \begin{align}
        \frac{1}{r}=\frac{1}{\ax{r}e^{\omega_0}}+O(\ax\varrho^3).
    \end{align}
    Hence there exists $C\geq 1$ such that $\frac{1}{C} r\leq\ax r\leq Cr$ for all sufficiently large $r$. It follows that $\mu_E,J=O_3(\ax{r}^{-3-q_0})$, $\E=O_1(\ax{r}^{-2})$. To compare the angular coordinates, write the normal field $N$ in the original coordinates $(\varrho,\theta,\phi)$. Using \eqref{lem:axisymmetriccoordinates:compactification}, we have
    \begin{align}
        N
        =\grad_{\ax g}\ax\varrho
        =e^{-2\omega}\grad_{\varrho^2g}(e^\omega\varrho)
        =e^{-\omega}\lp\del_\varrho+\varrho\grad_{\varrho^2 g}\omega + O(\varrho^4)\rp.
    \end{align}
    Thus the component of $N$ tangent to the $S^2$ factor is $O(\varrho)$. Along an integral curve of $N$,
    \begin{align}
        \frac{\del\theta}{\del\ax\varrho}=\frac{\del\phi}{\del\ax\varrho}=O(\ax\varrho).
    \end{align}
    Integrating from the boundary relates $y=(\theta,\phi)$ and $\ax{y}=(\ax\theta,\ax\phi)$ by
    \begin{align}\label{lem:axisymmetriccoordinates:angularexpansion}
        y=\ax{y}\circ\Lambda^{-1}+O(\ax\varrho^2).
    \end{align}
    Then differentiating yields
    \begin{align}\label{lem:axisymmetriccoordinates:angulardifferentialexpansion}
        dy^i
        =(\Lambda^{-1})^*(d\ax{y}^i) + O(\ax\varrho^2)(d\ax\theta+d\ax\phi)+O(\ax\varrho)d\ax\varrho.
    \end{align}
    Lastly, the original asymptotics give $k-g=\frac{\mathbf{s}}{r}+O(r^{-2})$, where $\mathbf{s}=\mathbf{p}-\mathbf{m}$ is a symmetric 2-tensor on $S^2$. Combining \eqref{lem:axisymmetriccoordinates:varrhoexpansion} and \eqref{lem:axisymmetriccoordinates:angulardifferentialexpansion}, we obtain
    \begin{align}
        \frac{\mathbf{s}}{r}
        = \frac{(\Lambda^{-1})^*\mathbf{s}}{\ax{r}e^{\omega_0}} + O(\ax{r}^{-2}).
    \end{align}
    The required decay of the remainder term may be established by carefully tracking the regularity loss through the preceding coordinate changes. Setting $\smash{\mathbf{\ax{s}}=e^{-\omega_0}(\Lambda^{-1})^*\mathbf{s}}$ and $\mathbf{\ax{p}}=\mathbf{\ax{m}}+\mathbf{\ax{s}}$ gives $\smash{k=\ax{b}+\frac{\mathbf{\ax{p}}}{\ax{r}}+O_4(\ax{r}^{-2})}$, completing the proof. 
\end{proof}

Let $(\ax r,\ax\theta,\ax\phi)$ be the symmetry-adapted coordinates from \Cref{lem:axisymmetriccoordinates}. If $(M,g)$ is asymptotically flat, the asymptotics of the metric imply
\begin{align}
    |\eta|_g^2 = \ax r^2 \sin^2{\ax\theta}(1+O(\ax r^{-q})) \quad \text{as} \quad\ax r\to\infty,
\end{align}
along the end. The axial radius of $S_{\ax{r}}=\{\ax{r}=\text{const}\}$ is then related to the coordinate radius by
\begin{align}
    \frac{\mathcal{R}_\eta(S_{\ax r})}{\ax r} = 1 + O(\ax r^{-q}),
\end{align}
where $q>1/2$. Similarly, in the asymptotically hyperboloidal setting, the Wang asymptotics imply
\begin{align}
    |\eta|_g^2 = \ax r^2\sin^2{\ax\theta} + O(\ax r^{-1}), \qquad
    \frac{\mathcal{R}_\eta(S_{\ax r})}{\ax r} = 1 + O(\ax r^{-3})\qquad \text{as} \quad\ax r\to\infty.
\end{align}

Using \Cref{lem:axisymmetriccoordinates}, we may apply the earlier existence results for the Jang equation on $\Mend$. In particular, the asymptotic expansions in $\smash{(\ax{r},\ax\theta,\ax\phi)}$ have the regularity required for the analysis of \cite{sakovich2021jangequationpositivemass}. We obtain a Jang solution satisfying $\smash{f=O_3(\ax{r}^{1-2q})}$ in the asymptotically flat case and $\smash{f=\sqrt{1+\ax{r}^2}+\ax\alpha\log{\ax{r}}+\ax\psi+O_3(\ax{r}^{-1+\ax\varepsilon})}$ in the asymptotically hyperboloidal case, where $\ax\varepsilon\in(0,1)$, $\smash{\ax\alpha,\ax\psi\in C^3(S^2)}$. Additionally, the associated Jang graph $(\Mj,\gj)$ admits the properties listed in \Cref{table:jang}.

We now establish a preliminary estimate for the remainder term in \Cref{thm:masscharge}.

\begin{lemma}\label{lem:geometricangularmomentum}
    Let $(M,g,k)$ be an axisymmetric initial data set that is either asymptotically flat or asymptotically hyperboloidal. Suppose its axial Killing field $\eta$ satisfies $\eta^\flat\wedge d\eta^\flat =0$. Then for any $R\geq R_0$,
    \begin{align}
        \int_{\{\bar{r}\geq R\}} |\gradj\bar{r}|_{\gj}\lp|h-k|^2_{\gj} + |X|_{\gj}^2\rp  d\Vj 
        \geq \frac{8\pi \J^2}{\mathcal{R}_\eta(S_R)^3},
    \end{align}
    where $\bar{r}$ is the pullback of $\ax r$ to the Jang surface and $S_R=\{\ax{r}=R\}$ is an axisymmetric coordinate sphere.
\end{lemma}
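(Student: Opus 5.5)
The plan is to relate the Jang quantities to the angular momentum flux through the level sets $\{\bar r = t\}$, apply Cauchy--Schwarz on each level set, and then integrate over $t$ using the coarea formula.

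\emph{Step 1: the angular-momentum flux on the Jang graph.} Since $f$ is axisymmetric, $\eta$ lifts to a Killing field of $\gj$, still denoted $\eta$. It is tangent to the graph, and $|\eta|_{\gj}=|\eta|_g$ because $\eta(f)=0$. The Jang graph sits in $(M\times\R, g+dt^2)$. Extending $k$ trivially to this product, the Jang equation says $\tr_{\Mj}(h-k)=0$. The associated momentum-type constraint reads
\begin{align*}
\Div_{\gj}(h-k) - d\,\tr(h-k) = \text{(terms involving } J \text{ and } X\text{)}.
\end{align*}
Contracting with $\eta$ and using $J(\eta)=0$ should yield a divergence-free vector field on $\Mj$. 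The natural candidate is $(h-k)(\eta,\cdot)$, possibly corrected by a term built from $X$.

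To handle the correction, I would use the twist-free hypothesis $\eta^\flat\wedge d\eta^\flat=0$. The quantity $(h-k)(\eta,\cdot)$ can be expressed through $h(\eta,\cdot)$ and $k(\eta,\cdot)$. The term $h(\eta,w)$ vanishes since $\eta(f)=0$, and the twist-free condition kills the error terms coming from the projection of $\eta$. The outcome should be that, for every $t\geq R_0$,
\begin{align*}
8\pi\J = \int_{\{\bar r=t\}} \big((h-k)(\eta,\nu) + \text{(possibly } \langle X,\cdot\rangle \text{ terms)}\big)\, d\Aj .
\end{align*}
The constant is justified as follows. The flux is independent of $t$ by the divergence theorem, and it agrees with $\J$ on large spheres. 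On those spheres $h$ decays fast enough, in the AF case because $f=O_3(r^{1-2q})$. In the AH case, the leading terms of $h-k$ cancel against $(b,b)$ in the $\eta$-$\nu$ component because $\eta$ is tangent to the spheres.

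\emph{Step 2: Cauchy--Schwarz on each level set.} Combining the $h-k$ term with any $X$ term, a pointwise Cauchy--Schwarz inequality gives
\begin{align*}
|(h-k)(\eta,\nu)| + |X(\cdot)\text{-term}| \leq |\eta|\,\big(|h-k|^2+|X|^2\big)^{1/2}.
\end{align*}
Set $\Sigma_t=\{\bar r=t\}$. This projects to the coordinate sphere $S_t$, although its induced area on the graph differs from the area of $S_t$. Then
\begin{align*}
64\pi^2\J^2 \leq \Big(\int_{\Sigma_t}|\eta|^2\,d\Aj\Big)\Big(\int_{\Sigma_t}\big(|h-k|^2+|X|^2\big)\,d\Aj\Big).
\end{align*}

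\emph{Step 3: coarea and area comparison.} I would apply the coarea formula to $\bar r$:
\begin{align*}
\int_{\{\bar r\geq R\}}|\gradj\bar r|\big(|h-k|^2+|X|^2\big)\,d\Vj = \int_R^\infty\int_{\Sigma_t}\big(|h-k|^2+|X|^2\big)\,d\Aj\,dt.
\end{align*}
By Step 2 this is at least $\int_R^\infty 64\pi^2\J^2\big/\int_{\Sigma_t}|\eta|^2\,d\Aj\,dt$.

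To pass to the quantity in \eqref{def:radius}, one needs $\int_{\Sigma_t}|\eta|^2\,d\Aj \leq \int_{S_t}|\eta|_g^2\,dA$ times a controlled factor. The graph area element satisfies $d\Aj = \sqrt{1+|\grad^{S_t} f|^2}\,dA$, which is at least $dA$, so this inequality points the wrong way. Weighting by $|\gradj\bar r|$ is what should repair it. Writing $\int|\gradj\bar r|\,\cdot\,d\Vj = \int\int_{\Sigma_t}\cdot\,d\Aj\,dt$, I would instead use the coarea formula in the form where $|\gradj\bar r|\,d\Vj$ projects to $|\grad\ax r|\,dV_g$-type quantities. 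Alternatively, one can apply Cauchy--Schwarz with the weight $|\gradj\bar r|^{-1}$ in a way that makes the factor $\sqrt{1+|\grad f|^2}$ cancel against the lapse.

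The precise bookkeeping, namely showing that the Jang area distortion is exactly compensated so that $\int_{S_t}|\eta|_g^2\,dA_t$ appears, is where I expect the main obstacle. I would try first to express everything in the projected coordinates $(\ax r,\ax\theta,\ax\phi)$ on $M$, where $\eta$ is tangent to the coordinate spheres. Once the comparison holds, the constant follows directly: $64\pi^2\J^2\int_R^\infty dt\big/\int_{S_t}|\eta|^2 = 8\pi\J^2\,\mathcal{R}_\eta(S_R)^{-3}$, which is the claimed bound.
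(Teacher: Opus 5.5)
\textbf{Verdict: there is a genuine gap.} It shows up where you flagged it, in Step 3, but its source is Step 1.

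You write $8\pi\J$ as a flux through $\Sigma_t=\{\bar r=t\}$ against the Jang area element $d\Aj$, and bound the integrand pointwise by $|\eta|(|h-k|^2+|X|^2)^{1/2}$. This forces the factor $\int_{\Sigma_t}|\eta|^2\,d\Aj=\int_{S_t}|\eta|_g^2\, W\,dA$, with $W=\sqrt{1+|\nabla^{S_t}f|^2}$. With that setup the direction cannot be repaired. The coarea formula has already used up the weight $|\gradj\bar r|$, and redistributing powers of $W$ in Cauchy--Schwarz only moves the excess onto the $|h-k|^2+|X|^2$ factor. In addition, the Jang-graph conservation law you posit (a divergence-free field built from $(h-k)(\eta,\cdot)$ plus $X$-corrections) is never derived. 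And with a $\gj$-unit normal, your Step 2 bound would hold trivially without $X$. That signals this is not the structure behind the lemma.

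\textbf{The missing idea is that no flux identity on $\Mj$ is needed.}
\begin{itemize}
\item Work on the coordinate sphere $S_t\subset M$, with its $g$-unit normal $\nu$ and $g$-area $dA$. There $8\pi\J=\int_{S_t}k(\eta,\nu)\,dA$ by conservation of $k(\eta,\cdot)$ on $M$, which follows from $J(\eta)=0$.
\item Since $\eta(f)=0$ and $\eta$ is Killing, $h(\eta,\nu)=-\tfrac12\, d\eta^\flat(\nu,\nabla f)/\sqrt{1+|\nabla f|^2}$. Both $\nu$ and $\nabla f$ are orthogonal to $\eta$, so $|\eta|^2 d\eta^\flat(\nu,\nabla f)=(\eta^\flat\wedge d\eta^\flat)(\eta,\nu,\nabla f)=0$. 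Hence $h(\eta,\nu)=0$ pointwise; this is the precise role of the twist-free hypothesis. It gives $8\pi\J=\int_{S_t}(k-h)(\eta,\nu)\,dA$.
\item Next prove the pointwise bound $(h-k)(\eta,\nu)^2\le|\eta|_g^2\,(|h-k|_{\gj}^2+|X|_{\gj}^2)$. Because $\nu$ is $g$-unit rather than $\gj$-unit, $|h-k|_{\gj}^2$ alone controls $(h-k)(e_1,e_2)^2$ only with coefficient $2/(1+|\nabla f|^2)$, in the frame $e_1=\nabla f/|\nabla f|$, $e_2=\eta/|\eta|$. Adding $|X|^2$ raises this coefficient to $(2+|\nabla f|^2)/(1+|\nabla f|^2)\ge 1$. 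This is why $X$ appears in the lemma at all.
\item Finally, apply Cauchy--Schwarz after writing $dA=W^{1/2}\cdot W^{-1/2}\,dA$. Since $W\,dA$ is the area element of $\Sigma_t$, the first factor is $\int_{\Sigma_t}(|h-k|^2+|X|^2)\,d\Aj$. The second factor is $\int_{S_t}|\eta|_g^2\,W^{-1}\,dA\le\int_{S_t}|\eta|_g^2\,dA$, so the area distortion now points the right way.
\end{itemize}
Your coarea step and the final integration in $t$ then go through unchanged.
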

\begin{remark}
    The condition $\eta^\flat\wedge d\eta^\flat=0$ is the coordinate-invariant version of the simplifying assumption $A=B=0$ in \cite[Appendix A]{jaracz2018bekensteinboundspenroseinequalities}.
\end{remark}
\begin{proof}
    Consider the pullback $\bar{r} = \ax r\circ\pi$ by the graph projection $\pi:\Mj\to M$. By the co-area formula,
    \begin{align}\label{lem:geometricangularmomentum:eqn1}
        \int_{\{\bar{r}\geq R\}} |\gradj\bar{r}|_{\gj}\lp|h-k|^2_{\gj} + |X|_{\gj}^2\rp \, d\Vj 
        = \int_R^\infty \lp \int_{\{\bar{r}=\ax r\}} |h-k|^2_{\gj} + |X|_{\gj}^2 \, d\Aj \rp d\ax r.
    \end{align}
    Let $\nu$ be the unit outer normal to a coordinate sphere $S_{\ax r}\subset M$. By the axisymmetry of $f$,
    \begin{align}
        h(\eta,\nu)
        = \frac{\nu(\eta(f))-df(\grad_\nu\eta)}{\sqrt{1+|\grad f|_g^2}} 
        = -\frac{\langle\grad f,\grad_\nu\eta\rangle_g}{\sqrt{1+|\grad f|^2}} 
        = -\frac{d\eta^\flat(\nu,\grad f)}{2\sqrt{1+|\grad f|_g^2}},
    \end{align}
    where the last equality follows from $\eta$ being a Killing field. For any $V,W\in\eta^\perp$, the 1-form $\eta^\flat$ satisfies
    \begin{align}
        |\eta|^2 d\eta^\flat(V,W) = (\eta^\flat\wedge d\eta^\flat)(\eta,V,W).
    \end{align}
    Since $\nu\perp\eta$ by the axisymmetry of the sphere, the twist-free assumption $\eta^\flat\wedge d\eta^\flat=0$ implies
    \begin{align}\label{lem:geometricangularmomentum:eqn2}
        h(\eta,\nu)=0.
    \end{align}
    Next we claim that
    \begin{align}\label{lem:geometricangularmomentum:eqn3}
        (h-k)(\eta,\nu)^2 \leq |\eta|_g^2 \lp |h-k|_{\gj}^2 + |X|_{\gj}^2 \rp.
    \end{align}
    To show this, let $e_i$ be a local orthonormal basis with $\smash{e_1=\frac{\grad f}{|\grad f|_g}}$, $\smash{e_2 = \frac{\eta}{|\eta|_g}}$ and denote $T=h-k$. In this basis, the inverse Jang metric is given by $\smash{\gj^{-1}=\text{diag}(W^{-2},1,1)}$ where $W=\sqrt{\smash[b]{1+|\grad f|_g^2}}$. We compute
    \begin{equation}
        \begin{aligned}
            |T|_{\gj}^2
            &= \frac{1}{W^2}T_{11}^2 + T_{22}^2 + T_{33}^2 + \frac{2}{W^2}(T_{12}^2+T_{13}^2)+2T_{23}^2 \\
            |X|_{\gj}^2,
            &= \frac{|\grad f|_g^2}{W^2} |T(e_1,\cdot)|_{\gj}^2
            = \frac{|\grad f|_g^2}{W^2} \lp \frac{1}{W^2}T_{11}^2 + T_{12}^2 + T_{13}^2 \rp,
        \end{aligned}
    \end{equation}
    which implies the lower bound
    \begin{align}
        |T|_{\gj}^2 + |X|_{\gj}^2
        \geq \frac{2+|\grad f|_g^2}{1+|\grad f|_g^2} T_{12}^2 + 2 T_{23}^2.
    \end{align}
    Observe that $\nu^2=0$ since $\nu\perp\eta$. For any $a_1,a_2>0$, the weighted Cauchy-Schwarz inequality implies
    \begin{align}
        |T(e_2,\nu)|
        = \lv\nu^1 T_{12} + \nu^3 T_{23}\rv
        \leq \lp \frac{(\nu^1)^2}{a_1} + \frac{(\nu^3)^2}{a_2} \rp^\frac{1}{2} (a_1 T_{12}^2 + a_2 T_{23}^2)^\frac{1}{2}.
    \end{align}
    Take $a_1=\frac{2+|\grad f|_g^2}{1+|\grad f|_g^2}$, $a_2=2$, which satisfy $a_1\leq a_2$. Then
    \begin{align}
        T(\eta,\nu)^2
        \leq \frac{|\eta|^2}{a_1} (a_1 T_{12}^2 + 2T_{23}^2)
        \leq |\eta|_g^2 \lp |h-k|_{\gj}^2 + |X|_{\gj}^2 \rp,
    \end{align}
    establishing the desired inequality. By Cauchy-Schwarz,
    \begin{align}
        \ls 8\pi\J(S_{\ax r}) \rs^2
        &= \lp \int_{S_{\ax r}} (k-h)(\eta,\nu) \, dA \rp^2 && \text{(Eqn.~\ref{lem:geometricangularmomentum:eqn2})}\nonumber \\
        &\leq \lp \int_{S_{\ax r}} (|h-k|_{\gj}^2+|X|^2_{\gj}) \sqrt{1+|\grad^{S_{\ax r}}f|^2} \, dA \rp \lp \int_{S_{\ax r}} \frac{|\eta|_g^2 }{\sqrt{1+|\grad^{S_{\ax r}}f|^2}} \, dA \rp && \text{(Eqn.~\ref{lem:geometricangularmomentum:eqn3})} \nonumber \\
        &\leq \lp \int_{\{\bar{r}=\ax r\}} |h-k|_{\gj}^2+|X|^2_{\gj} \, d\Aj \rp \lp \int_{S_{\ax r}} |\eta|_g^2 \, dA \rp,
    \end{align}
    where $\grad^{S_{\ax r}} f$ denotes the tangential projection to $S_{\ax r}$. Rearranging and integrating over $\ax r$ gives the result.
\end{proof}

\begin{proposition}\label{prop:masschargeangularmomentum}
    Let $(M,g,k,\E)$ be a charged, axisymmetric initial data set that is either asymptotically flat or asymptotically hyperboloidal. Suppose its axial Killing field $\eta$ satisfies $\eta^\flat\wedge d\eta^\flat=0$. Then there exist charged harmonic functions $u^1,u^2,u^3$ on the charged Jang graph $(\Mj,\gj,\Ej)$ forming asymptotically flat coordinates on the end $\End_0$ such that the following holds.
    \begin{align}\label{prop:massangularmomentum:inequality}
        \jang C\cdot\jang E-\frac{7}{10}|\jang\Q| - \frac{\J^2}{36\mathcal{R}_\eta(S_{R_*})^3} \geq \frac{1}{80\pi} \sum_{i=1}^3 \int_{\Mj} \frac{|\gradc^2 u^i|^2}{|\gradj u^i|}+|\gradj u^i||\Ej|^2 \, dV_{\gj},
    \end{align}
    Here, $\jang E,\jang\Q$ are respectively the ADM energy and total charge of $\End_0$, $\jang{C}=1$ or $\frac{1}{2}$ if $M$ is asymptotically flat or asymptotically hyperboloidal, and $S_{R_*}=\{\ax{r}=R_*\}$ is an axisymmetric coordinate sphere. The radius $R_*$ may be taken to satisfy
    \begin{align}
        R_* \leq \max\lc R_g, R_u, \ls 2\max\{C_g, C_u\} \rs^{1/\qj} \rc,
    \end{align}
    where $C_g,C_u,R_g,R_u$ are any constants such that
    \begin{equation}\label{prop:massangularmomentum:estimates}
        \begin{aligned}
            |\gj-\delta|_\delta &\leq C_g |\bar{x}|^{-\qj}, \qquad \forall |\bar{x}|\geq R_g, \\
            |du^i-d\bar{x}^i|_\delta &\leq C_u |\bar{x}|^{-\qj}, \qquad \forall |\bar{x}|\geq R_u,\
        \end{aligned}
    \end{equation}
    holds for the symmetry-adapted asymptotically flat coordinates $\bar{x}^i$ on $\Mj$.
\end{proposition}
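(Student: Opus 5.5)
The plan is to specialize \Cref{thm:masscharge} to a single value of $\varepsilon$, average over the three charged harmonic coordinates, and then convert the leftover $|h-k|^2+|X|^2$ term into angular momentum using \Cref{lem:geometricangularmomentum}.

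\textbf{Choosing $\varepsilon$.} We take $u^1,u^2,u^3$ as in \Cref{thm:masscharge}, built from the symmetry-adapted asymptotically flat coordinates $\bar x^i$. We choose $\varepsilon=\frac35$, which lies in $[\frac12,\frac32]$. With this choice:
\begin{itemize}
\item the charge coefficient is $1-\frac{\varepsilon}{2}=\frac{7}{10}$;
\item the Hessian coefficient is $1-\frac{2\varepsilon}{3}=\frac35$;
\item the electric field coefficient is $\varepsilon=\frac35$;
\item the $|X|^2$ coefficient is $2-\frac1\varepsilon=\frac13$.
\end{itemize}
Since $|h-k|^2$ has coefficient $1\ge\frac13$, the right-hand side of \eqref{thm:masschargeinequality} is bounded below by
\[
\frac{1}{16\pi}\int_{\Mj}\frac35\Big(\frac{|\gradc^2 u^i|^2}{|\gradj u^i|}+|\gradj u^i||\Ej|^2\Big)+\frac13|\gradj u^i|\big(|h-k|^2+|X|^2\big)\,d\Vj .
\]

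\textbf{Averaging over $i$.} We average this inequality over $i=1,2,3$. The first group then carries the constant $\frac{3}{5}\cdot\frac{1}{48\pi}=\frac{1}{80\pi}$, which is exactly the right-hand side of \eqref{prop:massangularmomentum:inequality}. The remaining term is
\[
\frac{1}{144\pi}\int_{\Mj}\Big(\sum_{i}|\gradj u^i|\Big)\big(|h-k|^2+|X|^2\big)\,d\Vj.
\]
We restrict this to the region $\{\bar r\ge R_*\}$, where $\bar r=|\bar x|$ is the pullback of $\ax r$. There we aim for the pointwise bound
\[
\sum_{i=1}^3|\gradj u^i|_{\gj}\ \ge\ \tfrac12|\gradj\bar r|_{\gj}.
\]
Granting this bound, \Cref{lem:geometricangularmomentum} with $R=R_*$ (we also take $R_*\ge R_0$) gives the lower bound $\frac{1}{144\pi}\cdot\frac12\cdot\frac{8\pi\J^2}{\mathcal R_\eta(S_{R_*})^3}=\frac{\J^2}{36\mathcal R_\eta(S_{R_*})^3}$. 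Moving this to the left-hand side yields the proposition.

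\textbf{The comparison bound (main obstacle).} The remaining work, and the main obstacle, is the comparison $\sum_i|\gradj u^i|\ge\frac12|\gradj\bar r|$ with the stated threshold on $R_*$. We plan to argue as follows.
\begin{itemize}
\item For $|\bar x|\ge\max\{R_g,R_u,[2\max\{C_g,C_u\}]^{1/\qj}\}$, the estimates \eqref{prop:massangularmomentum:estimates} give $|\gj-\delta|_\delta\le\frac12$ and $|du^i-d\bar x^i|_\delta\le\frac12$.
\item We write $d\bar r=\sum_i\frac{\bar x^i}{\bar r}\,d\bar x^i$. This gives $|\gradj\bar r|_{\gj}\le\sum_i\frac{|\bar x^i|}{\bar r}|d\bar x^i|_{\gj}\le\sum_i|d\bar x^i|_{\gj}$.
\item We then bound $|d\bar x^i|_{\gj}$ from above by $|du^i|_{\gj}$ plus the error $|du^i-d\bar x^i|_{\gj}$. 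Comparing $\gj$-norms of covectors to $\delta$-norms uses the metric estimate.
\item The error must be absorbed using $\sum_i|d\bar x^i|_\delta\ge|d\bar r|_\delta=1$, or more sharply $\sum_i|d\bar x^i|_\delta=3$.
\end{itemize}
The constants here are tight, so the order in which these inequalities are applied has to be arranged carefully to land on the factor $\frac12$. If the clean factor fails, the fallback is to enlarge the threshold on $R_*$ slightly, at the cost of the stated form of the bound.
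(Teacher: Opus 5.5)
Your reduction is exactly the paper's: you take $\varepsilon=\frac35$ in \Cref{thm:masscharge}, average over $i$ to get the coefficients $\frac{1}{80\pi}$ and $\frac{1}{144\pi}$, restrict the $|h-k|^2+|X|^2$ term to $\{\bar r\ge R_*\}$, and invoke \Cref{lem:geometricangularmomentum} with the factor $\frac12$. Your arithmetic $\frac{1}{144\pi}\cdot\frac12\cdot 8\pi=\frac1{36}$ is correct.

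\textbf{The gap} is the one step you leave open. The chain you sketch for $\sum_i|\gradj u^i|_{\gj}\ge\frac12|\gradj\bar r|_{\gj}$ does not deliver $\frac12$ at the stated threshold.
\begin{itemize}
\item Writing $|\gradj\bar r|_{\gj}\le\sum_i|d\bar x^i|_{\gj}$ routes $d\bar r$, which has $|d\bar r|_\delta=1$, through the coordinate differentials, whose $\delta$-norms sum to $3$. This loses up to a factor of $3$, and the subsequent triangle inequality does not recover it.
\item At $\varepsilon=\frac12$ you get $|\gradj\bar r|_{\gj}\le\sum_i|du^i|_{\gj}+3\varepsilon(1-\varepsilon)^{-1/2}=\sum_i|du^i|_{\gj}+\frac{3}{\sqrt2}$. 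The only available lower bound is $\sum_i|du^i|_{\gj}\ge\sqrt{3/2}$, so this chain yields a ratio of only $1/(1+\sqrt3)\approx 0.37$.
\item You would then be forced into your fallback of shrinking $\varepsilon$. That changes the threshold $[2\max\{C_g,C_u\}]^{1/\qj}$ in the statement.
\end{itemize}

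\textbf{The fix.} The constants are not actually tight; compare the two sides directly instead of going through the $d\bar x^i$. The bound $|\gj-\delta|_\delta\le\varepsilon$ gives $(1-\varepsilon)\delta\le\gj\le(1+\varepsilon)\delta$ as quadratic forms. Hence every covector satisfies $(1+\varepsilon)^{-1}|\omega|_\delta^2\le|\omega|_{\gj}^2\le(1-\varepsilon)^{-1}|\omega|_\delta^2$. This gives two bounds:
\begin{itemize}
\item $|\gradj\bar r|_{\gj}\le(1-\varepsilon)^{-1/2}$;
\item for each $i$, using $|d\bar x^i|_\delta=1$, $|\gradj u^i|_{\gj}\ge(1+\varepsilon)^{-1/2}(|d\bar x^i|_\delta-|du^i-d\bar x^i|_\delta)\ge(1-\varepsilon)(1+\varepsilon)^{-1/2}$.
\end{itemize}
Summing over $i$,
\[
\sum_{i=1}^3\frac{|\gradj u^i|_{\gj}}{|\gradj\bar r|_{\gj}}\ge\frac{3(1-\varepsilon)^{3/2}}{(1+\varepsilon)^{1/2}},
\]
which is precisely the paper's bound. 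At $\varepsilon=\frac12$ it equals $\frac{\sqrt3}{2}>\frac12$. So the threshold in the statement works as written, with room to spare: the bound stays above $\frac12$ up to the root $\varepsilon_0\approx 0.64$ of $36(1-\varepsilon)^3=1+\varepsilon$.
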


\begin{proof}
    Suppose $(M,g,k)$ is asymptotically flat. Fix an end $\Mend$ and let $(\ax x^1,\ax x^2,\ax x^3)$ be the symmetry-adapted asymptotically flat coordinates given by \Cref{lem:axisymmetriccoordinates}. The pulled back coordinates $\bar{x}^i=\ax x^i\circ\pi$ to the Jang surface then define asymptotically flat coordinates on $\End_0\subset\Mj$. We may apply \Cref{lem:chargedharmonic} to obtain charged harmonic functions on $(\Mj,\gj,\Ej)$ such that
    \begin{align}\label{prop:massangularmomentum:eqn1}
        \Deltac u^i = 0, \qquad u^i = \bar{x}^i + O(\bar{r}^{1-\qj}).
    \end{align}
    Suppose the following estimate holds.
    \begin{align}
        |\gj - \delta|_\delta \leq \varepsilon, \qquad |du^i-d\bar{x}^i|_\delta \leq \varepsilon
    \end{align}
    In particular, if $0\leq\varepsilon\leq\frac{1}{2}$, then
    \begin{align}\label{prop:massangularmomentum:eqn2}
        \sum_{i=1}^3 \frac{|\gradj u^i|_{\gj}}{|\gradj\bar{r}|_{\gj}} 
        \geq 3\frac{(1-\varepsilon)^{3/2}}{(1+\varepsilon)^{1/2}}
        \geq\frac{1}{2}.
    \end{align}
    The upper bound $\smash{\frac{1}{2}}$ is chosen for convenience since $\smash{\frac{1}{2}}<\varepsilon_0$, where $\varepsilon_0$ is the real solution of $36(1-\varepsilon)^3=1+\varepsilon$. The asymptotic conditions for $\Mj$ and $u^i$ imply that there exist constants $C_g,C_u,R_g,R_u>0$ such that \eqref{prop:massangularmomentum:estimates} holds. Observe that \eqref{prop:massangularmomentum:eqn2} is then satisfied for any $\bar{r}$ such that $\max\{C_g \bar{r}^{-\qj}, C_u \bar{r}^{-\qj}\} \leq \frac{1}{2}$. Hence the arguments above hold on $\{\bar{r}\geq R_*\}$ for some $\smash{R_*\leq\max\{ R_g,R_u,\ls 2\max\{C_g,C_u\}\rs^{1/\qj}\}}$. By \Cref{lem:geometricangularmomentum},
    \begin{align}\label{prop:massangularmomentum:eqn3}
        \sum_{i=1}^3 \int_{\{\bar{r}\geq R_*\}} |\gradj u^i|_{\gj}(|h-k|_{\gj}^2 + |X|_{\gj}^2) \, d\Vj 
        \geq \int_{\{\bar{r}\geq R_*\}} \frac{1}{2}|\gradj\bar{r}|_{\gj}(|h-k|_{\gj}^2 + |X|_{\gj}^2) \, d\Vj
        \geq \frac{4\pi\J^2}{\mathcal{R}_\eta(S_{R_*})^3}.
    \end{align}
    Now take the charged harmonic functions $u^i$ in the proof of \Cref{thm:masscharge} to be given by \eqref{prop:massangularmomentum:eqn1} and set $\varepsilon=\frac{3}{5}$ in the subsequent integral inequality. Combined with \eqref{prop:massangularmomentum:eqn3}, we obtain the desired estimate.
    \begin{align}
        3\lp\jang C\cdot\jang E-\frac{7}{10}|\jang\Q| \rp
        &\geq \sum_{i=1}^3 \frac{1}{16\pi}\int_{\Mj} \frac{3}{5}\lp\frac{|\gradc^2 u^i|^2}{|\gradj u^i|}+|\gradj u^i||\Ej|^2\rp + \frac{1}{3}|\gradj u^i|\lp|h-k|^2+|X|^2\rp \, d\Vj \\
        &\geq \sum_{i=1}^3 \frac{3}{80\pi}\int_{\Mj} \frac{|\gradc^2 u^i|^2}{|\gradj u^i|}+|\gradj u^i||\Ej|^2 \, d\Vj + \frac{\J^2}{12\mathcal{R}_\eta(S_{R_*})^3}
    \end{align}

    Suppose $(M,g,k)$ is asymptotically hyperboloidal. Then the asymptotically flat coordinates $\smash{(\bar{x}^1,\bar{x}^2,\bar{x}^3)}$ on $\End_0$ are precisely the ones used by \cite[Corollary 6.11]{sakovich2021jangequationpositivemass} to compute the ADM energy of the end $\Mend$ with coordinates $(\ax{r},\ax\theta,\ax\phi)$. Thus the proof of \Cref{thm:masscharge} may be carried out in the symmetry-adapted coordinates without modification. The rest of the argument then follows as above.
\end{proof}

\section{Area estimate via the charged Hessian}\label{sec:area}

Let $u^i:\Mj\to\R$ be the charged harmonic functions given by \Cref{lem:chargedharmonic} such that $u^i\to x^i$ along $\End_0$, where $x^i$ are asymptotically flat coordinates. Consider the Gram matrix $\G$ associated to $\gradj u^1,\gradj u^2, \gradj u^3$.
\begin{align}
    \G_{ij}=\langle\gradj u^i,\gradj u^j\rangle
\end{align}
Whenever $\G$ is positive-semidefinite, let $\G^{1/2}$ be its square root and take $\sigma_1,\sigma_2,\sigma_3$ to be the nonnegative eigenvalues of $\G^{1/2}$. Define the \emph{singular-value defect} of $U=(u^1,u^2,u^3)$ by
\begin{align}\label{def:phi}
    \Phi = \lVert\G^{1/2}-I\rVert_F
    = \lp \sum_{i=1}^3 (\sigma_i-1)^2 \rp^{1/2},
\end{align}
where $\norm{\cdot}_F$ is the Frobenius norm for matrices. Observe that $\sigma_i$ are the singular values of $dU$. Hence $\Phi$ measures the deviation of $U$ from a local isometry. The asymptotics of $u^i$ along the flat and cylindrical ends (\Cref{lem:chargedharmonic}) imply
\begin{equation}
    \begin{aligned}
        \Phi\to 0 \quad &\text{as} \quad |x|\to\infty \quad \text{along} \quad \End_0, \\
        \Phi\to \sqrt{3} \quad &\text{as} \quad |t|\to\infty \quad \text{along} \quad \End_l.
    \end{aligned}
\end{equation}
For $0\leq a<b$, we shall denote
\begin{align}
    \Sigma_b = \Phi^{-1}(b), \qquad\Omega_a^b = \Phi^{-1}([a,b]), \qquad 
    \reg{[a,b]}{\Phi} = \{t\in[a,b] \,:\, t \text{ is a regular value of $\Phi$}\}.
\end{align}

The definition \eqref{def:phi} is refines the function $F=\norm{\G-I}_F^2$ studied in \cite{allen2025proofspacetimepenroseinequality}. In \Cref{lem:defectproperties}, we establish that $\Phi$ is well-defined on $\{\Phi<1\}$ and satisfies the same basic properties as $F$. In what follows, we shall assume that $\Phi$ is non-constant on $\{0<\Phi<b\}$. The following is the main result of the section.

\begin{theorem}\label{thm:area}
    Let $b\in\reg{(0,\sqrt{2/3})}{\Phi}$. Then for any $\eta\in(0,1)$ and regular values $0<t_-<t_+<b$ of $\Phi$, there exists $t_0\in\reg{[t_-,t_+]}{\Phi}$ and a constant $\mathbf{C}=\mathbf{C}(b,\eta,t_\pm)>0$ such that
    \begin{align}
        |\Sigma_{t_0}| \leq \mathbf{C} \lp \sum_{i=1}^3 \int_{\Omega_0^b} \frac{|\gradc^2 u^i|^2}{|\gradj u^i|} + |\gradj u^i||\Ej|^2 \, dV_{\gj} \rp^2,
    \end{align}
    where $\mathbf{C}$ is explicitly given in \eqref{def:constant}.
\end{theorem}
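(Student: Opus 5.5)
The plan is to follow the coarea/isoperimetric strategy of \cite{allen2025proofspacetimepenroseinequality}, with $F$ replaced by the defect $\Phi$, and to feed in the charged Hessian through a pointwise gradient bound on $\Phi$. The first step is a pointwise estimate on $\{\Phi<1\}$. There $\G$ is positive definite and $\Phi$ is Lipschitz (by \Cref{lem:defectproperties}). I would differentiate $\G^{1/2}$ through its eigenvalues, using $\grad\G_{ij}=\gradj^2u^i(\gradj u^j,\cdot)+\gradj^2u^j(\gradj u^i,\cdot)$, and then rewrite $\gradj^2 u^i$ as $\gradc^2u^i$ minus the terms $2\Ej_{(a}u^i_{b)}-\langle\Ej,\gradj u^i\rangle\gj_{ab}$. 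The goal of this step is a bound of the form
\begin{align*}
    |\gradj\Phi|\leq c_1(b)\sum_{i=1}^3\lp|\gradc^2u^i|+|\gradj u^i||\Ej|\rp \quad\text{on }\Omega_0^b.
\end{align*}
Here $c_1(b)$ depends on the uniform bounds $1-b\leq\sigma_i\leq 1+b$ and $|\gradj u^i|\leq 1+b$, which follow from $\Phi\leq b<\sqrt{2/3}$.

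Next I would apply Cauchy--Schwarz with weight $|\gradj u^i|$:
\begin{align*}
    \int_{\Omega_0^b}|\gradj\Phi|\,dV \leq c_2(b)\,\Big(\sum_i\int_{\Omega_0^b}\tfrac{|\gradc^2u^i|^2}{|\gradj u^i|}+|\gradj u^i||\Ej|^2\Big)^{1/2}\Big(\sum_i\int_{\Omega_0^b}|\gradj u^i|\Big)^{1/2}.
\end{align*}
The coarea formula then gives $\int_{t_-}^{t_+}|\Sigma_t|\,dt\leq\int_{\Omega_{t_-}^{t_+}}|\gradj\Phi|$. Sard's theorem makes almost every $t$ a regular value, so there is a regular $t_0\in[t_-,t_+]$ with $|\Sigma_{t_0}|\leq(t_+-t_-)^{-1}\int|\gradj\Phi|$.

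What remains is to bound the volume factor $\int_{\Omega_0^b}|\gradj u^i|$ by the same energy $\mathcal{D}=\sum_i\int_{\Omega_0^b}|\gradc^2u^i|^2/|\gradj u^i|+|\gradj u^i||\Ej|^2$. Combining this with the previous step would give $|\Sigma_{t_0}|\lesssim\mathcal{D}^{1/2}\cdot\mathcal{D}^{1/2}\cdot(\ldots)$, and the target exponent $2$ forces $\Vol(\Omega_0^b)\lesssim\mathcal{D}^3$. I would get this by a Poincar\'e--Szeg\H{o} (P\'olya--Szeg\H{o}) argument for $\Phi$ on the region $\Omega_0^{t}$. The key point is that $\Phi$ goes from $0$ (at infinity) up to $b$, so the sets $\{\Phi<s\}$ for $s<b$ have complements of bounded volume. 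By the Euclidean-type isoperimetric inequality in the near-isometric charts provided by $U$ (where $\sigma_i\approx1$), the Schwarz rearrangement $\Phi^*$ satisfies
\begin{align*}
    \int|\gradj\Phi|\geq\int|\grad\Phi^*|,
\end{align*}
and a one-dimensional calculus inequality for radial functions increasing by $t_+-t_-$ across a shell of volume $V$ gives $\int|\grad\Phi^*|\gtrsim(t_+-t_-)V^{2/3}$. I would run this on the region $\Omega_{t_-}^{t_+}$ and optimize with the parameter $\eta$ (splitting the interval at an intermediate value). This is how the dependence $\mathbf{C}(b,\eta,t_\pm)$ enters.

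Combining the pieces yields
\begin{align*}
    V^{2/3}\lesssim\mathcal{D}^{1/2}V^{1/2}\quad\Longrightarrow\quad V\lesssim\mathcal{D}^3,
\end{align*}
and substituting back gives $|\Sigma_{t_0}|\leq\mathbf{C}\mathcal{D}^2$, up to consistent bookkeeping of exponents. I expect the main obstacle to be the rearrangement/isoperimetric step. It requires an isoperimetric inequality on $\Omega_0^b$ with explicit constants. I would try to pull it back from $\R^3$ via $U$, using the bound on the Jacobian of $U$ and a degree argument to control multiplicity. Closing this requires that $U$ be proper onto a neighborhood of infinity and that the level sets of $\Phi$ separate $\End_0$ from the cylindrical ends, where $\Phi\to\sqrt3>b$.
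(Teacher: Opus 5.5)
Your pointwise gradient bound and the coarea/Cauchy--Schwarz choice of $t_0$ are fine; they correspond to \Cref{lem:gradphi} and the first step of the paper's proof. The genuine gap is the step meant to bound $V=\Vol(\Omega_{t_-}^{t_+})$ by $\mathcal{D}^3$.

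\textbf{The one-dimensional inequality is false.} You claim $\int|\grad\Phi^*|\gtrsim(t_+-t_-)V^{2/3}$ for a radial function rising by $t_+-t_-$ across a shell of volume $V$. By coarea the left side is $\int_{t_-}^{t_+}4\pi R(s)^2\,ds$. Take a profile that rises from $t_-$ to $t_+-\epsilon$ in a thin layer just outside the inner radius $R_1$, then creeps up to $t_+$ out to radius $R_2\gg R_1$. Its total variation is of order $R_1^2(t_+-t_-)+\epsilon R_2^2$, hence $O(R_1^2(t_+-t_-))$ when $\epsilon\le(R_1/R_2)^2(t_+-t_-)$. Meanwhile $V^{2/3}\sim R_2^2$.

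The reason is that the isoperimetric inequality bounds $|\Sigma_t|$ below by the volume enclosed on one side of $\Sigma_t$, not by the volume of the shell. So the average area over $[t_-,t_+]$ only sees the volume inside the shell.

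\textbf{Natural repairs do not work either.} Using Dirichlet energy on the same shell does not help: the capacity of $\{R_1<|x|<R_2\}$ is $4\pi(R_1^{-1}-R_2^{-1})^{-1}\approx4\pi R_1$. Using the $L^1$ bound on an adjacent region instead, the Cauchy--Schwarz step that brings in $\mathcal{D}$ reintroduces the volume of that adjacent region, which is uncontrolled, so the argument is circular. The inequality $V^{2/3}\lesssim\mathcal{D}^{1/2}V^{1/2}$ on which your argument closes is therefore unavailable.

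\textbf{The missing idea.} Bound $V$ by the $L^2$ energy on a neighbouring region, via the capacity of a ball. When $\beta_0\le\tau_-$, the increasing rearrangement of $\tPhi$ on $\tOmega_{\beta_0}^{\beta}$ must climb from $\tau_+$ to $\beta$ outside a ball of volume $\Vol(\tOmega_{\beta_0}^{\tau_+})\ge V$. \Cref{lem:volumeenergy} then gives $\Vol(\tOmega_{\beta_0}^{\tau_+})^{1/3}\le\frac{1}{3\omega_3^{2/3}(\beta-\tau_+)^2}\int|\grad\tPhi_\star|^2$. The crucial feature is $\int_{\tau_+}^{\beta}\V_{\beta_0}'/\V_{\beta_0}^{4/3}\,d\tau\le3\V_{\beta_0}(\tau_+)^{-1/3}$ however large $\V_{\beta_0}(\beta)$ is, so the volume of the region carrying the energy never enters.

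Transferring this to $\Mj$ requires the $L^2$ P\'olya--Szeg\H{o} inequality (\Cref{lem:polyaszego}), which rests on \Cref{lem:isoperimetric}. That isoperimetric inequality needs a reference level $b_0$ nearly minimizing $p(t)|\Sigma_t|$, so that the second boundary component $\Sigma_{b_0}$ is controlled by $|\Sigma_t|$. This, not splitting the interval, is where $\eta$ enters. The position of $b_0$ relative to $t_\pm$ forces the case analysis: the decreasing rearrangement with energy on $\tOmega_\varepsilon^{\tau_-}$, $\varepsilon\to0$, when $b_0\ge t_+$, and a subadditive split when $t_-\le b_0\le t_+$.

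Only then does $|\tSigma_{\tau_0}|\le I_0^{-1}V^{1/2}\bigl(\int|\gradj\tPhi|^2\bigr)^{1/2}$ close as $\mathcal{D}^{3/2}\cdot\mathcal{D}^{1/2}$. This is also why the statement involves the energy over all of $\Omega_0^b$ rather than just $\Omega_{t_-}^{t_+}$.

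\textbf{Two smaller slips.} First, $\Omega_0^b$ contains an asymptotically flat end, so $\int_{\Omega_0^b}|\gradj u^i|$ and $\Vol(\Omega_0^b)$ are infinite; your Cauchy--Schwarz must be done on the compact set $\Omega_{t_-}^{t_+}$. Second, the complements of $\{\Phi<s\}$ contain the cylindrical ends and so do not have finite volume; what is true is that $\Omega_s^b$ is compact for $0<s<b$.
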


Throughout this section we shall assume
\begin{align}
    \inf_{t\in(0,1)} |\Sigma_t| > 0,
\end{align}
since the result is immediate otherwise. The main difficulty is establishing a relationship between the Dirichlet energy of $\Phi$ and the volume of its sublevel sets (\Cref{lem:volumeenergy,lem:polyaszego}). This is accomplished by proving a P\'olya-Szeg\H{o} inequality for the symmetric rearrangement of $\Phi$. Our first step is to establish an isoperimetric inequality for the sublevel sets of $\Phi$. Let $\omega_3 = \Vol_{\R^3}(B_1)$.

\begin{lemma}[Isoperimetric inequality]\label{lem:isoperimetric}
    For any $b\in\reg{(0,\sqrt{2/3})}{\Phi}$ and $\eta\in(0,1)$, there exists $b_0\in\reg{(0,b]}{\Phi}$ such that
    \begin{align}
        \Vol(\Omega_{b_0}^t)^{2/3} &\leq C_\eta \bar K(t)|\Sigma_t|\quad\qquad \forall t\in\reg{[b_0,b]}{\Phi}, \\
        \Vol(\Omega_t^{b_0})^{2/3} &\leq C_\eta\bun{K}(t;b)|\Sigma_t|\qquad \forall t\in\reg{(0,b_0]}{\Phi},
    \end{align}
    where $C_\eta = \frac{2+\eta}{3\omega_3^{1/3}}$, $\bar{K}(t) = \lp\frac{1+t/\sqrt{2}}{1-t/\sqrt{3}}\rp^2$, and $\bun{K}(t;b) = \lp\frac{1+t/\sqrt{2}}{1-b/\sqrt{3}}\rp^2$.
\end{lemma}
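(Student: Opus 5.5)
The approach is to transplant the Euclidean isoperimetric inequality to the sublevel sets of $\Phi$ through the map $U=(u^1,u^2,u^3)$, which on $\{\Phi<\sqrt{2/3}\}$ is close to a local isometry. \Cref{lem:defectproperties} guarantees that $\Phi$ is well defined there. The first step is a pointwise comparison of volume and area elements. If $\Phi\le t$, then every singular value of $dU$ satisfies $|\sigma_i-1|\le t$. A sharper bound follows from $\sum(\sigma_i-1)^2\le t^2$: the Jacobian $\det dU=\sigma_1\sigma_2\sigma_3$ is bounded below by a power of $(1-t/\sqrt3)$, the worst case being when all deviations are equal to $-t/\sqrt3$. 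Likewise, the largest $2$-dimensional Jacobian $\sigma_i\sigma_j$ of $dU$ restricted to a tangent plane is bounded above by $(1+t/\sqrt2)^2$, the worst case being two deviations equal to $t/\sqrt2$. These two facts should produce $\bar K(t)$ once $\frac23$-powers are taken. The constraint $b<\sqrt{2/3}$ ensures $1-t/\sqrt3>0$ with room to spare, and it keeps $\Phi<1$.

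The second step handles the fact that $U$ need not be injective on $\Omega_{b_0}^t$. I will use the degree and multiplicity structure instead. On the asymptotically flat end, $U$ is a diffeomorphism near infinity and $\Phi\to0$. Choose $b_0$ small and regular so that $\{\Phi\le b_0\}$ contains a neighborhood of infinity in $\End_0$ on which $U$ is a diffeomorphism onto the complement of a large ball; this is where the freedom in $b_0$ is used. Then, for a regular value $t\in[b_0,b]$, the region $\Omega_{b_0}^t$ is compact with boundary $\Sigma_{b_0}\cup\Sigma_t$. The pushforward $U_*[\Omega_{b_0}^t]$ is an integral $3$-current in $\R^3$ with boundary $U_*[\Sigma_t]-U_*[\Sigma_{b_0}]$. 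Its multiplicity function $N(y)$ has mass at least $\int_{\Omega_{b_0}^t}\det dU$, because $\det dU>0$ throughout, so there is no cancellation. Apply the isoperimetric inequality for integral currents, in its sharp form for sets with multiplicity, to get
\begin{align*}
\big(\textstyle\int N\,dy\big)^{2/3}\le \tfrac{1}{3\omega_3^{1/3}}\,\mathbf M(\partial)\cdot(1+\text{small}).
\end{align*}
The boundary mass is at most $(1+t/\sqrt2)^2|\Sigma_t|$ plus the contribution of $\Sigma_{b_0}$. Since $\Sigma_{b_0}$ is fixed while the relevant quantities are bounded below by $\inf|\Sigma_t|>0$, I expect the $\Sigma_{b_0}$ contribution to be absorbed into the $\eta$-slack. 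This slack is why the constant is $\frac{2+\eta}{3}$ rather than $\frac13$: the factor $2$ accounts for combining the two boundary pieces, or alternatively for a non-sharp isoperimetric constant with multiplicity, $\mathbf M(T)^{2/3}\le c\,\mathbf M(\partial T)$, with the leftover absorbed by $\eta$.

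The second inequality, for $t\in(0,b_0]$, is proved the same way on $\Omega_t^{b_0}$. There the singular-value bounds are evaluated using the supremum $b$ of $\Phi$ on the whole range of interest, which explains why $\bun K$ has $1-b/\sqrt3$ in the denominator. Combining the two steps gives $\Vol(\Omega)^{2/3}\le(1-t/\sqrt3)^{-2}\big(\int_\Omega\det dU\big)^{2/3}\le C_\eta\bar K(t)|\Sigma_t|$.

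The main obstacle is the second step: controlling the inner boundary $\Sigma_{b_0}$ (respectively $\Sigma_{b_0}$ as the outer piece in the second inequality) so that it costs only the factor $(2+\eta)/3$. I will first try choosing $b_0$ so that $|\Sigma_{b_0}|$ is at most a $\eta$-fraction of $\inf_t|\Sigma_t|$. If that fails, I will use that both boundary components are isoperimetrically comparable and accept the constant $2$.
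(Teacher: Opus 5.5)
\textbf{The gap: controlling the $\Sigma_{b_0}$ boundary piece.} Your first step, the Jacobian bounds $q(t)=(1-t/\sqrt3)^3$ and $p(t)=(1+t/\sqrt2)^2$, is correct. So is the multiplicity/isoperimetric step. The paper writes $\int_\Omega J(U)=\sum_{n\ge1}\Vol_{\R^3}\{\chi\ge n\}$, uses subadditivity of $s\mapsto s^{2/3}$, and applies the sharp Euclidean inequality to each $\{\chi\ge n\}$. So no ``$(1+\text{small})$'' or non-sharp constant is needed, and the factor $2$ does not come from there.

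The gap is the one you flagged. Your first plan cannot work: since $b_0\in(0,b]$, you always have $|\Sigma_{b_0}|\ge\inf_s|\Sigma_s|>0$, so $|\Sigma_{b_0}|$ can never be an $\eta$-fraction of that infimum. Choosing $b_0$ small so that $U$ is a diffeomorphism near infinity does not help either. By properness, $\Omega_{b_0}^t$ is compact for any regular $b_0$. Smallness of $b_0$ also gives no upper bound on $|\Sigma_{b_0}|$ relative to $|\Sigma_t|$, uniformly in $t$. The fallback, ``both boundary components are isoperimetrically comparable,'' is not a mechanism, because nothing you have stated relates $|\Sigma_{b_0}|$ to $|\Sigma_t|$.

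\textbf{The missing idea.} The freedom in $b_0$ must be spent on near-minimality, not on smallness. Choose $b_0\in\reg{(0,b]}{\Phi}$ with
\[
p(b_0)|\Sigma_{b_0}|\le(1+\eta)\inf_{s\in\reg{(0,b]}{\Phi}}p(s)|\Sigma_s|.
\]
This is possible because the infimum is positive and finite. Then for every regular $t\in(0,b]$, whether $t\ge b_0$ or $t\le b_0$,
\[
\int_{\Sigma_{b_0}\cup\Sigma_t}J(U\vert_{\Sigma_{b_0}\cup\Sigma_t})\,dA_{\gj}\le p(b_0)|\Sigma_{b_0}|+p(t)|\Sigma_t|\le(2+\eta)\,p(t)|\Sigma_t|.
\]
This is exactly where the $2+\eta$ in $C_\eta$ comes from, and why $b_0$ depends on $\eta$.

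For the volume term, use $\int J(U)\ge q(t)\Vol(\Omega_{b_0}^t)$ when $t\ge b_0$. When $t\le b_0$, use $\int J(U)\ge q(b_0)\Vol(\Omega_t^{b_0})\ge q(b)\Vol(\Omega_t^{b_0})$. Combined with the isoperimetric step, these give $\bar K(t)=p(t)/q(t)^{2/3}$ and $\bun{K}(t;b)=p(t)/q(b)^{2/3}$, respectively.
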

\begin{proof}
    Fix $\eta\in(0,1)$ and let $b_0\in\reg{(0,b]}{\Phi}$ such that
    \begin{align}
        p(b_0)|\Sigma_{b_0}| \leq (1+\eta) \inf_{t\in\reg{(0,b]}{\Phi}} p(t)|\Sigma_t|,
    \end{align}
    where $p(t)$ is given in \Cref{lem:jacobian}. Suppose $t\in[b_0,b]_\text{reg}$. Let $\smash{J(U)=\sqrt{\det{\G_{ij}}}}$ be the Jacobian of $U$ and denote by $J(U\vert_{\Sigma})$ the Jacobian of the restriction of $U$ to $\Sigma\subset\Mj$. Then we may apply \Cref{lem:jacobian} to obtain
    \begin{align}\label{lem:isoperimetric:eqn1}
        \begin{gathered}
             \int_{\del\Omega_{b_0}^t} J(U\vert_{\del\Omega_{b_0}^t}) \,dA_{\gj}
            \leq p(b_0) |\Sigma_{b_0}| + p(t) |\Sigma_t|
            \leq (2+\eta)p(t)|\Sigma_t|, \\
            \int_{\Omega_{b_0}^t} J(U) \,dV_{\gj} 
            \geq q(t) \Vol(\Omega_{b_0}^t),
        \end{gathered}
    \end{align}
    where the last inequality uses that $q(t)$ is decreasing in $t$. Define $\chi(x) = \mathcal{H}^0(U^{-1}(x)\cap \Omega_{b_0}^t)$ where $\mathcal{H}^0$ is the 0-dimensional Hausdorff measure. Then $\chi$ is non-negative, integer-valued, and BV \cite[Lemma A.4 \& A.5]{allen2025proofspacetimepenroseinequality}. The area formula for $U$ gives
    \begin{align}
        \int_{\Omega_{b_0}^t} J(U) \, dV_{\gj}
        = \int_{\R^3} \chi \, dV_{\R^3}
        = \int_{\R^3} \sum_{n=1}^\infty \mathds{1}_{\{\chi \geq n\}} \, dV_{\R^3}
        = \sum_{n=1}^\infty \Vol_{\R^3}\{\chi \geq n\}.
    \end{align}
    By the subadditivity of $s\mapsto s^{2/3}$,
    \begin{equation}
        \begin{aligned}
            \lp\sum_{n=1}^\infty \Vol_{\R^3}\{\chi \geq n\} \rp^{2/3}
            &\leq \sum_{n=1}^\infty \lp \Vol_{\R^3} \{\chi \geq n\} \rp^{2/3} \\
            &\leq \sum_{n=1}^\infty \frac{|\del\{\chi\geq n\}|}{3\omega_3^{1/3}} && \text{(Isoperimetric inequality)} \\
            &\leq \frac{1}{3\omega_3^{1/3}} \int_{\del\Omega_{b_0}^t} J(U\vert_{\del\Omega_{b_0}^t}) \, dA_{\gj} && \text{(\cite[Eqn. 4.15]{allen2025proofspacetimepenroseinequality})}.
        \end{aligned}
    \end{equation}
    Combined with \eqref{lem:isoperimetric:eqn1}, the desired result follows. The proof when $t\in(0,b_0]_\text{reg}$ follows in a similar manner, with the second inequality in \eqref{lem:isoperimetric:eqn1} replaced by
    \begin{align}
        \begin{gathered}
            \int_{\Omega_t^{b_0}} J(U) \,dV_{\gj} 
            \geq q(b_0) \Vol(\Omega_t^{b_0})
            \geq q(b) \Vol(\Omega_t^{b_0}),
        \end{gathered}
    \end{align}
    again using that $q(t)$ is decreasing in $t$.
\end{proof}

Next we compute an initial upper bound for $|\gradj\Phi|^2$. 

\begin{lemma}\label{lem:gradphi}
    The following holds pointwise on $\{0<\Phi<1\}$:
    \begin{align}
        |\gradj \Phi|^2 \leq \gamma(\Phi)^2\sum_{i=1}^3 \frac{|\gradc^2 u^i|^2}{|\gradj u^i|} + |\gradj u^i||\Ej|^2,
    \end{align}
    where
    \begin{align}
        \gamma(t) = \underset{|\rho|\leq\sqrt{3}}{\max} \ls \frac{2}{3} + \frac{2t}{9} \lp \rho+\sqrt{2(3-\rho^2)} \rp + \frac{(\rho+t)^2}{3+t\rho}\rs^{1/2}.
    \end{align}
\end{lemma}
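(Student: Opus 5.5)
We differentiate $\Phi$ directly, first showing that the charged Hessian makes the Gram matrix transform almost conformally. In a frame where everything is smooth (on $\{0<\Phi<1\}$, after handling repeated singular values by a standard perturbation/a.e. argument), write $\nabla u^i$ in coordinates. The identity $\gradc_{ij}u=\grad_{ij}u+2\E_{(i}u_{j)}-\langle\E,\grad u\rangle g_{ij}$ substituted into $\partial_v\G_{ij}=\grad^2u^i(\grad u^j,v)+\grad^2u^j(\grad u^i,v)$ gives
\begin{align*}
\partial_v\G_{ij}=\mathcal{H}_v^{ij}-2\E(v)\G_{ij},\qquad \mathcal{H}_v^{ij}=\gradc^2u^i(\gradj u^j,v)+\gradc^2u^j(\gradj u^i,v).
\end{align*}
Here the antisymmetric terms $\E(\grad u^i)du^j(v)-\E(\grad u^j)du^i(v)$ cancel under symmetrization in $i,j$. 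Thus the electric field enters only through a scaling $\G\mapsto(1-2\E(v))\G$, so that $\partial_v\G^{1/2}=L(\mathcal{H}_v)-\E(v)\G^{1/2}$. Here $L$ is the derivative of the matrix square root, i.e.\ the solution of the Sylvester equation $\G^{1/2}L+L\G^{1/2}=\mathcal{H}_v$.

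Next, diagonalize $\G^{1/2}$ in an orthonormal eigenbasis with eigenvalues $\sigma_k$. Since $\G^{1/2}-I$ is then diagonal, only the diagonal entries $L(\mathcal{H}_v)_{kk}=\mathcal{H}_v^{kk}/(2\sigma_k)$ contribute. This gives
\begin{align*}
\Phi\,\partial_v\Phi=\sum_k(\sigma_k-1)\frac{\gradc^2\tilde u^k(\gradj\tilde u^k,v)}{\sigma_k}-\E(v)\sum_k(\sigma_k-1)\sigma_k,
\end{align*}
where $\tilde u^k$ are the rotated functions. Their charged Hessians are the corresponding orthogonal combinations, so $\sum_k|\gradc^2\tilde u^k|^2=\sum_i|\gradc^2u^i|^2$. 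We also have $|\gradj\tilde u^k|=\sigma_k$. Set $t=\Phi$ and $\rho=\Phi^{-1}\sum_k(\sigma_k-1)$. Then $|\rho|\le\sqrt3$ by Cauchy–Schwarz, $\sum_k(\sigma_k-1)\sigma_k=t(t+\rho)$, and $\sum_k\sigma_k=3+t\rho$.

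Now take the sup over unit $v$ and estimate the two pieces. For the Hessian piece, use the trace-free estimate of \Cref{lem:tracelessestimate}: each vector $\gradc^2\tilde u^k(\gradj\tilde u^k/\sigma_k,\cdot)$ has norm squared at most $\tfrac23|\gradc^2\tilde u^k|^2$. Then apply Cauchy–Schwarz with weights $\sigma_k$ so that the right-hand side appears as $|\gradc^2u^i|^2/|\gradj u^i|$. For the electric piece, write $|\E|\,t(t+\rho)$ and apply Cauchy–Schwarz against $\sum_k\sigma_k|\E|^2$. This produces the coefficient $(\rho+t)^2/(3+t\rho)$ on $\sum_i|\gradj u^i||\Ej|^2$, after using $|\Ej|=|\E|$ in the Jang norm convention. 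Here we need that $\sum_i|\gradj u^i|\ge\sum_k\sigma_k$, i.e.\ the trace of $\G^{1/2}$ is at most the sum of the diagonal entries of $\G^{1/2}$ in the original basis, which holds by concavity/majorization. The weighting also uses $\sigma_k<1+t<2$ on $\{\Phi<1\}$. Finally, $(a+b)^2$ is expanded with the cross term absorbed via Young's inequality. The resulting coefficient is bounded by its maximum over $\rho$, giving $\gamma(t)^2$.

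The main obstacle is the bookkeeping in this last step: the Hessian term must be bounded with a coefficient $\tfrac23+\tfrac{2t}{9}(\rho+\sqrt{2(3-\rho^2)})$. This means controlling $\sum_k(\sigma_k-1)^2/\sigma_k$ and the mixing between the weight $1/\sigma_k$ and the normalization by $|\gradj u^i|$, in terms of $t$ and $\rho$. The $\sqrt{2(3-\rho^2)}$ term is the maximum of a linear functional of the deviations $(\sigma_k-1)$ subject to $\sum(\sigma_k-1)^2=t^2$ and $\sum(\sigma_k-1)=t\rho$. To get it, I would first expand $1/\sigma_k\approx 1-(\sigma_k-1)+\dots$ to linear order in $t$, then optimize over the constrained sphere of deviations, and check that the higher-order remainder has the right sign on $t<\sqrt{2/3}$.
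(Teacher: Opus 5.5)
Your first-variation computation is correct and is the paper's formula written in an eigenframe. The paper uses $\tfrac{d}{ds}\Phi^2=\langle I-\G^{-1/2},\dot\G\rangle_F$ and writes the Hessian part as $\sum_i\gradc^2u^i(X_i,\cdot)$ with $X_i=\sum_j(I-\G^{-1/2})_{ij}\gradj u^j$; this is exactly your $\sum_k(1-\sigma_k^{-1})\gradc^2\tilde u^k(\gradj\tilde u^k,\cdot)$. Your treatment of the electric term also matches the paper. The inequality you need there is $(\G^{1/2})_{ii}\le\sqrt{\G_{ii}}=|\gradj u^i|$, which follows from Cauchy--Schwarz in the eigenbasis. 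As written, your ``trace versus diagonal entries'' justification is an identity, not an inequality. Also, $|\Ej|=|\E|$ is false in general, but you do not need it: the $u^i$ and the charged Hessian live on $(\Mj,\gj,\Ej)$, so $\Ej$ appears in $\partial_v\G$ from the start.

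The genuine gap is the Hessian estimate. Cauchy--Schwarz with weights $\sigma_k$ in the rotated frame produces $\sum_k|\gradc^2\tilde u^k|^2/\sigma_k=\tr(\G^{-1/2}M)$, where $M_{ij}=\langle\gradc^2u^i,\gradc^2u^j\rangle$. This is \emph{not} bounded by the target. By convexity, $(\G^{-1/2})_{ii}\ge 1/(\G^{1/2})_{ii}\ge 1/|\gradj u^i|$. So if only $\gradc^2u^1\neq0$, your right-hand side strictly exceeds $\sum_i|\gradc^2u^i|^2/|\gradj u^i|$ unless the first coordinate vector is an eigenvector of $\G$. Your proposed remedy (expanding $1/\sigma_k$ to first order and checking the sign of the remainder) would not produce the exact $\gamma$, and it looks for $\sqrt{2(3-\rho^2)}$ in the wrong place.

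The fix is to use unit weights together with the rotation invariance you already noted:
\[
\Big(\sum_k\sqrt{\tfrac23}\,|\sigma_k-1|\,|\gradc^2\tilde u^k|\Big)^2\le\tfrac23\,t^2\sum_i|\gradc^2u^i|^2\le\tfrac23\,t^2\max_i|\gradj u^i|\sum_i\frac{|\gradc^2u^i|^2}{|\gradj u^i|}.
\]
Alternatively, do as the paper does: stay in the original basis, use $|\gradc^2u^i(X_i,\cdot)|\le\sqrt{2/3}\,|\gradc^2u^i||X_i|$, weight by $|\gradj u^i||X_i|^2$, and use $\sum_i|X_i|^2=\Phi^2$. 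Either way the Hessian coefficient is $\tfrac23t^2\max_i|\gradj u^i|$. Then $|\gradj u^i|=\sqrt{\G_{ii}}\le\sigma_1=1+\xi_1$, and $\xi_1\le\tfrac t3\big(\rho+\sqrt{2(3-\rho^2)}\big)$ is the maximum of the single deviation $\xi_1$ subject to $\sum_k\xi_k^2=t^2$ and $\sum_k\xi_k=t\rho$. That is the entire origin of the middle term of $\gamma$. Add the electric coefficient $t^2(t+\rho)^2/(3+t\rho)$ at the same $\rho$, divide by $\Phi^2=t^2$, and maximize over $|\rho|\le\sqrt3$ to obtain the lemma.
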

\begin{proof}
    Let $A=\G^{1/2}$. Since $A$ is symmetric, the definition of the Frobenius norm yields
    \begin{align}
        \Phi^2 = \norm{A-I}_F^2
        = \tr\lp(A-I)(A-I)^T\rp
        = \tr{\G}-2\tr{A}+3.
    \end{align}
    Take $\G(s)$ to be a smooth matrix curve. Then
    \begin{align}
        \frac{d}{ds}\G = \frac{d}{ds}A^2 = \dot{A}A+A\dot{A}.
    \end{align}
    Multiplying by $A^{-1}$ and tracing yields
    \begin{align}
        \tr(A^{-1}\dot{\G}) = \tr(A^{-1}\dot{A}A+\dot{A})=2\tr{\dot{A}}.
    \end{align}
    Hence
    \begin{align}
        \frac{d}{ds}\Phi^2
        = \tr[(I-A^{-1})\dot{\G}]
        = \langle I-A^{-1},\dot{\G}\rangle_F,
    \end{align}
    where $\langle\cdot,\cdot\rangle_F$ is the Frobenius inner product. Next we compute $\dot{\G}$ using the affine derivative $\gradj^{\Ej}$ defined in \eqref{def:chargedconnection}. We have
    \begin{align}
        \dot{\G}_{ij}
        &= \Big\langle\gradj^{\Ej}\langle\gradj u^i,\gradj u^j\rangle,\del_s\Big\rangle
        = \gradc^2 u^i(\gradj u^j,\del_s) + \gradc^2 u^j(\gradj u^i, \del_s) - 2\G_{ij}\langle\Ej,\del_s\rangle,
    \end{align}
    where all inner products are with respect to $\gj$. Thus
    \begin{align}
        \frac{d}{ds}\Phi^2
        = \langle I-A^{-1},\dot{\G}\rangle_F
        &= 2 \sum_{i,j=1}^3 (I-A^{-1})_{ij} \lp \gradc^2 u^i(\gradj u^j,\del_s) - \G_{ij}\langle\Ej,\del_s\rangle \rp.
    \end{align}
    Define $X_i = \sum_{j=1}^3 (I-A^{-1})_{ij} \gradj u^j$. Then the calculations above imply
    \begin{align}
        \gradj\Phi = \frac{1}{\Phi} \lc \sum_{i=1}^3 \gradc^2 u^i(X_i,\cdot) - \langle I-A^{-1},\G\rangle_F \,\Ej \rc.
    \end{align} 
    Since $\gradc^2 u^i$ is symmetric and trace-free, we may apply \Cref{lem:tracelessestimate} to get
    \begin{align}
        |\gradj\Phi|
        \leq \frac{1}{\Phi} \lc \sum_{i=1}^3 \sqrt{\frac{2}{3}} |\gradc^2 u^i||X_i| + \lv\langle I-A^{-1},\G\rangle_F\rv |\Ej| \rc.
    \end{align}
    Square this inequality and apply a weighted Cauchy-Schwarz with the weights
    \begin{equation}
        \begin{aligned}
            a_i &= \frac{2}{3}|\gradj u^i||X_i|^2 \quad \text{for} \quad i=1,2,3, \\
            a_4 &=  \frac{\langle I-A^{-1},\G\rangle_F^2}{\sum_{i=1}^3 |\gradj u^i|},
        \end{aligned}
    \end{equation}
    to obtain the bound
    \begin{equation}
        \begin{aligned}
            |\gradj\Phi|^2
            &\leq \frac{1}{\Phi^2} \lp \sum_{n=1}^4 a_n \rp \lp \sum_{i=1}^3 \frac{2}{3} \frac{|\gradc^2 u^i|^2|X_i|^2}{a_i} + \frac{\langle I-A^{-1},\G\rangle_F^2 |\Ej|^2}{a_4} \rp \\
            &= \frac{1}{\Phi^2} \lp \sum_{n=1}^4 a_n \rp \lp \sum_{i=1}^3 \frac{|\gradc^2 u^i|^2}{|\gradj u^i|} + |\gradj u^i||\Ej|^2 \rp
        \end{aligned}
    \end{equation}
    It remains to estimate the coefficient $\sum_{n=1}^4 a_n$. We compute
    \begin{equation}
        \begin{aligned}
            \sum_{i=1}^3 |X_i|^2
            &= \tr\lp (I-A^{-1})\G(I-A^{-1}) \rp
            = \tr((A-I)^2)
            = \Phi^2 \\
            \langle I-A^{-1},\G\rangle_F^2
            &= \ls \tr(A-I)^2 + \tr(A-I) \rs^2
            = \Phi^2 (\rho+\Phi)^2,
        \end{aligned}
    \end{equation}
    where $\rho$ is defined as in the proof of \Cref{lem:gradu} and satisfies $|\rho|\leq\sqrt{3}$. By \Cref{lem:gradu}, we have
    \begin{align}
        \sum_{n=1}^4 a_n
        \leq \max_{|\rho|\leq\sqrt{3}} \Phi^2 \lc \frac{2}{3}\lp 1 + \frac{\Phi}{3} \ls \rho+\sqrt{2(3-\rho^2)} \rs \rp + \frac{(\rho+\Phi)^2}{3+\rho\Phi} \rc,
    \end{align}
    which implies the result.
\end{proof}

Fix any regular values $0<t_- < t_+ < b$ and let $H:[0,b]\to\R$ satisfy
\begin{align}\label{def:normalization}
    H(0) = 0 \quad\text{and}\quad 
    H'(t) = \frac{C_b(t)}{\gamma(t)}, \quad\text{where}\quad
    C_b(t)=
    \begin{cases}
        \dfrac{\bun{K}(t_-;b)}{\bun{K}(t;b)}, & 0<t<t_- \vspace{0.5em} \\
        1, &t_- \leq t \leq t_+ \vspace{0.5em} \\
        \dfrac{\bar{K}(t_+)}{\bar{K}(t)}, & t_+ < t \leq b.
    \end{cases}
\end{align}
Observe that $H(t)$ is $C^1$ across $t=t_\pm$. Define the normalized defect
\begin{align}
    \tPhi = H(\Phi).
\end{align}
For the remainder of the section, all quantities computed in terms of the normalized defect will be decorated with $\sim$, e.g. $\tSigma_\beta=\tPhi^{-1}(\beta)$ and $\tOmega_\alpha^\beta=\tPhi^{-1}([\alpha,\beta])$. Since $H(t)$ is strictly increasing, the sublevel sets of $\tPhi$ and $\Phi$ are related by
\begin{align}
    \Omega_a^b = \Phi^{-1}([a,b]) = \tPhi^{-1}([H(a),H(b)]) = \tOmega_{H(a)}^{H(b)}.
\end{align}
The following is an immediate consequence of \Cref{lem:gradphi}. 
\begin{corollary}\label{cor:gradtildephi}
    The following holds pointwise on $\{0<\Phi<1\}$:
    \begin{align}
        |\gradj\tPhi|^2 \leq C_b(\Phi)^2 \sum_{i=1}^3 \frac{|\gradc^2 u^i|^2}{|\gradj u^i|} + |\gradj u^i||\Ej|^2.
    \end{align}
\end{corollary}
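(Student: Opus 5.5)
This follows from \Cref{lem:gradphi} by the chain rule. On $\{0<\Phi<1\}$, $\Phi$ is well-defined and Lipschitz (indeed smooth wherever $\G$ is positive definite, which holds on $\{\Phi<1\}$ by \Cref{lem:defectproperties}). The normalization $H$ is $C^1$ on $[0,b]$: it is piecewise smooth with matching derivatives at $t_\pm$ by construction. Hence $\tPhi=H(\Phi)$ satisfies
\begin{align*}
    \gradj\tPhi = H'(\Phi)\,\gradj\Phi = \frac{C_b(\Phi)}{\gamma(\Phi)}\,\gradj\Phi
\end{align*}
wherever $\Phi$ is differentiable and $\Phi\le b$. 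Squaring gives $|\gradj\tPhi|^2 = C_b(\Phi)^2\gamma(\Phi)^{-2}|\gradj\Phi|^2$.

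Substituting the pointwise bound of \Cref{lem:gradphi}, $|\gradj\Phi|^2\le \gamma(\Phi)^2\sum_i\bigl(|\gradc^2u^i|^2/|\gradj u^i| + |\gradj u^i||\Ej|^2\bigr)$, the factor $\gamma(\Phi)^2$ cancels exactly and yields the claimed inequality.

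Only two side points need checking.
\begin{enumerate}
    \item We need $\gamma(t)>0$, so that $H'$ is well-defined. This is clear, since the bracket in the definition of $\gamma$ contains the term $\tfrac23$ plus terms whose maximum over $|\rho|\le\sqrt3$ is nonnegative (take $\rho=0$), and $3+t\rho>0$ for $t<1$.
    \item $H$ is only defined on $[0,b]$, so the statement should be read on $\{0<\Phi\le b\}$, or with $H$ extended as any $C^1$ increasing function with $H'=C_b/\gamma$ continued past $b$ by the formula $\bar K(t_+)/(\bar K(t)\gamma(t))$. That formula makes sense for all $t<1$ except near $t=\sqrt3$, which lies outside the range.
\end{enumerate}

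On the measure-zero set where $\Phi$ fails to be differentiable, the inequality is interpreted almost everywhere, as in \Cref{lem:kato}. There is no real obstacle. The only care needed is the $C^1$ matching of $H$ at $t_\pm$, which ensures the chain rule holds without a singular contribution.
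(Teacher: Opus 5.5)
Your proposal is correct and is exactly the paper's argument: the paper records the corollary as an immediate consequence of \Cref{lem:gradphi}, via $\gradj\tPhi = H'(\Phi)\gradj\Phi = \frac{C_b(\Phi)}{\gamma(\Phi)}\gradj\Phi$ followed by cancellation of $\gamma(\Phi)^2$. Your observation that the statement only makes sense on $\{0<\Phi\le b\}$, where $H$ and $C_b$ are defined and which is all that is used later, is a fair reading; your almost-everywhere caveat is unnecessary, since $\Phi$ is smooth on $\{0<\Phi<1\}$ by \Cref{lem:defectproperties}.
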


For any $0<\alpha<\beta<H(1)$, the \emph{increasing distribution function of $\tPhi\vert_{\tOmega_\alpha^\beta}$} is given by
\begin{align}
    \V_\alpha:[\alpha,\beta]\to[0,\infty), \qquad \V_\alpha(\tau) = \Vol_{\gj}(\tOmega_\alpha^\tau).
\end{align}
Note that we have restricted the domain of $\tPhi$ in the above definition only to guarantee that it is well-defined and sufficiently regular (see \Cref{lem:defectproperties}). The \emph{increasing rearrangement of $\tPhi$} is then defined as the increasing distribution function of $\V_\alpha$.

\begin{lemma}[Increasing distribution of $\tPhi$]\label{lem:increasingdistributionproperties}
    For any regular values $0<\alpha<\beta<H(1)$ of $\tPhi$,
    \begin{enumerate}
        \item $\V_\alpha(\tau)$ is increasing in $\tau$ with $\V_\alpha(\alpha) = 0$ and $\V_\alpha(\beta) = \Vol_{\gj}(\tOmega_\alpha^\beta)$,
        \item $\V_\alpha(\tau)$ is absolutely continuous and differentiable almost everywhere, with
        \begin{align*}
            \frac{d\V_\alpha}{d\tau} = \int_{\tSigma_\tau} \frac{dA_{\gj}}{|\gradj \tPhi|}.
        \end{align*}
    \end{enumerate}
\end{lemma}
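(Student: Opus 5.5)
Item (1) follows from the definition and set inclusions. Item (2) comes from the coarea formula applied on $\tOmega_\alpha^\beta$, where $\tPhi$ is smooth enough. Throughout I take $\tPhi$ to be $C^1$ (indeed locally Lipschitz) on $\{0<\Phi<1\}$, as provided by \Cref{lem:defectproperties}. Since $H$ is $C^1$ and strictly increasing with $H'>0$, the regular values and level sets of $\tPhi$ correspond exactly to those of $\Phi$.

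For (1): if $\alpha\le\tau_1\le\tau_2\le\beta$, then $\tOmega_\alpha^{\tau_1}\subset\tOmega_\alpha^{\tau_2}$, so $\V_\alpha$ is increasing. $\V_\alpha(\beta)=\Vol(\tOmega_\alpha^\beta)$ by definition. $\V_\alpha(\alpha)=\Vol(\tSigma_\alpha)$, and this vanishes because $\alpha$ is a regular value: $\tSigma_\alpha$ is then a smooth embedded hypersurface of measure zero. Finiteness of all these volumes also needs checking. $\tOmega_\alpha^\beta$ is closed and stays away from the asymptotically flat end (where $\Phi\to0$) and from the cylindrical ends (where $\Phi\to\sqrt3$), because $0<H^{-1}(\alpha)$ and $H^{-1}(\beta)<1$. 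Hence it is compact.

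For (2): write $\V_\alpha(\tau)=\Vol(\tOmega_\alpha^\tau\cap\{\gradj\tPhi=0\})+\Vol(\tOmega_\alpha^\tau\cap\{\gradj\tPhi\neq0\})$. Apply the coarea formula to the second piece with the integrand $\mathds{1}_{\{\gradj\tPhi\ne0\}}/|\gradj\tPhi|$:
\[
\Vol\big(\tOmega_\alpha^\tau\cap\{\gradj\tPhi\neq0\}\big)=\int_\alpha^\tau\int_{\tSigma_s\cap\{\gradj\tPhi\ne0\}}\frac{dA_{\gj}}{|\gradj\tPhi|}\,ds .
\]
The inner integral is finite for a.e. $s$, since the total is at most $\Vol(\tOmega_\alpha^\beta)<\infty$. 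So this piece is absolutely continuous in $\tau$, with a.e. derivative equal to the inner integral. By Sard's theorem a.e. $s$ is a regular value, so on such levels $\tSigma_s\cap\{\gradj\tPhi\ne0\}=\tSigma_s$.

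The main obstacle is the critical set $Z=\{\gradj\tPhi=0\}\cap\tOmega_\alpha^\beta$. If $Z$ had positive measure, $\V_\alpha$ could pick up a singular part, or jumps at levels where $\tPhi$ is constant on an open set. I would first try to exclude this with \Cref{lem:defectproperties}. That lemma presumably gives real-analyticity of $\Phi$, or at least unique continuation, using the elliptic system satisfied by the $u^i$ and the fact that $\G^{1/2}$ is analytic where $\G>0$ (which holds on $\{\Phi<1\}$). Combined with the standing assumption that $\Phi$ is non-constant, this makes the critical set of $\Phi$ a null set, so $\Vol(Z)=0$. If analyticity is not available, the fallback is to argue directly: for a $C^{1}$ function, the coarea formula shows $\int_Z|\gradj\tPhi|=0$, and the map $\tau\mapsto\Vol(Z\cap\tOmega_\alpha^\tau)$ is increasing with derivative supported on the critical values $\tPhi(Z)$, which are null by Sard. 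One then needs $Z$ to be null to rule out a Cantor-type part; I expect \Cref{lem:defectproperties} to supply exactly this regularity. Once $\Vol(Z)=0$, $\V_\alpha$ equals the absolutely continuous piece above, which proves (2).
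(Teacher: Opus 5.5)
The paper states this lemma without proof, so the question is whether your argument closes it. Part (1), the compactness of $\tOmega_\alpha^\beta$, and the coarea splitting $\V_\alpha=\V_{\mathrm{ac}}+\V_{\mathrm{s}}$ with $\V_{\mathrm{s}}(\tau)=\Vol_{\gj}(Z\cap\tOmega_\alpha^\tau)$ are fine. One small correction: Sard should be applied to the smooth function $\Phi$ and then transported by the $C^1$ diffeomorphism $H$, not applied directly to the merely $C^1$ function $\tPhi$.

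The genuine gap is the step you flag and then defer. Absolute continuity of $\V_\alpha$ is in fact \emph{equivalent} to $\Vol_{\gj}(Z)=0$. The Stieltjes measure of $\V_{\mathrm{s}}$ is the push-forward under $\tPhi$ of $\Vol_{\gj}$ restricted to $Z$. It is carried by the Lebesgue-null set $\tPhi(Z)$ of critical values, so it is singular, and it vanishes only if $\Vol_{\gj}(Z)=0$.

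\Cref{lem:defectproperties} does not supply this. It gives smoothness, properness, full measure of regular values, the boundary structure of $\Omega_{t_1}^{t_2}$, and the local diffeomorphism property, and says nothing about the size of the critical set. The tools you suggest do not supply it either:
\begin{itemize}
\item Real-analyticity is unavailable, because $\gj$ and $\Ej$ are only smooth, so the $u^i$ are only $C^\infty$.
\item Unique continuation holds for each $u^i$ but does not pass to $\Phi$, which is a nonlinear function of $dU$ and satisfies no elliptic equation.
\item The standing assumption that $\Phi$ is non-constant is too weak. A smooth non-constant function can be flat exactly on a fat Cantor set, and its distribution function is then continuous and strictly increasing but not absolutely continuous.
\end{itemize}
Nothing in the hypotheses excludes an open set on which $\Ej=0$, $\gj$ is flat and $U$ is affine. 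There $\G$, hence $\Phi$, is constant, and $\V_\alpha$ jumps.

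So, as stated, the absolute continuity in (2) needs an additional hypothesis, such as $\Vol_{\gj}(\{\gradj\Phi=0\}\cap\Omega_a^b)=0$. With that hypothesis your argument is complete. Without it, your decomposition proves a weaker statement: $\V_\alpha$ is increasing and differentiable a.e., with $\V_\alpha'(\tau)=\int_{\tSigma_\tau}|\gradj\tPhi|^{-1}\,dA_{\gj}$ for a.e.\ $\tau$, and $\V_\alpha(\tau_2)-\V_\alpha(\tau_1)\geq\int_{\tau_1}^{\tau_2}\V_\alpha'(s)\,ds$. The derivative formula holds because $\V_{\mathrm{s}}'=0$ a.e.\ for a singular measure and a.e.\ level is regular.

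You should either state and prove that weaker version, or make the extra hypothesis explicit rather than attributing it to \Cref{lem:defectproperties}. For example, the H\"older step in \Cref{lem:volumeenergy} only needs $\int_\tau^\beta\V_\alpha'(s)\V_\alpha(s)^{-4/3}\,ds\leq 3\bigl(\V_\alpha(\tau)^{-1/3}-\V_\alpha(\beta)^{-1/3}\bigr)$, which holds for any increasing $\V_\alpha$.
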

\begin{lemma}[Increasing rearrangement of $\tPhi$]\label{lem:increasingrearrangementproperties}
    For any regular values $0<\alpha<\beta<H(1)$ of $\tPhi$,
    \begin{enumerate}
        \item $\tPhi_*(v)$ is increasing in $v$ with $\tPhi_*(0)=\alpha$ and $\tPhi_*(\V_\alpha(\beta)) = \beta$,
        \item $\tPhi_*(v) = \min\{\tau\in[\alpha,\beta]\,:\,\V_\alpha(\tau)\geq v\}$ for all $0\leq v\leq\V_\alpha(\beta)$,
        \item $\tPhi_*(\V_\alpha(\tau))=\tau$ and $\V_\alpha(\tPhi_*(v))\leq v$.
    \end{enumerate}
\end{lemma}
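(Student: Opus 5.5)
The statement to prove is \Cref{lem:increasingrearrangementproperties}. By \Cref{lem:increasingdistributionproperties}, $\V_\alpha$ is continuous and nondecreasing on $[\alpha,\beta]$, with $\V_\alpha(\alpha)=0$ and $\V_\alpha(\beta)=\Vol_{\gj}(\tOmega_\alpha^\beta)$. I would take as the working definition of the rearrangement the generalized inverse
\begin{align*}
    \tPhi_*(v)=\inf\{\tau\in[\alpha,\beta]\,:\,\V_\alpha(\tau)\geq v\}, \qquad 0\leq v\leq \V_\alpha(\beta).
\end{align*}
This is the increasing distribution function of $\V_\alpha$, read on the value side. All three items then reduce to elementary facts about generalized inverses of continuous monotone functions.

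First I would prove item (2). The set $\{\tau\in[\alpha,\beta]:\V_\alpha(\tau)\geq v\}$ is nonempty, since $\beta$ belongs to it for $v\leq\V_\alpha(\beta)$. It is closed, being the preimage of a closed set under the continuous map $\V_\alpha$. It is an upper interval because $\V_\alpha$ is nondecreasing. Hence the set equals $[\tau_v,\beta]$ for some $\tau_v$, and the infimum is attained, so it is a minimum.

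Item (1) follows from this description. Monotonicity holds because the sets shrink as $v$ increases, so their minima increase. At $v=0$ the set is all of $[\alpha,\beta]$, so $\tPhi_*(0)=\alpha$.

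The value $\tPhi_*(\V_\alpha(\beta))=\beta$, and likewise item (3), requires that $\V_\alpha$ be strictly increasing. This is the step where some geometric input is needed, and I expect it to be the main obstacle. For $\alpha\leq\tau_1<\tau_2\leq\beta$, the difference $\V_\alpha(\tau_2)-\V_\alpha(\tau_1)$ is the volume of $\tPhi^{-1}((\tau_1,\tau_2])$.
\begin{itemize}
    \item This region is nonempty: $\tPhi$ is continuous on the connected set involved, takes the values $\alpha$ and $\beta$ (these are regular values, so the corresponding level sets are nonempty hypersurfaces), and hence attains every intermediate value.
    \item It contains an open set, since $\tPhi^{-1}((\tau_1,\tau_2))$ is open and nonempty by the intermediate value property.
\end{itemize}
An open nonempty set has positive volume, so $\V_\alpha$ is strictly increasing.

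Given strict monotonicity, the remaining claims are immediate. For $v=\V_\alpha(\tau)$, every $\tau'<\tau$ satisfies $\V_\alpha(\tau')<v$, so the minimum in (2) is exactly $\tau$; this gives $\tPhi_*(\V_\alpha(\tau))=\tau$, and in particular $\tPhi_*(\V_\alpha(\beta))=\beta$. The inequality $\V_\alpha(\tPhi_*(v))\leq v$ comes from continuity. If $\tPhi_*(v)=\alpha$, the left side is $0$. Otherwise, $\V_\alpha(\tau)<v$ for all $\tau<\tPhi_*(v)$, and letting $\tau\uparrow\tPhi_*(v)$ gives the inequality; in fact equality holds.
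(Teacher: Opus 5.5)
The paper states this lemma without proof, treating (1)--(3) as standard properties of the generalized inverse of the continuous nondecreasing function $\V_\alpha$. Your treatment of the elementary parts is correct. The set $\{\tau\in[\alpha,\beta]:\V_\alpha(\tau)\geq v\}$ is a closed upper interval, so the infimum is a minimum, and monotonicity and $\tPhi_*(0)=\alpha$ follow. Once $\V_\alpha$ is strictly increasing you get $\tPhi_*\circ\V_\alpha=\mathrm{id}$, and indeed $\V_\alpha(\tPhi_*(v))=v$. You also correctly identified strict monotonicity as the only place where geometric input enters. It is exactly what \Cref{lem:volumeenergy} uses when it differentiates $\tPhi_*(\V_\alpha(\tau))=\tau$.

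Your justification of that step, however, does not hold as written. A regular value need not have nonempty preimage: a value outside the image of $\tPhi$ is vacuously regular. So regularity of $\alpha,\beta$ says nothing about $\tSigma_\alpha,\tSigma_\beta$ being nonempty. In addition, your intermediate value argument needs a connected set containing points of both level sets, and neither $\tOmega_\alpha^\beta$ nor $\{\Phi<1\}$ need be connected.

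The nonemptiness has to come from global information, and either of the following supplies it:
\begin{enumerate}
\item Use the standing assumption $\inf_{t\in(0,1)}|\Sigma_t|>0$ of \Cref{sec:area}, which gives $\Sigma_t\neq\emptyset$ for every $t\in(0,1)$.
\item Note that $\Phi$ is continuous on the whole connected manifold $\Mj$, because the Gram matrix is always positive semidefinite and the square root is continuous on such matrices. Since $\Phi\to0$ along $\End_0$ and $\Phi\to\sqrt3$ along the cylindrical ends, $\Phi$ takes every value in $(0,\sqrt3)$ along a path joining them.
\end{enumerate}
Either way, $\tPhi^{-1}((\tau_1,\tau_2))=\Phi^{-1}((H^{-1}(\tau_1),H^{-1}(\tau_2)))$ is a nonempty open subset of $\Mj$ and hence has positive volume. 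The rest of your argument then goes through unchanged. Alternatively, once level sets are known to be nonempty, the formula in \Cref{lem:increasingdistributionproperties}(2) gives $\V_\alpha'(\tau)>0$ at every regular value, and absolute continuity yields strict monotonicity.
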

Lastly, the \emph{radially increasing rearrangement of $\tPhi$} is given by
\begin{align}
    \tPhi_\star(x) = \tPhi_*(\omega_3 |x|^3), \quad x\in\R^3.
\end{align}
Observe that $\tPhi_\star$ is radially increasing and equidistributed with $\tPhi$. That is, the $\tau$-sublevel set of $\tPhi_\star$ is a Euclidean ball of radius $R(\tau)$ with the same volume as the $\tau$-sublevel set of $\tPhi$. 
\begin{align}
    \Vol_{\R^3}\{x\in\R^3\,:\,\tPhi_\star(x)<\tau\} = \Vol_{\R^3}(B_{R(\tau)}) = \Vol_{\gj}(\tOmega_\alpha^\tau)
\end{align}

The \emph{decreasing distribution function of $\tPhi$} may be analogously defined by
\begin{align}
    \V^\beta:[\alpha,\beta]\to[0,\infty), \quad \V^\beta(\tau) = \Vol_{\gj}(\tOmega_\tau^\beta).
\end{align}
Then the \emph{decreasing rearrangement} $\tPhi^*$ is given by the decreasing distribution function of $\V^\beta$ and the \emph{radially decreasing rearrangement} is given by 
\begin{align}
    \tPhi^\star(x)=\tPhi^*(\omega_3|x|^3), \quad x\in\R^3.
\end{align}
The decreasing rearrangements satisfy corresponding versions of the properties stated in \Cref{lem:increasingdistributionproperties,lem:increasingrearrangementproperties}.

\begin{lemma}\label{lem:volumeenergy}
    Let $0<\alpha<\beta<H(1)$ be regular values of $\tPhi$. Then for any $\tau\in\reg{(\alpha,\beta)}{\tPhi}$, the radially increasing and decreasing rearrangements of $\tPhi$ on $\tOmega_\alpha^\beta$ satisfy
    \begin{align}
        \Vol_{\gj}(\tOmega_\alpha^\tau)^\frac{1}{3} 
        &\leq \frac{1}{3\omega_3^{2/3}(\beta-\tau)^2} \int_{\{\tau\leq\tPhi_\star\leq \beta\}} |\grad \tPhi_\star|^2 \, dV_{\R^3}, \\
        \Vol_{\gj}(\tOmega_\tau^\beta)^\frac{1}{3} 
        &\leq \frac{1}{3\omega_3^{2/3}(\tau-\alpha)^2} \int_{\{\alpha\leq\tPhi^\star\leq\tau\}} |\grad\tPhi^\star|^2 \, dV_{\R^3}.
    \end{align}
\end{lemma}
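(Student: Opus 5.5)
The inequality is really a one-dimensional estimate for a radial function on $\R^3$, so I would prove it directly from the definition of $\tPhi_\star$. The only tools needed are the Cauchy--Schwarz inequality along a ray and the identity $4\pi = 3\omega_3$. I give the argument for the increasing rearrangement; the decreasing case is the mirror image.

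\textbf{Setup.} For $\tau\in[\alpha,\beta]$ let $R(\tau)$ be defined by $\omega_3R(\tau)^3=\V_\alpha(\tau)=\Vol_{\gj}(\tOmega_\alpha^\tau)$. By \Cref{lem:increasingrearrangementproperties}, $\tPhi_\star(x)=\tPhi_*(\omega_3|x|^3)$ is radial and nondecreasing in $|x|$. Write $\varphi(r)=\tPhi_\star$ for $|x|=r$.
\begin{itemize}
    \item Property (3) of that lemma, $\tPhi_*(\V_\alpha(\tau))=\tau$, gives $\varphi(R(\tau))=\tau$, and $\varphi(R(\beta))=\beta$.
    \item By monotonicity, the set $\{\tau\le\tPhi_\star\le\beta\}$ contains the closed shell $\{R(\tau)\le|x|\le R(\beta)\}$.
    \item The integrand is nonnegative, so it suffices to bound the energy over this shell from below.
\end{itemize}

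\textbf{Regularity.} I would first check that $\varphi$ is locally absolutely continuous on $[R(\tau),R(\beta)]$, so that the fundamental theorem of calculus applies.
\begin{itemize}
    \item By \Cref{lem:increasingdistributionproperties}, $\V_\alpha$ is absolutely continuous and increasing.
    \item A jump of $\tPhi_*$ could only come from an interval of values on which $\V_\alpha$ is constant, i.e.\ values in $(\alpha,\beta)$ not attained by $\tPhi$ on $\tOmega_\alpha^\beta$. Continuity of $\tPhi$ should rule this out, using the assumption that $\Phi$ is non-constant on the region and \Cref{lem:defectproperties}. I am least sure this step is airtight.
    \item Fallback: if $\varphi$ fails to be in $W^{1,1}_{\mathrm{loc}}$, interpret the Dirichlet energy as $+\infty$, in which case the inequality is trivial.
\end{itemize}
Granting regularity, this is the main obstacle; the estimate itself is elementary.

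\textbf{Estimate.} In polar coordinates,
\begin{align*}
    \int_{\{\tau\le\tPhi_\star\le\beta\}}|\grad\tPhi_\star|^2\,dV_{\R^3}\;\ge\;4\pi\int_{R(\tau)}^{R(\beta)}\varphi'(r)^2r^2\,dr .
\end{align*}
By Cauchy--Schwarz,
\begin{align*}
    (\beta-\tau)^2=\lp\int_{R(\tau)}^{R(\beta)}\varphi'\,dr\rp^2\le\lp\int_{R(\tau)}^{R(\beta)}\varphi'^2r^2\,dr\rp\lp\int_{R(\tau)}^{R(\beta)}\frac{dr}{r^2}\rp .
\end{align*}
The last factor equals $\frac{1}{R(\tau)}-\frac{1}{R(\beta)}\le\frac{1}{R(\tau)}$. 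Hence the energy is at least $4\pi(\beta-\tau)^2R(\tau)$. Since $R(\tau)=\omega_3^{-1/3}\Vol_{\gj}(\tOmega_\alpha^\tau)^{1/3}$ and $4\pi=3\omega_3$,
\begin{align*}
    \Vol_{\gj}(\tOmega_\alpha^\tau)^{1/3}\le\frac{\omega_3^{1/3}}{3\omega_3(\beta-\tau)^2}\int|\grad\tPhi_\star|^2\,dV_{\R^3}=\frac{1}{3\omega_3^{2/3}(\beta-\tau)^2}\int|\grad\tPhi_\star|^2\,dV_{\R^3},
\end{align*}
which is exactly the first claimed inequality.

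\textbf{Decreasing rearrangement.} Here $\tPhi^\star$ decreases radially from $\beta$ at the origin to $\alpha$ at radius $R'(\alpha)$, where $\omega_3R'(s)^3=\V^\beta(s)=\Vol_{\gj}(\tOmega_s^\beta)$. The set $\{\alpha\le\tPhi^\star\le\tau\}$ contains the shell $\{R'(\tau)\le|x|\le R'(\alpha)\}$, across which $\tPhi^\star$ drops by $\tau-\alpha$. The same Cauchy--Schwarz computation gives the energy lower bound $4\pi(\tau-\alpha)^2R'(\tau)$, and hence the second inequality.
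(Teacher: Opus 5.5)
Your argument is correct and is essentially the paper's proof. The paper also passes to polar coordinates, then substitutes $\V_\alpha(\tau)=\omega_3R(\tau)^3$ to rewrite the energy as $9\omega_3^{2/3}\int_\tau^\beta \V_\alpha^{4/3}/\V_\alpha'\,d\tau$ (a formula it reuses in \Cref{lem:polyaszego}) and applies H\"older in $\tau$; under this substitution that is exactly your Cauchy--Schwarz along the ray in $r$.

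On your regularity worry: ruling out jumps only gives continuity of $\tPhi_*$, but what you need is absolute continuity. This holds because $\V_\alpha$ is absolutely continuous with $\V_\alpha'(\tau)=\int_{\tSigma_\tau}|\gradj\tPhi|^{-1}\,dA_{\gj}>0$ for a.e.\ $\tau$. Here regular values have full measure, and $\tSigma_\tau\neq\emptyset$ because $\Phi$ is continuous on the connected $\Mj$ and runs from $0$ to $\sqrt{3}$ between the ends (an intermediate-value argument, not non-constancy). So the inverse $\tPhi_*$ is absolutely continuous, and the paper uses this tacitly in its change of variables.
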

\begin{proof}
    By the definition of $\tPhi_\star$,
    \begin{align}
        |\grad\tPhi_\star(x)| = 3\omega_3 |x|^2 \lv\frac{d\tPhi_*}{dv}\lp\omega_3|x|^3\rp\rv.
    \end{align}
    After changing to polar coordinates,
    \begin{align}
        \int_{\{\tau\leq\tPhi_\star\leq \beta\}} |\grad \tPhi_\star|^2 \, dV_{\R^3}
        &= \int_{R(\tau)}^{R(\beta)} \lp \int_{S_r} (3\omega_3)^2 r^4 \lv \frac{d\tPhi_*}{dv} \lp\omega_3 r^3\rp \rv^2 d\sigma_r \rp dr \\
        &= (3\omega_3)^3 \int_{R(\tau)}^{R(\beta)} r^6 \lv \frac{d\tPhi_*}{dv}\lp\omega_3 r^3\rp \rv^2 dr,
    \end{align}
    where $d\sigma_r$ is the area element of $S_r\subset\R^3$. By \Cref{lem:increasingdistributionproperties} (2), the following holds almost everywhere.
    \begin{align}
        \frac{d\tPhi_*}{dv}\lp\V_\alpha(\tau)\rp = \frac{1}{\V_\alpha'(\tau)}
    \end{align}
    Since $\V_a(\tau) = \omega_3 R(\tau)^3$, we may change variables to obtain
    \begin{align}\label{lem:volumeenergy:eqn1}
        \int_{\{\tau\leq\tPhi_\star\leq \beta\}} |\grad \tPhi_\star|^2 \, dV_{\R^3}
        = 3^2\omega_3^{2/3} \int_\tau^\beta \frac{\V_\alpha(\tau)^{4/3}}{\V_\alpha'(\tau)} \, d\tau.
    \end{align}
    Then by H\"older's inequality,
    \begin{equation}
        \begin{aligned}
            (\beta-\tau)^2
            &\leq \lp \int_\tau^\beta \frac{\V_\alpha'(\tau)}{\V_\alpha(\tau)^{4/3}} \, d\tau \rp \lp \int_\tau^\beta \frac{\V_\alpha(\tau)^{4/3}}{\V_\alpha'(\tau)} \, d\tau \rp \\
            &= 3\lp \frac{1}{\V_\alpha(\tau)^{1/3}} - \frac{1}{\V_\alpha(\beta)^{1/3}} \rp \int_\tau^\beta \frac{\V_\alpha(\tau)^{4/3}}{\V_\alpha'(\tau)} \, d\tau \\
            &\leq \frac{3}{\Vol_{\gj}(\tOmega_\alpha^\tau)^{1/3}} \int_\tau^\beta \frac{\V_\alpha(\tau)^{4/3}}{\V_\alpha'(\tau)} \, d\tau.
        \end{aligned}
    \end{equation}
    Combining this with \eqref{lem:volumeenergy:eqn1} gives the result. The proof for $\tPhi^\star$ follows in the same manner.
\end{proof}

\begin{lemma}[P\'olya-Szeg\H{o} inequality]\label{lem:polyaszego}
    Suppose $0<a<b<\sqrt{2/3}$ are regular values of $\Phi$ and $\eta\in(0,1)$. Take $b_0\in(0,b]$ to be the regular value of $\Phi$ given by \Cref{lem:isoperimetric}. 
    \begin{enumerate}
        \item If $a \geq b_0$, then the radially increasing rearrangement of $\tPhi$ on $\Omega_a^b$ satisfies
        \begin{align*}
            \int_{\{H(t)\leq\tPhi_\star\leq H(b)\}} |\grad \tPhi_\star|^2 \, dV_{\R^3}
            &\leq \lp 3\omega_3^{1/3}C_\eta\rp^2 \int_{\Omega_t^b} \bar{K}(\Phi)^2 |\gradj\tPhi|^2 \, dV_{\gj}, \qquad \forall t\in\reg{[a,b]}{\Phi}.
        \end{align*}

        \item If $a\leq b_0$, then the radially decreasing rearrangement of $\tPhi$ on $\Omega_a^b$ satisfies
        \begin{align*}
            \int_{\{H(a)\leq\tPhi^\star\leq H(t)\}} |\grad \tPhi^\star|^2 \, dV_{\R^3} 
            &\leq \lp 3\omega_3^{1/3}C_\eta\rp^2 \int_{\tOmega_a^t} \bun{K}(\Phi;b)^2 |\gradj\tPhi|^2 \, dV_{\gj}, \qquad \forall t\in\reg{[a,b]}{\Phi}.
        \end{align*}
    \end{enumerate}
\end{lemma}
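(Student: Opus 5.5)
The argument is the standard Pólya–Szegő derivation by coarea and Cauchy–Schwarz. The only difference from the Euclidean case is that the isoperimetric input is \Cref{lem:isoperimetric}, so the Euclidean isoperimetric constant is replaced by $C_\eta\bar K$ or $C_\eta\bun{K}$. I describe case (1); case (2) is identical with the decreasing rearrangement, the sets $\Omega_t^{b_0}$, and $\bun{K}(\cdot;b)$.

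\emph{Rearranged side.} Set $\alpha=H(a)$ and $\beta=H(b)$, and write $\tau=H(t)$. Since $\tPhi_\star$ is radial, its level set at height $s$ is the sphere of radius $R(s)$, with area $3\omega_3^{1/3}\V_a(s)^{2/3}$. On that sphere, by \Cref{lem:increasingdistributionproperties}(2),
\[
|\grad\tPhi_\star|=\frac{3\omega_3^{1/3}\V_a(s)^{2/3}}{\V_a'(s)} .
\]
The coarea formula, or equivalently the computation behind \eqref{lem:volumeenergy:eqn1}, then gives
\[
\int_{\{\tau\le\tPhi_\star\le\beta\}}|\grad\tPhi_\star|^2\,dV_{\R^3}=\int_\tau^\beta \frac{\lp 3\omega_3^{1/3}\V_a(s)^{2/3}\rp^2}{\V_a'(s)}\,ds .
\]

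\emph{Original side.} For a.e.\ regular $s$, Cauchy–Schwarz on $\tSigma_s$ gives
\[
|\tSigma_s|^2\le \V_a'(s)\int_{\tSigma_s}|\gradj\tPhi|\,dA .
\]
Since $H$ is strictly increasing, $\tSigma_s=\Sigma_{H^{-1}(s)}$.

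\emph{Isoperimetric input.} \Cref{lem:isoperimetric} applies because $H^{-1}(s)\in[a,b]\subset[b_0,b]$. It gives $\Vol(\Omega_{b_0}^{H^{-1}(s)})^{2/3}\le C_\eta\bar K\,|\tSigma_s|$, with $\bar K$ evaluated at $H^{-1}(s)$, which is the value of $\Phi$ on $\tSigma_s$. We need this for $\V_a(s)=\Vol(\Omega_a^{H^{-1}(s)})$ rather than $\Vol(\Omega_{b_0}^{H^{-1}(s)})$. Since $a\ge b_0$, we have $\Omega_a^{\cdot}\subset\Omega_{b_0}^{\cdot}$, so $\V_a(s)\le\Vol(\Omega_{b_0}^{H^{-1}(s)})$. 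This monotonicity is where the hypothesis $a\ge b_0$ enters. In case (2), $a\le b_0$ gives $\Omega_{H^{-1}(s)}^{b}\cap\{\Phi\ge a\}$, and we need the corresponding bound $\Vol(\Omega_{H^{-1}(s)}^{b})\le\ldots$. Here one should use the lemma's estimate on $\Omega_{t}^{b_0}$ together with the upper piece, or apply the lemma's second inequality directly. This bookkeeping is what I expect to require the most care.

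\emph{Combining.} Putting these together,
\[
\lp3\omega_3^{1/3}\V_a(s)^{2/3}\rp^2\le \lp3\omega_3^{1/3}C_\eta\rp^2\bar K^2|\tSigma_s|^2\le\lp3\omega_3^{1/3}C_\eta\rp^2\,\V_a'(s)\int_{\tSigma_s}\bar K(\Phi)^2|\gradj\tPhi|\,dA .
\]
Here $\bar K(\Phi)$ is constant on $\tSigma_s$, so it can be moved inside the integral. Dividing by $\V_a'(s)$, integrating over $s\in[\tau,\beta]$, and applying the coarea formula on $\Mj$ again gives
\[
\int_\tau^\beta\int_{\tSigma_s}\bar K(\Phi)^2|\gradj\tPhi|\,dA\,ds=\int_{\Omega_t^b}\bar K(\Phi)^2|\gradj\tPhi|^2\,dV .
\]
This is the claim.

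\emph{Technical points.} The set of critical values has measure zero, by Sard and \Cref{lem:defectproperties}, so restricting to regular $s$ costs nothing. Where $\V_a'$ vanishes or is infinite, the integrands are interpreted via the absolute continuity in \Cref{lem:increasingdistributionproperties}. The main obstacle is matching the volume normalization in case (2), namely the base point $b_0$ versus the endpoint $b$. I would handle it by noting that $\bun{K}$ is built with $q(b)$ precisely so that the bound holds uniformly on all of $\Omega_{t}^{b}$.
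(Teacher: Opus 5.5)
Your case (1) is the paper's argument: the rearranged energy is expressed through $\V_\alpha$, then $\V_\alpha(s)\le\Vol(\Omega_{b_0}^{H^{-1}(s)})$ together with the first inequality of \Cref{lem:isoperimetric}, then Cauchy--Schwarz on $\tSigma_s$ and coarea. That part is correct. There is a genuine gap in case (2). If the decreasing rearrangement is taken on $\Omega_a^b$, the distribution function is $\Vol(\Omega^b_{H^{-1}(s)})$. \Cref{lem:isoperimetric} gives no bound for this in terms of $|\Sigma_{H^{-1}(s)}|$. Its second inequality controls only $\Vol(\Omega^{b_0}_{t'})$, which is the \emph{smaller} set since $b_0\le b$, and only for $t'\le b_0$. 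Neither of your suggested fixes works. Splitting off the upper piece $\Omega^b_{b_0}$ bounds its volume by $C_\eta\bar K(b)|\Sigma_b|$. That is not comparable to $|\Sigma_{t'}|$, and the level-by-level combination with Cauchy--Schwarz needs a bound by the area of the same level set. The factor $q(b)$ in $\bun{K}(\cdot;b)=p(t)/q(b)^{2/3}$ also does not let the bound extend to $\Omega^b_{t}$. It only replaces the Jacobian lower bound $q(b_0)$ by the $b_0$-independent $q(b)\le q(b_0)$. If you ran the $\chi$-argument on $\Omega^b_{t'}$, the boundary term would be $p(t')|\Sigma_{t'}|+p(b)|\Sigma_b|$. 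The choice of $b_0$ as a near-minimizer of $p|\Sigma|$ says nothing about $p(b)|\Sigma_b|$.

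The missing idea is that in case (2) the volume comparison must come from the top endpoint, not the bottom one. Take the decreasing rearrangement on $\Omega_a^{b_0}$, which is possible because $a\le b_0$, and restrict to $t\in\reg{[a,b_0]}{\Phi}$. Then the distribution function is $\V^{H(b_0)}(s)=\Vol(\Omega^{b_0}_{H^{-1}(s)})$ with $H^{-1}(s)\in[a,t]\subset(0,b_0]$. The second inequality of \Cref{lem:isoperimetric} now applies verbatim, and your case (1) computation goes through with $|(\V^{H(b_0)})'|$ in place of $\V_\alpha'$. This is what the paper's ``identical manner'' amounts to, and it is how the lemma is actually used. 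In Case 1 of the proof of \Cref{thm:area}, the decreasing rearrangement is taken on $\tOmega_\varepsilon^{\beta_0}$, not on $\tOmega_\varepsilon^{\beta}$. Read literally, with $\Omega_a^b$ and $t\in[a,b]$, statement (2) is not what the argument proves. So you were right to distrust the bookkeeping, but the fix is to change the domain of the rearrangement, not to appeal to the normalization of $\bun{K}$.
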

\begin{proof}
    Suppose $a \geq b_0$ and let $t\in\reg{[a,b]}{\Phi}$. Set
    \begin{align}
        \alpha = H(a), \quad \beta=H(b), \quad \tau=H(t).
    \end{align}
    Since $\V_\alpha(\tau) = \Vol_{\gj}(\Omega_a^t) \leq \Vol_{\gj}(\Omega_{b_0}^t)$, we may apply \Cref{lem:isoperimetric} to \eqref{lem:volumeenergy:eqn1} and obtain
    \begin{align}
        \int_{\{\tau\leq\tPhi_\star\leq\beta\}} |\grad\tPhi_\star|^2 \, dV_{\R^3}
        = 3^2\omega_3^{2/3} \int_\tau^\beta \frac{\V_\alpha(\tau)^{4/3}}{\V_\alpha'(\tau)} \, d\tau
        \leq \lp 3\omega_3^{1/3}C_\eta\rp^2 \int_\tau^\beta \bar{K}(H^{-1}(\tau))^2 \frac{|\tSigma_\tau|^2}{\V_\alpha'(\tau)} \, d\tau.
    \end{align}
    For almost every $\tau\in[\alpha,\beta]$, we have by \Cref{lem:increasingdistributionproperties} (2)
    \begin{align}
        |\tSigma_\tau|^2 
        \leq \lp \int_{\tSigma_\tau} \frac{dA_{\gj}}{|\gradj\tPhi|} \rp \lp \int_{\tSigma_\tau} |\gradj\tPhi| \, dA_{\gj} \rp
        = \V_\alpha'(\tau) \int_{\tSigma_\tau} |\gradj\tPhi| \, dA_{\gj}.
    \end{align}
    Combining this with the previous inequality yields
    \begin{align}
        \int_{\{\tau\leq\tPhi_\star\leq\beta\}} |\grad\tPhi_\star|^2 \, dV_{\R^3}
        \leq \lp 3\omega_3^{1/3}C_\eta\rp^2 \int_{\tOmega_\tau^\beta} \bar{K}(\Phi)^2 |\gradj\tPhi|^2 \, dV_{\gj},
    \end{align}
    as desired. The proof when $a\leq b_0$ follows in an identical manner.
\end{proof}

\begin{proof}[Proof of \Cref{thm:area}]
    Let $b\in\reg{(0,\sqrt{2/3})}{\Phi}$, $\eta\in(0,1)$, and take $b_0\in(0,b]$ to be the regular value of $\Phi$ given by \Cref{lem:isoperimetric}. For regular values $0<t_-<t_+<b$ of $\Phi$, take $\tPhi$ to be the normalized defect defined in \eqref{def:normalization}. We denote
    \begin{align}
        \beta=H(b), \quad \beta_0=H(b_0), \quad \tau_+=H(t_+), \quad \tau_-=H(t_-),
    \end{align}
    and introduce the effective lengths
    \begin{align}\label{def:lengths}
        I_- = \frac{\tau_-}{\bun{K}(t_-;b)}, \qquad
        I_0 = \tau_+ - \tau_-, \qquad
        I_+ = \frac{\beta - \tau_+}{\bar{K}(t_+)}.
    \end{align}
    Fix any $\tau_0\in[\tau_-,\tau_+]$ such that
    \begin{align}\label{thm:area:eqn1}
        |\tSigma_{\tau_0}|
        \leq \frac{1}{I_0} \int_{\tau_-}^{\tau_+} |\tPhi^{-1}(\tau)|\,d\tau
        \leq \frac{1}{I_0} \Vol_{\gj}(\tOmega_{\tau_-}^{\tau_+})^{1/2}\lp \int_{\tOmega_{\tau_-}^{\tau_+}} |\gradj\tPhi|^2 \, dV_{\gj} \rp^{1/2},
    \end{align}
    where the second inequality follows by the co-area formula and H\"older's inequality. We proceed by cases depending on where $\beta_0$ lies with respect to the partition $0<\tau_-<\tau_+<\beta$. 

    \emph{\underline{Case 1}: $\tau_+\leq\beta_0<\beta$.} Fix any $0<\varepsilon<\tau_-$ and take $\tPhi^\star$ to be the radially decreasing rearrangement of $\tPhi$ on $\tOmega_\varepsilon^{\beta_0}$. By the results of this section,
    \begin{equation}
    \begin{aligned}
        \Vol_{\gj}(\tOmega_{\tau_-}^{\tau_+})^{1/3}
        \leq \Vol_{\gj}(\tOmega_{\tau_-}^{\beta_0})^{1/3}
        &\leq \frac{1}{3\omega_3^{2/3}(\tau_--\varepsilon)^2} \int_{\{\varepsilon\leq\tPhi^\star\leq\tau_-\}} |\grad\tPhi^\star|^2 \, dV_{\R^3} && \text{(Lem.~\ref{lem:volumeenergy})} \\
        &\leq \frac{3C_\eta^2}{(\tau_--\varepsilon)^2} \int_{\tOmega_\varepsilon^{\tau_-}} \bun{K}(\Phi;b)^2|\gradj\Phi|^2 \, dV_{\gj} && \text{(Lem.~\ref{lem:polyaszego})} \\
        &\leq 3C_\eta^2\ls\frac{\bun{K}(t_-;b)}{\tau_--\varepsilon}\rs^2 \I(\tOmega_\varepsilon^{\tau_-}), && \text{(Cor.~\ref{cor:gradtildephi})}
    \end{aligned}
    \end{equation}
    where $\I(\Omega)$ is the integral quantity defined for any $\Omega\subset\Mj$ by
    \begin{align}
        \I(\Omega) \equiv \sum_{i=1}^3 \int_\Omega \frac{|\gradc^2 u^i|^2}{|\gradj u^i|}+|\gradj u^i||\Ej|^2 \, d\mu_\Omega.
    \end{align}
    
    \emph{\underline{Case 2}: $0<\beta_0\leq\tau_-$.} Let $\tPhi_\star$ be the radially increasing rearrangement of $\tPhi$ on $\tOmega_{\beta_0}^\beta$. Then
    \begin{equation}
    \begin{aligned}
        \Vol_{\gj}(\tOmega_{\tau_-}^{\tau_+})^{1/3}
        \leq \Vol_{\gj}(\tOmega_{\beta_0}^{\tau_+})^{1/3}
        &\leq \frac{1}{3\omega_3^{2/3}(\beta-\tau_+)^2} \int_{\{\tau_+\leq\tPhi_\star\leq\beta\}} |\grad\tPhi_\star|^2 \, dV_{\R^3} && \text{(Lem.~\ref{lem:volumeenergy})} \\
        &\leq \frac{3C_\eta^2}{(\beta-\tau_+)^2} \int_{\tOmega_{\tau_+}^\beta} \bar{K}(\Phi)^2 |\gradj\tPhi|^2 \, dV_{\gj} && \text{(Lem.~\ref{lem:polyaszego})} \\
        &\leq 3C_\eta^2\ls\frac{\bar{K}(t_+)}{\beta-\tau_+}\rs^2 \I(\tOmega_{\tau_+}^\beta). && \text{(Cor.~\ref{cor:gradtildephi})}
    \end{aligned}
    \end{equation}

    \emph{\underline{Case 3}: $\tau_-\leq \beta_0\leq\tau_+$}. By subadditivity,
    \begin{align}
        \Vol_{\gj}(\tOmega_{\tau_-}^{\tau_+})^{1/3}
        \leq \Vol_{\gj}(\tOmega_{\tau_-}^{\beta_0})^{1/3} + \Vol_{\gj}(\tOmega_{\beta_0}^{\tau_+})^{1/3}.
    \end{align}
    The summands may then be treated by the previous cases.
    \begin{align}
        \Vol_{\gj}(\tOmega_{\tau_-}^{\tau_+})^{1/3}
        &\leq 3C_\eta^2 \max\lc\frac{\bun{K}(t_-;b)}{\tau_--\varepsilon},\frac{\bar{K}(t_+)}{\beta-\tau_+} \rc^2 \I(\tOmega_\varepsilon^{\tau_-}\cup\tOmega_{\tau_+}^\beta)
    \end{align}
    Since this encompasses the bounds from the previous cases, we may use it to estimate \eqref{thm:area:eqn1} from above
    \begin{align}
        |\tSigma_{\tau_0}|
        \leq \frac{3^{3/2}C_\eta^3}{I_0} \max\lc\frac{\bun{K}(t_-;b)}{\tau_--\varepsilon},\frac{\bar{K}(t_+)}{\beta-\tau_+}\rc^3 \I(\tOmega_\varepsilon^{\tau_-}\cup\tOmega_{\tau_+}^\beta)^{3/2} \I(\tOmega_{\tau_-}^{\tau_+})^{1/2},
    \end{align}
    where we have again used \Cref{cor:gradtildephi}. By the elementary inequality $m^{3/2}n^{1/2} \leq \frac{3^{3/2}}{16}(m+n)^2$,
    \begin{align}
        |\tSigma_{\tau_0}|
        \leq \frac{27C_\eta^3}{16 I_0} \max\lc\frac{\bun{K}(t_-;b)}{\tau_--\varepsilon},\frac{\bar{K}(t_+)}{\beta-\tau_+}\rc^3 \I(\tOmega_\varepsilon^\beta)^2.
    \end{align}
    The result follows by taking the limit $\varepsilon\to0$ via Fatou's lemma to obtain
    \begin{align}\label{def:constant}
        |\tSigma_{\tau_0}|
        \leq \underbrace{\frac{27C_\eta^3}{16 I_0} \max\lc \frac{1}{I_+},\frac{1}{I_-} \rc^3}_{\text{\normalsize$\equiv \mathbf{C}(b,\eta,t_\pm)$}} \I(\tOmega_0^\beta)^2.
    \end{align}
\end{proof}

\begin{remark}\label{remark:parameters}
    It is clear from the definition of $C_\eta$ that $\mathbf{C}$ is minimized in the limit $\eta\to0$.
    \begin{align}
        \mathbf{C}(b,0,t_\pm) = \frac{1}{2\omega_3I_0} \max\lc\frac{1}{I_+},\frac{1}{I_-}\rc^3
    \end{align}
   Then unpack $I_0,I_\pm$ using the definition of $H$ to obtain
    \begin{align}\label{eqn:lengths}
        I_0 = \int_{t_-}^{t_+} \frac{dt}{\gamma(t)}, \qquad
        I_+ = \int_{t_+}^b \frac{dt}{\bar{K}(t)\gamma(t)}, \qquad
        I_- = \int_0^{t_-} \frac{dt}{\bun{K}(t;b) \gamma(t)}.
    \end{align}
    In \Cref{appendix:constant:gamma}, we observe that $\gamma(t)$ admits an upper bound that is linear in $t$.
    \begin{align}
        \gamma(t) \leq \sqrt{\frac{5}{3}}\lp \frac{3}{10}t + 1 \rp
    \end{align}
    Hence in \Cref{lem:gradphi} and all subsequent definitions involving $\gamma(t)$, we may replace $\gamma(t)$ by this linear function and directly integrate \eqref{eqn:lengths} to obtain
    \begin{equation}
    \begin{aligned}
        I_0 &= 2\sqrt{\frac{5}{3}} \ln\lp\frac{3t_++10}{3t_-+10}\rp, \\
        I_+ &= 4\sqrt{\frac{5}{3}} \lc \frac{C_+}{3}\ln\lp\frac{3b+10}{3t_++10}\rp + D_+ \ln\lp\frac{b+\sqrt{2}}{t_++\sqrt{2}}\rp + \frac{E_+(b-t_+)}{(b+\sqrt{2})(t_++\sqrt{2})} \rc, \\
        I_- &= 4\sqrt{15} \lp1-\frac{b}{\sqrt{3}}\rp^2 \lc \frac{C_-}{3} \ln\lp\frac{3t_-}{10}+1\rp + D_- \ln\lp\frac{t_-}{\sqrt{2}}+1\rp + \frac{E_-t_-}{\sqrt{2}(t_-+\sqrt{2})} \rc,
    \end{aligned}
    \end{equation}
    where $C_\pm,D_\pm,E_\pm$ are explicit constants given in \Cref{appendix:constant:lengths}. From here, we use Mathematica to approximate the optimal parameter values.
    \begin{align}
        b \approx 0.62067668, \quad
        t_+ \approx 0.34601268, \quad
        t_- \approx 0.20277676
        \implies 
        \mathbf{C} \approx 7006.35698657
    \end{align}
\end{remark}

\section{Proof of the main results}
\begin{proof}[Proof of \Cref{thm:main_AF,thm:main_AH}]
    Let $(M,g,k,\E)$ be a charged initial data set which is either asymptotically flat or asymptotically hyperboloidal. Fix an end $\Mend\subset M$ with energy $E=E_\text{ADM}$ or $E_\text{hyp}$. By \Cref{lem:generalizedexteriorregion}, \Cref{prop:jang}, and \Cref{prop:jangelectricfield}, there is a distinguished outermost apparent horizon $\Sigma$ with respect to $\Mend$ and a charged Jang surface $(\Mj,\gj,\Ej)$ satisfying the properties in \Cref{table:jang}. By \Cref{prop:masschargeangularmomentum}, there exist asymptotically linear charged harmonic functions $u^1,u^2,u^3$ on $\End_0\subset\Mj$ such that
    \begin{equation}\label{thm:main:eqn1}
        \begin{aligned}
            E-\frac{7}{10}|\Q| - \frac{\J^2}{36\mathcal{R}_\eta(S_{R_*})^3} \geq \frac{1}{80\pi} \sum_{i=1}^3 \int_{\Mj} \frac{|\gradc^2 u^i|^2}{|\gradj u^i|}+|\gradj u^i||\Ej|^2 \, dV_{\gj},
        \end{aligned}
    \end{equation}
    where we have used in the asymptotically hyperboloidal case that $\jang E=2\ax{E}\leq2E$ (\Cref{lem:axisymmetriccoordinates}) and in both cases the fact that $\Qj=\Q$ (\Cref{prop:jangelectricfield}). Let $\Phi$ be the singular-value defect for $u^i$ defined by the formula \eqref{def:phi}. Since $(u^1,u^2,u^3)$ forms an asymptotically flat coordinate chart on $\End_0$, the sublevel set $\{0\leq\Phi<t\}$ contains a possibly smaller asymptotically flat end within $\End_0$ for any $t>0$. On the other hand, $\Phi\to \sqrt{3}$ along each cylindrical end of $\Mj$ by \Cref{lem:chargedharmonic} (3). Hence for any regular value $0<t<\sqrt{3}$, the surface $\Phi^{-1}(t)$ separates the cylindrical ends of $\Mj$ from an asymptotically flat end contained in $\End_0$. Moreover, the area of such a surface satisfies $|\Phi^{-1}(t)|\geq\A_\text{min}(\Sigma)$ \Cref{prop:jang} (2). By the results of \Cref{sec:area}, there is a regular value $t_0\in(0,\sqrt{2/3})$ and an explicit constant $\mathbf{C}$ such that
    \begin{align}\label{thm:main:eqn2}
        \A
        \leq |\Phi^{-1}(t_0)|
        &\leq \mathbf{C} \lp \sum_{i=1}^3 \int_{\Mj} \frac{|\gradc^2 u^i|^2}{|\gradj u^i|} + |\gradj u^i||\Ej|^2 \, dV_{\gj} \rp^2 && \text{(Thm.~\ref{thm:area})} \\
        &\leq \mathbf{C} (80\pi)^2 \lp E-\frac{7}{10}|\Q| - \frac{\J^2}{36\mathcal{R}_\eta(S_{R_*})^3} \rp^2 && \text{(Eqn.~\ref{thm:main:eqn1})}.
    \end{align}
    By \Cref{remark:parameters}, there exist values of $b,\eta,t_\pm$ such that
    \begin{align}
        \frac{7}{10}\geq\frac{1}{40\sqrt{\pi\mathbf{C}}}, \qquad
        \frac{1}{36}\geq\frac{2}{5\sqrt{\mathbf{C}}}.
    \end{align}
    Hence we may take the square root of \eqref{thm:main:eqn2} and rearrange to obtain
    \begin{align}\label{thm:main:eqn3}
        E
        \geq \frac{1}{20\sqrt{\pi\mathbf{C}}}\lp\sqrt{\frac{\A}{16\pi}} + \frac{|\Q|}{2} + \frac{\sqrt{16\pi}\J^2}{2\mathcal{R}_\eta(S_{R_*})^3}\rp.
    \end{align}
    Squaring the expression in the parentheses and strategically omitting terms yields
    \begin{align}
        \lp\sqrt{\frac{\A}{16\pi}} + \frac{|\Q|}{2} + \frac{\sqrt{16\pi}\J^2}{2\mathcal{R}_\eta(S_{R_*})^3}\rp^2
        \geq \lp\sqrt{\frac{\A}{16\pi}} + \frac{|\Q|}{2}\rp^2 + \frac{\J^2\sqrt{\A}}{\mathcal{R}_\eta(S_{R_*})^3}.
    \end{align}
    As discussed in \Cref{sec:intro}, the Penrose inequality is distinct from the positive mass inequality when $\A\geq4\pi\sqrt{\Q^4+4\J^2}$. In particular, this implies $1\geq 2|\Q|\sqrt{\frac{\pi}{\A}}$. Hence
    \begin{align}
        E\geq\frac{1}{20\sqrt{\pi\mathbf{C}}}\ls\lp\sqrt{\frac{\A}{16\pi}} + \Q^2\sqrt{\frac{\pi}{\A}}\rp^2 + \frac{\J^2\sqrt{\A}}{\mathcal{R}_\eta(S_{R_*})^3} \rs.
    \end{align}
    The proof of the inequality is completed by observing that any asymptotically flat initial data sets satisfying \eqref{cond:AF_asymptotics} has trivial linear momentum. The stated value for $\C$ follows from \Cref{remark:parameters}.
\end{proof}

\appendix
\section{Elementary estimate}

\begin{lemma}\label{lem:tracelessestimate}
    Let $T$ be a symmetric, trace-free 2-tensor. Then for any unit vector $e$,
    \begin{align}
        |T(e,\cdot)|^2 \leq \frac{2}{3}|T|^2.
    \end{align}
\end{lemma}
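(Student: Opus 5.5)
The plan is to diagonalize $T$ in an orthonormal frame and reduce the claim to a finite-dimensional inequality about eigenvalues. Since $T$ is symmetric, there is an orthonormal basis $\{f_1,f_2,f_3\}$ in which $T=\mathrm{diag}(\lambda_1,\lambda_2,\lambda_3)$ with $\lambda_1+\lambda_2+\lambda_3=0$. Writing $e=\sum_j c_j f_j$ with $\sum_j c_j^2=1$, we have $T(e,\cdot)=\sum_j c_j\lambda_j f_j^\flat$, and hence
\begin{align*}
    |T(e,\cdot)|^2=\sum_{j=1}^3 c_j^2\lambda_j^2\leq \max_j \lambda_j^2 .
\end{align*}
So it suffices to show $\lambda_j^2\leq \frac{2}{3}\sum_{k}\lambda_k^2$ for each $j$.

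Fix $j=1$ without loss of generality. The trace-free condition gives $\lambda_1=-(\lambda_2+\lambda_3)$. By Cauchy--Schwarz,
\begin{align*}
    \lambda_1^2=(\lambda_2+\lambda_3)^2\leq 2(\lambda_2^2+\lambda_3^2).
\end{align*}
Therefore
\begin{align*}
    |T|^2=\lambda_1^2+\lambda_2^2+\lambda_3^2\geq \lambda_1^2+\tfrac12\lambda_1^2=\tfrac32\lambda_1^2,
\end{align*}
which is exactly $\lambda_1^2\leq\frac23|T|^2$. Combining this with the first display yields the lemma.

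No step is a real obstacle. The only point needing care is confirming that $|T(e,\cdot)|^2$ is a convex combination of the $\lambda_j^2$ with weights $c_j^2$, which is immediate in the eigenbasis. For the equality case, the bound is sharp when $\lambda_2=\lambda_3=-\lambda_1/2$ and $e=f_1$; no uniqueness statement is needed, though.
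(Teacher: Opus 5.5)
Your proof is correct. It uses the same key inequality as the paper: trace-freeness gives $\lambda_1^2=(\lambda_2+\lambda_3)^2\leq 2(\lambda_2^2+\lambda_3^2)$. The difference is the frame in which you apply it.

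You diagonalize $T$ and observe that $|T(e,\cdot)|^2=\sum_j c_j^2\lambda_j^2$ is a convex combination of the $\lambda_j^2$. This reduces the lemma to the case where $e$ is an eigenvector.

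The paper never diagonalizes. It extends $e$ to an orthonormal frame with $e_1=e$ and writes $T=(a_{ij})$. The off-diagonal entries $a_{12},a_{13}$ appear once in $|T(e,\cdot)|^2$ but twice in $|T|^2$, so they are absorbed directly. Only the diagonal entry $a_{11}$ needs the trace-free bound $a_{11}^2\leq 2(a_{22}^2+a_{33}^2)$.

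What each route buys:
\begin{itemize}
\item The paper's version avoids the spectral theorem and is a one-line computation in any frame adapted to $e$.
\item Your version shows the estimate depends only on the eigenvalues of $T$. It also makes the sharp case transparent: $e$ an eigenvector with $\lambda_2=\lambda_3=-\lambda_1/2$.
\end{itemize}
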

\begin{proof}
    Extend $e$ to an orthonormal basis with $e_1=e$ and express $T$ in this basis as $T=(a_{ij})$. We compute
    \begin{align}
        |T(e,\cdot)|^2
        = a_{11}^2 + a_{12}^2 + a_{13}^2, \qquad
        |T|^2 = a_{11}^2 + a_{22}^2 + a_{33}^2 + 2(a_{12}^2 + a_{13}^2 + a_{23}^2).
    \end{align}
    The trace-free assumption implies
    \begin{align}
        a_{11}^2
        = (a_{22} + a_{33})^2
        \leq 2 (a_{22}^2 + a_{33}^2),
    \end{align}
    from which it follows
    \begin{align}
        |T|^2 
        \geq \frac{3}{2} a_{11}^2 + 2(a_{12}^2 + a_{13}^2 + a_{23}^2)
        \geq \frac{3}{2}|T(e,\cdot)|^2.
    \end{align}
\end{proof}

\section{Properties of the defect}
We establish some basic properties for $\Phi$ on the set where $\Phi<1$.

\begin{lemma}\label{lem:defectproperties}
    For every $b\in(0,1)$:
    \begin{enumerate}
        \item $\Phi$ is well-defined on $\{\Phi<1\}$ and smooth on $\{0<\Phi<1\}$.
        \item The restriction of $\Phi$ to $\{0<\Phi< b\}$ is proper.
        \item The regular values of $\Phi$ have full measure in $(0,b)$.
        \item If $0<t_1<t_2<b$ are regular values, then $\del\Omega_{t_1}^{t_2} = \Sigma_{t_1} \cup \Sigma_{t_2}$.
        \item If $\Phi(x)<1$, then $U=(u^1,u^2,u^3)$ is a local diffeomorphism near $x$.
    \end{enumerate}
\end{lemma}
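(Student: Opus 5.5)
The argument is a perturbation from the identity matrix. Everything rests on one eigenvalue bound. Let $\mu_1,\mu_2,\mu_3\ge 0$ be the eigenvalues of $\G$, so that $\sigma_i=\sqrt{\mu_i}$. Whenever $\G$ is positive semidefinite and $\Phi<1$, each term satisfies $(\sigma_i-1)^2\le\Phi^2<1$, and hence $\sigma_i\in(0,2)$. In particular every $\sigma_i$ is strictly positive.

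\textbf{Items (1) and (5).} $\G$ is always a Gram matrix, hence symmetric positive semidefinite, so $\G^{1/2}$ and $\Phi$ are defined everywhere on $\Mj$. On $\{\Phi<1\}$ all $\sigma_i>0$, so $\G$ is positive definite.
\begin{itemize}
\item Positive definiteness gives $\det\G=J(U)^2>0$, so $dU$ has full rank and $U$ is a local diffeomorphism by the inverse function theorem. This proves (5).
\item The square root map is real-analytic on the open cone of positive definite matrices (holomorphic functional calculus, or the implicit function theorem for $A\mapsto A^2$). Since $\G$ is smooth because the $u^i$ are, the matrix $\G^{1/2}$ is smooth on $\{\Phi<1\}$.
\item Consequently $\Phi^2=\tr\G-2\tr\G^{1/2}+3$ is smooth there, and $\Phi=\sqrt{\Phi^2}$ is smooth wherever $\Phi>0$.
\end{itemize}
This proves (1).

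\textbf{Item (2), properness.} Fix a compact $K\subset(0,b)$, say $K\subset[\delta,b']$ with $0<\delta\le b'<b<1$, and show that $\Phi^{-1}(K)$ is compact. The sequence $\Phi^{-1}(K)\cap\{\text{bounded region}\}$ is closed, so it suffices to show $\Phi^{-1}(K)$ avoids neighborhoods of all ends.
\begin{itemize}
\item On $\End_0$ the $u^i$ are asymptotic to $x^i$ with $du^i-dx^i\to0$ and $\gj\to\delta$, so $\G\to I$ and $\Phi\to0$. Hence $\Phi<\delta$ outside a compact set of $\End_0$.
\item On each cylindrical end $\End_l$, \Cref{lem:chargedharmonic}(3) gives $\gradj u^i\to0$, so $\G\to0$ and $\Phi\to\sqrt3>b$ for $|t|$ large.
\end{itemize}
Thus $\Phi^{-1}(K)$ is a closed subset of a compact core region of $\Mj$, where $\Phi$ is continuous because the square root is continuous on positive semidefinite matrices. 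It is therefore compact.

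\textbf{Items (3) and (4).} For (3), $\Phi$ is smooth on the 3-manifold $\{0<\Phi<1\}$, so Sard's theorem says its critical values in $(0,b)$ have measure zero.

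For (4), let $t_1<t_2$ be regular values. By properness, $\Omega_{t_1}^{t_2}$ is a compact subset of $\{0<\Phi<1\}$. Near a point with $t_1<\Phi<t_2$ the set is open, so such a point is interior. Near a point of $\Sigma_{t_i}$ we have $d\Phi\neq0$, so locally $\Omega_{t_1}^{t_2}$ is a half-space bounded by the smooth surface $\Sigma_{t_i}$. Hence $\del\Omega_{t_1}^{t_2}=\Sigma_{t_1}\cup\Sigma_{t_2}$, with both pieces compact smooth surfaces.

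\textbf{Main obstacle.} The only delicate points are the smoothness of $\G^{1/2}$ and the end asymptotics needed for properness. For the square root, one can write it explicitly as $\G^{1/2}=\frac{1}{2\pi i}\oint z^{1/2}(z-\G)^{-1}\,dz$ along a contour in the right half-plane enclosing the spectrum. For the ends, on $\End_0$ the decay $du^i=dx^i+O(r^{-\qj})$ from \eqref{cond:cappedchargedharmonic}, combined with the metric asymptotics, gives $\G=I+O(r^{-\qj})$ and hence $\Phi\to0$.
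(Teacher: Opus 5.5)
Your proof is correct and, for (1), follows the same argument as the paper: $|\sigma_i-1|\le\Phi<1$ forces every $\sigma_i>0$, so $\mathcal{G}$ is positive definite, and smoothness then follows from the smoothness of the matrix square root on positive definite matrices and of the Frobenius norm away from the zero matrix. For (2)--(5) the paper simply cites Lemmas A.1 and A.2 of \cite{allen2025proofspacetimepenroseinequality} (stated there for $F=\|\mathcal{G}-I\|_F^2$), whereas you give direct arguments, all of which are sound and apply to $\Phi$ directly: properness from $\Phi\to0$ on $\End_0$ and $\Phi\to\sqrt{3}$ on the cylindrical ends, Sard's theorem, the implicit function theorem at regular values, and the inverse function theorem via $\det\mathcal{G}>0$.
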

\begin{proof}
    The definition of $\Phi$ implies the singular values of $dU$ satisfy
    \begin{align}
        |\sigma_i - 1| < 1 \quad\text{on}\quad\{\Phi<1\},
    \end{align}
    so they in particular must be positive. Since $\G= (dU)(dU)^T$, it follows that $\G$ is positive definite on $\{\Phi<1\}$. For smoothness, recall that the map $A\mapsto A^{1/2}$ is smooth on the set of positive definite symmetric matrices and that the Frobenius norm is smooth away from the zero matrix. The other properties follow from \cite[Lemma A.1 \& A.2]{allen2025proofspacetimepenroseinequality}.
\end{proof}

Let $J(U) = \sqrt{\det{\G}}$ be the Jacobian of $U$ and denote by $J(U\vert_\Sigma)$ the Jacobian of the restricted map $U\vert_\Sigma$ to any surface $\Sigma\subset\Mj$. We obtain the following bounds for $\J$ on the set $\{0<\Phi<\sqrt{2/3}\}$.

\begin{lemma}\label{lem:jacobian}
    Let $0<a<t<\sqrt{\frac{2}{3}}$ be regular values of $\Phi$. Then
    \begin{align}
        J(U)&\geq q(t) = \lp 1-\tfrac{t}{\sqrt{3}} \rp^3\quad\text{on}\quad\Omega_a^t, \\
        J(U\vert_{\del\Omega_a^t})&\leq p(t) = \lp1+\tfrac{t}{\sqrt{2}}\rp^2\quad\text{on}\quad\del\Omega_a^t.
    \end{align}
\end{lemma}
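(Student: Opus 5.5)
The plan is to reduce both bounds to elementary extremal problems for the singular values $\sigma_1,\sigma_2,\sigma_3$ of $dU$. On $\Omega_a^t$ we have $\Phi\le t$. By \Cref{lem:defectproperties}~(4), $\del\Omega_a^t=\Sigma_a\cup\Sigma_t$, so also $\Phi\le t$ on $\del\Omega_a^t$. Writing $\sigma_i=1+x_i$, the constraint becomes $\sum_i x_i^2\le t^2<\tfrac23$, so in particular $|x_i|<1$ and all $\sigma_i>0$. Since $J(U)=\sqrt{\det\G}=\sigma_1\sigma_2\sigma_3$, the first inequality amounts to minimizing $\prod_i(1+x_i)$ over the ball $\{\sum x_i^2\le t^2\}$. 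For the second, the restriction $dU\vert_{T\Sigma}$ has Jacobian at most the product of the two largest singular values of $dU$, by the min--max characterization or Cauchy--Binet.

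For the upper bound, take the two largest singular values $\sigma_1,\sigma_2$. Then AM--GM and Cauchy--Schwarz give
\begin{align*}
    J(U\vert_{\del\Omega_a^t}) \le \sigma_1\sigma_2 \le \lp\frac{\sigma_1+\sigma_2}{2}\rp^2 = \lp 1+\frac{x_1+x_2}{2}\rp^2 \le \lp 1+\frac{\sqrt{2}\sqrt{x_1^2+x_2^2}}{2}\rp^2 \le \lp1+\frac{t}{\sqrt2}\rp^2 = p(t).
\end{align*}
This step is routine.

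The lower bound is where I expect the real work, and it is where the threshold $\sqrt{2/3}$ should enter. The function $P(x)=\prod_i(1+x_i)$ has nonvanishing gradient on the ball, since all factors are positive. Hence its minimum over the compact ball is attained on the sphere $\sum x_i^2=t^2$, at a Lagrange critical point of $\sum\log(1+x_i)$. The Lagrange condition reads $1/(1+x_i)=2\lambda x_i$, i.e. $x_i(1+x_i)$ is the same constant for every $i$. Consequently any two distinct values $x_i\ne x_j$ must satisfy $x_i+x_j=-1$, as they are both roots of the same quadratic.

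I would rule out the non-symmetric case by showing it is incompatible with the constraint. If the coordinates take two values $x,y$ with $x+y=-1$, then $\sum x_i^2\ge\min\{2x^2+(1+x)^2,\;x^2+2(1+x)^2\}$, and each of these quadratics has minimum value $\tfrac23$, attained at $x=-\tfrac13$ and $x=-\tfrac23$ respectively. Since $t^2<\tfrac23$, this case cannot occur on the sphere. Hence all $x_i$ are equal, $x_i=\pm t/\sqrt3$, and the minimum of $P$ is $(1-t/\sqrt3)^3=q(t)$. Finally, $q$ is decreasing in $t$, so the bound holds pointwise wherever $\Phi\le t$, which gives $J(U)\ge q(t)$ on all of $\Omega_a^t$.
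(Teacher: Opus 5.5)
Your proposal is correct and follows essentially the same route as the paper: you reduce to the singular values, rule out interior minima, apply Lagrange multipliers on the sphere $\sum_i(\sigma_i-1)^2=t^2$, exclude the non-diagonal critical points using $t^2<\tfrac23$, and bound $J(U\vert_\Sigma)\le\sigma_1\sigma_2\le(1+t/\sqrt2)^2$ by AM--GM and Cauchy--Schwarz. Your condition that distinct $x_i$ sum to $-1$ is exactly the paper's off-diagonal family $\sigma_1=\sigma_2=-2\lambda$, $\sigma_3=2\lambda+1$, and your observation that $\nabla P$ has all components positive (since $|x_i|\le t<1$) is a slightly cleaner form of the paper's argument that an interior critical point would force two singular values to vanish.
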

\begin{proof}
    The Jacobian of $U$ is given by $J(U)=\sigma_1\sigma_2\sigma_3$ where $\sigma_i$ are the singular values of $dU$. Since
    \begin{align}
        \Phi^2 = \sum_{i=1}^3 (\sigma_i-1)^2 \leq t^2,
    \end{align}
    on the set $\Omega_a^t$, we may reformulate the problem as minimizing the function
    \begin{align}
        P:\R^3\to\R, \qquad P(\sigma)=\sigma_1\sigma_2\sigma_3,
    \end{align}
    over the closed ball $D_t\equiv\overline{B_t((1,1,1))}\subset\R^3$. We claim that the minimum is achieved on the boundary sphere. Otherwise, $P$ satisfies at any interior minimum
    \begin{align}
        \grad P = (\sigma_2\sigma_3,\sigma_1\sigma_3,\sigma_1\sigma_2)=0,
    \end{align}
    so two or more singular values must vanish. But then $\sum_{i=1}^3 (\sigma_i-1)^2 \geq 2$, which is impossible for elements of $D_t$ since $t<\sqrt{2/3}$. Hence any minimizer of $P$ satisfies
    \begin{align}\label{lem:jacobian:lagrangeconstraint}
        \sum_{i=1}^3 (\sigma_i-1)^2 = t^2.
    \end{align}
    By the method of Lagrange multipliers, each minimum of $P$ subject to the constraint \eqref{lem:jacobian:lagrangeconstraint} satisfies 
    \begin{align}
        \begin{gathered}\label{lem:jacobian:lagrangeequations}
            \sigma_2\sigma_3 = 2\lambda(\sigma_1-1) \\
            \sigma_1\sigma_3 = 2\lambda(\sigma_2-1) \\
            \sigma_1\sigma_2 = 2\lambda(\sigma_3-1)
        \end{gathered}
    \end{align}
    for some constant $\lambda$. Subtracting the equations pairwise yields
    \begin{align}\label{lem:jacobian:differenceequations}
        (\sigma_i-\sigma_j)(\sigma_k+2\lambda) = 0,
    \end{align}
    for any $i,j,k$ distinct. Evaluating the constraint equation for $\sigma_1=\sigma_2=\sigma_3$ shows that there are two extremizers of $P$ lying on the diagonal, $\sigma_i^\pm = 1 \pm \frac{t}{\sqrt{3}}$. It is clear from definition that
    \begin{align}
        P(\sigma^+)\geq P(\sigma^-) = \lp 1 - \frac{t}{\sqrt{3}} \rp^3.
    \end{align}
    
    It remains to consider the case when $\sigma$ does not lie on the diagonal. The difference equations \eqref{lem:jacobian:differenceequations} imply the solution must be of the form $\sigma_1=\sigma_2=-2\lambda$, $\sigma_3\neq -2\lambda$. Substituting into \eqref{lem:jacobian:lagrangeequations} gives
    \begin{align}
        4\lambda^2 = 2\lambda(\sigma_3-1),
    \end{align}
    which implies $\sigma_3=2\lambda+1$. The constraint equation then becomes
    \begin{align}
        2(2\lambda+1)^2 + 4\lambda^2 = t^2,
    \end{align}
    which has no real solutions for $t<\sqrt{2/3}$. Hence \eqref{lem:jacobian:lagrangeequations} has no off-diagonal solutions when subject to \eqref{lem:jacobian:lagrangeconstraint} for $t<\sqrt{2/3}$. This concludes the proof for the lower bound of $J(U)$. For the upper bound, suppose $\sigma_1\geq\sigma_2\geq\sigma_3$. The Jacobian restricted to the tangent plane of any surface $\Sigma$ satisfies
    \begin{equation}
    \begin{aligned}
        J(U\vert_\Sigma)
        \leq \sigma_1\sigma_2
        &= [(\sigma_1-1)+1][(\sigma_2-1)+1] \\
        &= \prod_{i=1}^2 (\sigma_i-1) + \sum_{i=1}^2 (\sigma_i-1) + 1 \\
        &\leq \frac{1}{2} \sum_{i=1}^2 (\sigma_i-1)^2  + \lp 2\sum_{i=1}^3 (\sigma_i-1)^2 \rp^{1/2} + 1 \\
        &\leq \frac{1}{2}\Phi^2 + \Phi\sqrt{2} + 1 \\
        &= \lp \frac{\Phi}{\sqrt{2}} + 1 \rp^2.
    \end{aligned}
    \end{equation}
    The desired upper bound follows since $\Phi\leq t$ on $\del\Omega_a^t=\Sigma_a\cup\Sigma_t$.
\end{proof}

\begin{lemma}\label{lem:gradu}
    The following holds pointwise on $\{\Phi<1\}$.
    \begin{align}
        |\gradj u^i| &\leq \underset{|\rho|\leq\sqrt{3}}{\max} \lp 1 + \frac{\Phi}{3} \ls \rho+\sqrt{2(3-\rho^2)} \rs\rp, \quad i=1,2,3 \\
        \sum_{i=1}^3 |\gradj u^i| &\geq \underset{|\rho|\leq\sqrt{3}}{\min} \lp 3 + \rho\Phi \rp
    \end{align}
\end{lemma}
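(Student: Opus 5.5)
The approach is to reduce both bounds to statements about the singular values of $dU$. Write $dU = V\Sigma W^T$, so that $\G=(dU)(dU)^T = V\Sigma^2V^T$ and $\G^{1/2}=V\Sigma V^T$. Here $|\gradj u^i|^2=\G_{ii}=\sum_k V_{ik}^2\sigma_k^2$. Set $s_k=\sigma_k-1$, so that $\sum_k s_k^2=\Phi^2$. Then introduce the normalized quantity $\rho=\Phi^{-1}\sum_k s_k$ (with $\rho=0$ when $\Phi=0$). By Cauchy–Schwarz, $|\rho|\leq\sqrt3$; this is the $\rho$ referred to in \Cref{lem:gradphi}. Both inequalities then become elementary optimization problems over $(s_1,s_2,s_3)$ on the sphere of radius $\Phi$ with prescribed sum $\rho\Phi$.

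For the upper bound on a single $|\gradj u^i|$, note that the row $V_{i\cdot}$ is a unit vector, so $|\gradj u^i|\leq\max_k\sigma_k=1+\max_k s_k$. The task is therefore to maximize $\max_k s_k$ given $\sum s_k=\rho\Phi$ and $\sum s_k^2=\Phi^2$. If the largest entry is $s_1$, the other two have sum $\rho\Phi-s_1$ and sum of squares $\Phi^2-s_1^2$. The constraint $(\rho\Phi-s_1)^2\leq 2(\Phi^2-s_1^2)$ is a quadratic inequality in $s_1$, and it gives
\[
s_1\leq\tfrac{\Phi}{3}\Big(\rho+\sqrt{2(3-\rho^2)}\Big).
\]
Taking the maximum over $|\rho|\leq\sqrt3$ yields the first bound. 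On $\{\Phi<1\}$ all $\sigma_k>0$ (\Cref{lem:defectproperties}), so no sign issues arise.

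For the lower bound on $\sum_i|\gradj u^i|$, I would compare $|\gradj u^i|=(\sum_k V_{ik}^2\sigma_k^2)^{1/2}$ with $(\G^{1/2})_{ii}=\sum_k V_{ik}^2\sigma_k$. By Jensen (concavity of the square root, with weights $V_{ik}^2$ summing to $1$), $|\gradj u^i|\geq(\G^{1/2})_{ii}$. Summing over $i$ gives
\[
\sum_i|\gradj u^i|\geq\tr\G^{1/2}=\sum_k\sigma_k=3+\rho\Phi,
\]
and minimizing over $|\rho|\leq\sqrt3$ gives the stated bound.

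The only delicate point is the quadratic constraint in the first bound: one must justify that it is both necessary and sharp (equality when $s_2=s_3$), and track the sign of the discriminant $2(3-\rho^2)\geq0$, which is exactly $|\rho|\leq\sqrt3$. The second step is just the eigen-decomposition plus Jensen, so I expect no obstacle there.
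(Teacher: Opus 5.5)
Your proposal is correct and follows essentially the same route as the paper. You bound $|\gradj u^i|=\sqrt{\G_{ii}}$ by the largest singular value, control $\sigma_1-1$ via the quadratic inequality from $(\xi_2+\xi_3)^2\leq 2(\xi_2^2+\xi_3^2)$, and for the lower bound compare $|\gradj u^i|$ with $(\G^{1/2})_{ii}$ and sum to get $\tr\G^{1/2}=3+\rho\Phi$; the paper uses Cauchy--Schwarz where you use the equivalent Jensen step, and only the necessity of the quadratic constraint, not its sharpness, is needed.
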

\begin{proof}
    On $\{\Phi<1\}$, the Gram matrix $\G$ admits a square root $\G^{1/2}$ with positive eigenvalues $\sigma_1\geq\sigma_2\geq\sigma_3>0$. Let $\G^{1/2}=Q\,\text{diag}(\sigma_1,\sigma_2,\sigma_3)\,Q^T$ be a diagonalization with $Q$ orthogonal. Observe
    \begin{align}\label{lem:gradu:eqn1}
        |\gradj u^i| 
        = \sqrt{\G_{ii}}
        = \lp\sum_{j=1}^3 Q_{ij}^2 \sigma_j^2\rp^{1/2}
        \leq \max_j \sigma_j \lp\sum_{j=1}^3 Q_{ij}^2\rp^{1/2} 
        = \sigma_1.
    \end{align}
    Let $S \equiv \sum_{i=1}^3 \xi_i \equiv \sum_{i=1}^3 (\sigma_i-1)$ and define $\rho$ by the relation
    \begin{align}
        S=\rho\Phi \iff
        \sum_{i=1}^3 (\sigma_i-1) = \rho \lp \sum_{i=1}^3 (\sigma_i-1)^2 \rp^{1/2}.
    \end{align}
    Observe by Cauchy-Schwarz that
    \begin{align}
        S^2 \leq 3 \sum_{i=1}^3\xi_i^2 = 3\Phi^2,
    \end{align}
    so $\rho$ must lie in the interval $-\sqrt{3}\leq\rho\leq\sqrt{3}$. By Young's inequality,
    \begin{align}
        \Phi^2 - \xi_1^2
        = \xi_2^2 + \xi_3^2
        \geq \frac{1}{2}(\xi_2+\xi_3)^2
        = \frac{1}{2}(S-\xi_1)^2.
    \end{align}
    Solving the quadratic inequality yields
    \begin{align}
        \xi_1 
        \leq \frac{1}{3} \ls S+\sqrt{2(3\Phi^2-S^2)} \rs
        = \frac{\Phi}{3} \ls \rho+\sqrt{2(3-\rho^2)} \rs.
    \end{align}
    Combining with \eqref{lem:gradu:eqn1} and taking the maximum over $\rho$ implies the first inequality in \Cref{lem:gradu}. For the second inequality, use Cauchy-Schwarz to get
    \begin{align}
        \sum_{j=1}^3 Q_{ij}^2 \sigma_j
        \leq \lp \sum_{j=1}^3 Q_{ij}^2 \rp^\frac{1}{2} \lp \sum_{j=1}^3 Q_{ij}^2 \sigma_j^2 \rp^\frac{1}{2}
        = \lp \sum_{j=1}^3 Q_{ij}^2 \sigma_j^2 \rp^\frac{1}{2}
        = |\gradj u^i|.
    \end{align}
    Then summing over $i$ yields
    \begin{align}
        \sum_{i=1}^3 |\gradj u^i|
        \geq \sum_{i=1}^3 \sigma_i
        = 3 + \sum_{i=1}^3 (\sigma_i-1)
        = 3 + S,
    \end{align}
    and the result follows by minimizing over $\rho$.
\end{proof}

\section{Computing the constant}\label{appendix:constant}

\subsection{Upper bound for \texorpdfstring{$\gamma$}{gamma}}\label{appendix:constant:gamma}
Let $\gamma(t)^2=\max_{|\rho|\leq\sqrt{3}}\Gamma_t(\rho)$ be the square of the coefficient obtained in \Cref{lem:gradphi}, where
\begin{align}\label{appendix:constant:gammadef}
    \Gamma_t(\rho) = \frac{2}{3} + \frac{2t}{9} \lp \rho + \sqrt{2(3-\rho^2)} \rp + \frac{(\rho+t)^2}{3+t \rho} .
\end{align}
We claim that
\begin{align}\label{appendix:constant:gammaproperty}
    \Gamma_t(\rho)\geq \Gamma_t(-\rho) \quad\text{for}\quad\rho\geq 0.
\end{align}
Expanding both sides, we may observe that first term in \eqref{appendix:constant:gammadef} remains unchanged while the second term clearly decreases for $-\rho$. It remains to prove
\begin{align}\label{appendix:constant:eqn1}
    \frac{(\rho+t)^2}{3+t\rho} \geq \frac{(t-\rho)^2}{3-t\rho}.
\end{align}
For $0<t\leq 1$ and $0\leq\rho\leq\sqrt{3}$, we obtain
\begin{align}
    (3-t\rho)(\rho + t)^2 -(3+t\rho)(t-\rho)^2
    = 2t\rho\ls6-(\rho^2+t^2)\rs
    \geq 4t\rho
    \geq 0.
\end{align}
Rearranging yields the desired inequality \eqref{appendix:constant:eqn1}. Next, we estimate the middle term in \eqref{appendix:constant:gammadef} using Young's inequality. For any $\varepsilon>0$, 
\begin{align}
    \frac{2\sqrt{2}t}{9}\sqrt{3-\rho^2}
    \leq \frac{2t^2}{81\varepsilon} + \varepsilon(3-\rho^2).
\end{align}
Combined with \eqref{appendix:constant:gammaproperty}, this implies
\begin{align}\label{appendix:constant:eqn2}
    \gamma(t)^2 \leq \max_{0\leq \rho\leq\sqrt{3}} \lp \frac{2}{3} + 3\varepsilon + \frac{2t^2}{81\varepsilon} + \Delta_{t,\varepsilon}(\rho)\rp,
\end{align}
where $\Delta_{t,\varepsilon}(\rho) = -\varepsilon\rho^2 + \frac{2t}{9}\rho + \frac{(\rho+t)^2}{3+t\rho}$. We may obtain a $\rho$-independent estimate for $\gamma$ by choosing $\varepsilon$ so that
\begin{align}\label{appendix:constant:deltacondition}
    \Delta_{t,\varepsilon}(\rho)\leq\Delta_{t,\varepsilon}(\sqrt{3}).
\end{align}
The best constant in \Cref{thm:area} is obtained by maximizing $\varepsilon$ subject to \eqref{appendix:constant:deltacondition}. However, analytically solving the inequality yields a marginal improvement while further complicating the exposition, so we instead take $\varepsilon=\frac{1}{3}$ as a sufficiently close approximation. Let
\begin{align}\label{appendix:constant:eqn3}
    \Delta_t(\rho) 
    = \Delta_{t,\frac{1}{3}}(\rho)
    = -\frac{1}{3}\rho^2 + \frac{2t}{9}\rho + \frac{(\rho+t)^2}{3+t\rho}.
\end{align}
Observe that the first and third terms cancel when evaluated at $t=0$. This motivates the uniform estimate 
\begin{align}
    \Delta_t(\rho)\leq t, \qquad 0\leq\rho\leq\sqrt
3.
\end{align}
Rearranging and multiplying by $9(3+t\rho)$ produces the equivalent inequality
\begin{equation}
    \begin{aligned}\label{appendix:constant:eqn4}
        0 
        &\leq 9(3+t\rho)(t-\Delta_t(\rho)) \\
        &= (3+t\rho)(9t-2t\rho+3\rho^2) - 9(\rho+t)^2 \\
        &= t\ls A(\rho) + tB(\rho) \rs,
    \end{aligned}
\end{equation}
where $A(\rho) = 3\rho^3-24\rho+27$ and $B(\rho) = -2\rho^2+9\rho-9$. Observe that $B(\rho)\geq 0$ on the interval $\frac{3}{2}\leq\rho\leq 3$. Furthermore, since
\begin{align}
    A'(\rho)=9\rho^2-24, \qquad A''(\rho)=18\rho,
\end{align}
we also have
\begin{align}
    \min_{\frac{3}{2}\leq\rho\leq 3} A(\rho) = A\lp\sqrt{\tfrac{8}{3}}\rp > 0.
\end{align}
Thus \eqref{appendix:constant:eqn4} holds for $\frac{3}{2}\leq\rho\leq\sqrt{3}$. On the other hand, if $0\leq\rho\leq\frac{3}{2}$, then
\begin{align}
    A(\rho)+tB(\rho)
    \geq A(\rho)+B(\rho)
    &= 3\rho^3-2\rho^2-15\rho+18,
\end{align}
where we used that $0<t<1$ and $B(\rho)\leq 0$. Again, a calculation shows the expression on the right-hand side is positive on the interval of interest. Returning to \eqref{appendix:constant:eqn2}, we obtain the final estimate
\begin{align}\label{appendix:constant:gammabound}
    \gamma(t)^2
    \leq \frac{2t^2}{27} + t + \frac{5}{3}
    \leq \frac{5}{3}\lp \frac{3}{10}t + 1 \rp^2
    \equiv \Gamma(t).
\end{align}

\subsection{Solving for \texorpdfstring{$I_0,I_\pm$}{the lengths}}\label{appendix:constant:lengths}
We may replace $\gamma(t)$ by $\sqrt{\Gamma(t)}$ in the definition \eqref{def:normalization} of $H(t)$ and then expand the terms in \eqref{def:lengths} to obtain the following expressions
\begin{equation}
\begin{aligned}
    I_0 
    &= 10 \sqrt{\frac{3}{5}} \int_{t_-}^{t_+} \frac{dt}{3t+10} \\
    I_+
    &= 4\sqrt{\frac{5}{3}} \int_{t_+}^b \frac{\lp \sqrt{3}-t\rp^2}{\lp3t+10\rp\lp t+\sqrt{2}\rp^2} \, dt \\
    I_- 
    &= 4\sqrt{15} \lp1-\frac{b}{\sqrt{3}}\rp^2 \int_0^{t_-} \frac{dt}{\lp 3t+10\rp\lp t+\sqrt{2}\rp^2}
\end{aligned}
\end{equation}
Then $I_0$ may be integrated directly to get
\begin{align}
    I_0 
    = 2\sqrt{\frac{5}{3}} \ln\lp\frac{3t_++10}{3t_-+10}\rp.
\end{align}
Apply partial fraction decomposition to $I_\pm$ to find
\begin{align}
    I_+
    &= 4\sqrt{\frac{5}{3}} \ls \frac{C_+}{3}\ln\lp\frac{3b+10}{3t_++10}\rp + D_+ \ln\lp\frac{b+\sqrt{2}}{t_++\sqrt{2}}\rp + \frac{E_+(b-t_+)}{(b+\sqrt{2})(t_++\sqrt{2})} \rs, \\
    I_- 
    &= 4\sqrt{15} \ls1 -\frac{b}{\sqrt{3}}\rp^2 \lp \frac{C_-}{3} \ln\lp\frac{3t_-+10}{10}\rp + D_- \ln\lp\frac{t_-+\sqrt{2}}{\sqrt{2}}\rp + \frac{E_-t_-}{\sqrt{2}(t_-+\sqrt{2})} \rs,
\end{align}
where
\begin{align}
    C_+ = \lp\frac{10+3\sqrt{3}}{10-3\sqrt{2}}\rp^2, \qquad
    E_+ = \frac{5+2\sqrt{6}}{10-3\sqrt{2}}, \qquad
    D_+ = \frac{3-2C_+-10E_+}{10\sqrt{2}}, \\
    C_- = \frac{9}{118-60\sqrt{2}}, \qquad
    E_- = \frac{1}{10-3\sqrt{2}}, \qquad
    D_- = \frac{1-2C_--10E_-}{10\sqrt{2}}.
\end{align}

\printbibliography
\end{document}